\newtheorem{thm}{Theorem}[section]
\newtheorem{lem}[thm]{Lemma}
\newtheorem{prop}[thm]{Proposition}
\newtheorem{cor}[thm]{Corollary}
\newtheorem{conj}[thm]{Conjecture}
\newtheorem{defn}[thm]{Definition}
\theoremstyle{definition}
\newtheorem{exmp}[thm]{Example}
\newtheorem{rmk}[thm]{Remark}
\newtheorem{notation}[thm]{Notation}
\newenvironment{customthm}[1]
  {\innercustomthm}
  {\endinnercustomthm}
\newcommand{\thistheoremname}{}
\newtheorem*{genericthm*}{\thistheoremname}
\newenvironment{namedthm*}[1]
  {\renewcommand{\thistheoremname}{#1}%
   \begin{genericthm*}}
  {\end{genericthm*}}
\DeclareMathOperator{\Hom}{Hom}
\DeclareMathOperator{\End}{End}
\DeclareMathOperator{\Stab}{Stab}
\numberwithin{equation}{section}
\begin{document}

\title{On The Hecke Orbit Conjecture for PEL Type Shimura Varieties}

%    Information for first author
\author{Luciena X. Xiao}
%    Address of record for the research reported here
\address{Department of Mathematics, Caltech, MC 253-37, 1200 East California Boulevard, Pasadena, CA
91125.
}
%    Current address
\curraddr{}
\email{xxiao@caltech.edu}
%    \thanks will become a 1st page footnote.
%\thanks{}

\setcounter{tocdepth}{1}

\date{}

\date{}

\dedicatory{}

\begin{abstract}
The Hecke orbit conjecture asserts that every prime-to-$p$ Hecke orbit in a Shimura variety is dense in the central leaf containing it. In this paper, we prove the conjecture for certain irreducible components of Newton strata in Shimura varieties of PEL type A and C, when $p$ is an unramified prime of good reduction. Our approach generalizes Chai and Oort's method for Siegel modular varieties. 
\end{abstract}

\maketitle

\tableofcontents

\section{Introduction} 
\subsection{Motivation and history.}

The Hecke orbit conjecture predicts that Hecke symmetries characterize the central foliation on Shimura varieties over an algebraically closed field $k$ of characteristic $p$. The conjecture predicts that on the mod $p$ reduction of a Shimura variety, any prime-to-$p$ Hecke orbit is dense in the central leaf containing it. In the 90s, Oort came up with the idea of studying the locus defined by fixing a geometric isomorphism type of Barsotti-Tate groups. Such a locus is called an Oort's central leaf. By definition, a central leaf is stable under prime-to-$p$ Hecke correspondences and naturally contains the prime-to-$p$ Hecke orbit of any point in it. On the other hand, an isogeny leaf is an orbit by geometric $p$-isogenies. Oort showed that central leaves and isogeny leaves almost give a product structure on an irreducible component of a Newton polygon stratum. Moreover, central leaves define a partition of its ambient Newton stratum by (possibly infinitely many) locally-closed, smooth subvarieties. Within any given Newton stratum, all central leaves have the same dimension and are related to each other via finite isogeny correspondences. In this sense, central leaves form a ``foliation" on a given Newton stratum. So do isogeny leaves. Therefore, isogeny leaves and central leaves lie transversely to each other and characterize the geometry of the Newton stratum they are in.

Given any closed geometric point on a Shimura variety, its orbit under prime-to-$p$ Hecke correspondences naturally sits inside the central leaf passing through that point. The Hecke orbit conjecture draws a parallel between central leaves and isogeny leaves: just as each isogeny leaf coincides with a fixed geometric $p$-isogeny class, central leaves should also almost coincide with a fixed prime-to-$p$ isogeny class. In \cite[Problem 18]{MR1812812}, Chai and Oort predicted that prime-to-$p$ Hecke orbits on Siegel modular varieties are as large as possible. That is to say, every prime-to-$p$ Hecke orbit is Zariski dense in the central leaf containing it. This also means that central leaves are minimal subvarieties stable under all prime-to-$p$ Hecke correspondences. Here we state the more general version of the conjecture for PEL type Shimura varieties.

\begin{conj}\cite[Conjecture 3.2]{chai2006hecke}\label{conj}
Every prime-to-$p$ Hecke orbit on a PEL type Shimura variety over $k$ is dense in the central leaf containing it.
\end{conj}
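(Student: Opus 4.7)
The plan is to adapt the Chai--Oort strategy for Siegel modular varieties to the PEL type A and C settings, combining a \emph{local rigidity} argument near a hypersymmetric point with a \emph{global monodromy} argument that controls how Hecke orbits spread across irreducible components. Let $x$ be a closed geometric point, let $C(x)$ be the central leaf through $x$, and let $Z$ denote the Zariski closure inside $C(x)$ of the prime-to-$p$ Hecke orbit $\mathcal{H}^{(p)}(x)$. Since $Z$ is Hecke-stable and $C(x)$ is smooth of pure dimension, it suffices, on each irreducible component of $C(x)$, to show that $Z$ contains a point at which its formal completion equals the full formal completion of $C(x)$.

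First I would carry out the local analysis. By the general theory of central leaves, every irreducible component of $C(x)$ contains a point $x_0$ whose associated Barsotti--Tate group with additional structure is \emph{hypersymmetric}, i.e.\ whose endomorphism algebra attains the maximum allowed by the Newton isocrystal. For such $x_0$, the work of Chai on Serre--Tate type local coordinates provides a description of the formal completion $C(x)^{/x_0}$ as an iterated extension (``cascade'') of formal $p$-divisible groups built from the slope filtration. The formal completion $Z^{/x_0}$ is then a closed formal subscheme stable under the action of a large group of local symmetries coming from the unipotent radical of the centralizer of the Newton isocrystal (together with the action of suitable local tori coming from $\End$-preserving automorphisms). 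Applying Chai's rigidity theorems for formal subschemes stable under such group actions, one forces $Z^{/x_0} = C(x)^{/x_0}$, so $Z$ is open in the irreducible component of $C(x)$ containing $x_0$.

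Second, I would handle the global/discrete part: ensure $Z$ meets every irreducible component of $C(x)$ that the theorem targets. The key input is a maximality statement for the $\ell$-adic monodromy of the universal BT-with-structure over $C(x)$ (for $\ell \neq p$). Maximality of this monodromy implies that the image of the prime-to-$p$ Hecke group acts transitively on the set of irreducible components of $C(x)$, so the Hecke orbit meets each one. Combined with the local rigidity step, this yields $Z = C(x)$ on each targeted component. The almost-product structure of Oort (central leaves transverse to isogeny leaves inside the Newton stratum) is what lets one isolate the ``central leaf direction'' throughout this argument and is also what forces the restriction to \emph{certain} irreducible components where the structure is well-behaved.

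The main obstacle, and the reason for the restriction to specific components in PEL A and C, will be the proof of maximal $\ell$-adic monodromy for central leaves outside the Siegel case. In PEL type A the unitary constraints reduce the symmetry available at a hypersymmetric point, so one must identify precisely which Newton polygons admit a cascade with a unipotent symmetry group large enough to trigger rigidity; a parallel difficulty arises in verifying that the centralizer's action on the cascade does not preserve a proper formal subscheme of positive codimension. A secondary but nontrivial technical point will be the existence of sufficiently many hypersymmetric points on every irreducible component, which in mixed type requires an independent density/deformation argument before the local rigidity machine can be applied.
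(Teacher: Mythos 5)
The statement you are proving is Conjecture \ref{conj}, which the paper does \emph{not} prove: the paper establishes it only for certain irreducible components of Newton strata in types A and C, under the hypotheses that the stratum contains a $B$-hypersymmetric point and that $p$ behaves well in $F/F_0$ (split with condition (*), or inert). Your proposal founders exactly at the points where those hypotheses are needed. The first gap is your claim that ``every irreducible component of $C(x)$ contains a hypersymmetric point.'' For PEL type this is false: by Zong's theorem (Theorem \ref{hs}), a Newton stratum contains a $B$-hypersymmetric point if and only if its Newton polygon is $B$-symmetric, and Example \ref{EmptyExample} exhibits a $\mu$-ordinary stratum with no $B$-hypersymmetric point at all. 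This scarcity is one of the two main obstructions to running the Chai--Oort argument in the PEL setting, and it is why hypothesis (1) of the main theorem exists; no ``density/deformation argument'' can manufacture such points when the Newton polygon is not $B$-symmetric. The second gap is that your local rigidity step needs the hypersymmetric point $x_0$ to lie in $Z=\overline{H^p(x)}$, not merely in $C(x)$. Producing a hypersymmetric point inside the closure of the Hecke orbit of an \emph{arbitrary} $x$ is the content of Section \ref{section7}: it uses the Hilbert trick (which fails for general PEL type A), the already-known Hecke orbit conjecture for Hilbert modular varieties, Lemma \ref{Eprime}, and the descent of hypersymmetry from $F$ to $F_0$ (Proposition \ref{inert}), which is where the splitting/inertness conditions on $p$ and condition (*) enter. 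Your proposal contains no mechanism for this reduction.

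Two further points. Your discrete part assumes ``maximality of the $\ell$-adic monodromy of the universal BT over $C(x)$'' as an input and deduces transitivity on components from it; but no such maximality theorem is available here, and the paper's monodromy result (Proposition \ref{thmk}) runs in the opposite logical direction: it \emph{assumes} transitivity of prime-to-$\Sigma$ Hecke correspondences on $\pi_0$ and non-basicness, and concludes connectedness. The transitivity itself is supplied by the almost-product structure (Igusa covers) placing mutually isogenous $B$-hypersymmetric points on each component of $C\cap\mathcal{N}^0$, upgraded to prime-to-$p$ isogenies by weak approximation for the unitary group $I_B$ (Theorem \ref{irred}) --- again requiring a hypersymmetric point. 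Finally, in the local step the group acting is not ``the unipotent radical of the centralizer of the Newton isocrystal'': Hecke-stability gives stability of $Z^{/x_0}$ under the local stabilizer group $\mathcal{U}_{x_0}(\mathbb{Z}_p)$, the unitary group of $(\End_{\mathcal{O}_B}(A_{x_0})\otimes\mathbb{Q},*)$, and it is precisely hypersymmetry that makes this group large enough that its representation on the Cartier modules of the two-slope constituents of the cascade is irreducible (Lemmas \ref{lempdiv}, \ref{lemequal}), forcing $Z^{/x_0}=C^{/x_0}$. Without hypersymmetry of $x_0$ the rigidity argument, as in the paper, does not go through.
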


In 1995, Chai proved the conjecture for ordinary points (\cite[Theorem 2]{chai1995every}) on Siegel modular varieties. In 2019, following a strategy similar to that in \cite{chai1995every}, Rong Zhou (\cite[Theorem 3.1.3]{zhou2019motivic}) proved the Hecke orbit conjecture for the $\mu$-ordinary locus on quaternionic Shimura surfaces and their associated unitary Shimura varieties. 

The first known case of the full conjecture is proven by C.-F. Yu for Hilbert modular varieties \cite{yu2004discrete}. Using the statement of Hilbert modular varieties, Chai and Oort proved in 2005 that the conjecture holds for Siegel modular varieties (\cite[Theorem 13.1]{chai2005hecke}).  

The present paper concerns the case of Shimura varieties of PEL type. They are moduli spaces of abelian varieties in characteristic $p$ with prescribed additional structures: polarization, action by a finite dimensional $\mathbb{Q}$-algebra, and level structure. We generalize the method in \cite{chai2005hecke} to applicable situations for PEL type Shimura varieties.

\subsection{Overview of results.}
We fix an integer prime $p$ and let $k$ be an algebraically closed field of characteristic $p.$ Let $\mathcal{D}=(B,\mathcal{O}_B,*,V,(\cdot,\cdot), h)$ be a Shimura datum of PEL type (see \ref{2.1} and \cite{kottwitz1992points} for details) for which $p$ is an unramified prime of good reduction. Write $F$ for the center of $B$ and $F_0$ the maximal subfield fixed under $*$. 

The main result of this paper is the following theorem. We refer the readers to Theorem \ref{mainthm} for the precise statement.

\begin{thm}
Let $\mathscr{S}$ be the reduction modulo $p$ of a Shimura variety of PEL type A or C over $k$ for which $p$ is an unramified prime of good reduction. Let $\mathcal{N}$ be a Newton stratum. Assume \begin{enumerate}
    \item $\mathcal{N}$ contains a $B$-hypersymmetric point $x_0$;
    \item either $p$ is totally split in $F/F_0$ and the Newton polygon of $\mathcal{N}$ satisfies condition (*), or $p$ is inert in $F/F_0$.
\end{enumerate}
Write $\mathcal{N}^0$ for the irreducible component of $\mathcal{N}$ containing $x_0.$ Then $H^p(x)$ is dense in $C(x)\cap\mathcal{N}^0$ for every $x\in\mathcal{N}^0(k).$ Moreover, if $\mathcal{N}$ is not the basic stratum, then $C(x)\cap \mathcal{N}^0$ is irreducible.
\end{thm}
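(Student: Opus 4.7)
The plan is to adapt the Chai--Oort strategy developed for Siegel modular varieties, with the $B$-hypersymmetric point $x_0$ playing the role that the Serre--Tate canonical lift plays in the ordinary case. The overall architecture is: first prove density of $H^p(x_0)$ in a formal neighborhood of $x_0$ inside $C(x_0)\cap\mathcal{N}^0$, then spread globally using irreducibility and the finite isogeny correspondences that identify any two central leaves in the same Newton stratum.

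As a first step I would localize at $x_0$. The almost-product structure of the Newton stratum lets me restrict attention to the formal completion $C(x_0)^{/x_0}$, equipped with its Serre--Tate--Rapoport--Zink style coordinates. The $B$-hypersymmetry of $x_0$ means that $\End^0_{\mathcal{O}_B}(X_0)$ is as large as possible (here $X_0$ is the $p$-divisible group of $x_0$), so the local stabilizer subgroup $\Gamma_{x_0}$ inside the automorphism group of $X_0$ acts on $C(x_0)^{/x_0}$ through prime-to-$p$ Hecke correspondences. The Zariski closure $Z$ of $H^p(x_0)$ therefore induces a closed formal subscheme $Z^{/x_0}\subset C(x_0)^{/x_0}$ that is stable under this $\Gamma_{x_0}$-action.

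The technical heart is then a rigidity statement: a closed formal $\Gamma_{x_0}$-stable subscheme of $C(x_0)^{/x_0}$ containing $x_0$ must be all of $C(x_0)^{/x_0}$. In the totally split case, the local structure at $x_0$ decomposes as a product indexed by places of $F$ above $p$, and one analyzes a biextension/formal-torus piece at each place; condition $(*)$ is what ensures that $\Gamma_{x_0}$ acts non-trivially on every one of these slopes, so that Chai's linearity results for formal $p$-divisible groups can be invoked factor-by-factor to force $Z^{/x_0}=C(x_0)^{/x_0}$. In the inert case, $\mathcal{O}_F\otimes\mathbb{Z}_p$ is a single unramified extension and one replaces the factor-by-factor analysis with a direct application of the corresponding rigidity for $\mathcal{O}_F$-linear formal $p$-divisible groups. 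Either way, the upshot is that $Z$ contains an open neighborhood of $x_0$ in $C(x_0)\cap\mathcal{N}^0$.

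Finally I would globalize. When $\mathcal{N}$ is not basic, one establishes irreducibility of $C(x_0)\cap\mathcal{N}^0$ via a monodromy argument on the central leaf (showing the relevant $\ell$-adic monodromy is large), so the local density promotes to $Z\supseteq C(x_0)\cap\mathcal{N}^0$; for basic $\mathcal{N}$ one works componentwise. Then since any $x\in\mathcal{N}^0(k)$ is linked to $x_0$ by a chain of prime-to-$p$ Hecke and $p$-power isogeny correspondences preserving $\mathcal{N}^0$, the density transports from $C(x_0)\cap\mathcal{N}^0$ to $C(x)\cap\mathcal{N}^0$. I expect the rigidity step to be the main obstacle: controlling $\Gamma_{x_0}$ and its action on the correct deformation-theoretic coordinates is delicate, especially in verifying that condition $(*)$ is exactly what prevents a proper $\Gamma_{x_0}$-stable formal subscheme from appearing. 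The monodromy input needed for irreducibility in the non-basic case is a close second in difficulty.
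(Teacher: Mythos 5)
Your outline of the local step at the hypersymmetric point is broadly in the spirit of the paper: one completes at $x_0$, uses the local stabilizer group acting on $C(x_0)^{/x_0}$ (via the cascade of Barsotti--Tate formal groups and Chai's rigidity for $p$-divisible formal groups), and shows the hypersymmetry forces the relevant representation to be irreducible, so the closure of the orbit fills out $C(x_0)^{/x_0}$. However, there are two genuine gaps.

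First, you misplace hypothesis (2). Condition (*) and the split/inert assumption on $p$ in $F/F_0$ play no role in the local rigidity at $x_0$: the continuous part at a $B$-hypersymmetric point is proved in the paper without them. They are needed exclusively to guarantee that an $F$-hypersymmetric point is also $F_0$-hypersymmetric (Proposition \ref{inert}), which is what allows the reduction of the CM (type A) case to the totally real case. Your proposed use of (*) "so that $\Gamma_{x_0}$ acts non-trivially on every slope" does not correspond to anything that is actually required, and omitting the subfield-hypersymmetry issue leaves the type A case unproved.

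Second, and more seriously, your globalization step fails. You claim that any $x\in\mathcal{N}^0(k)$ is linked to $x_0$ by prime-to-$p$ Hecke and $p$-power isogeny correspondences, so density transports from $C(x_0)\cap\mathcal{N}^0$ to $C(x)\cap\mathcal{N}^0$. But points of the same Newton stratum (even of the same central leaf) are in general \emph{not} isogenous to one another; the isogeny correspondences between central leaves in Oort's foliation theory do not connect an arbitrary $x$ to $x_0$, and density of $H^p(x_0)$ says nothing directly about the different orbit $H^p(x)$. The correct logic runs the other way: one must show that $\overline{H^p(x)}\cap C^0(x)$ itself \emph{contains} a $B$-hypersymmetric point $t$; then $\overline{H^p(t)}\subseteq\overline{H^p(x)}$ and the hypersymmetric case gives $\overline{H^p(t)}\supseteq C^0(x)$. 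Producing such a $t$ is the hard content of Section 7 of the paper: one locates a basic (supersingular) point in $\overline{H^p(x)}$, constructs suitable totally real extensions with prescribed local behavior, embeds a Hilbert modular variety through that point, and invokes the known Hecke orbit conjecture for Hilbert modular varieties, before reducing type A and general $B$ to this case via Proposition \ref{inert} and Skolem--Noether. None of this is replaced by your transport argument. Relatedly, for the discrete part you would still need transitivity of $H^p$ on $\pi_0(C(x)\cap\mathcal{N}^0)$ before the monodromy theorem applies; the paper obtains this from the almost product structure (finding mutually isogenous hypersymmetric points on all components) together with weak approximation, a step your sketch does not supply.
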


Assumption (2) only occurs in the case of PEL type A. Condition (*) (see Definition \ref{conditionstar}) amounts to a mild condition on the slope data of the Newton polygon attached to $\mathcal{N}.$ Condition (*) is only necessary for proving the main theorem for points that are not $B$-hypersymmetric when $B$ is not a totally real field. Theorem \ref{irred}, Theorem \ref{cts}, and Proposition \ref{B=F} are independent of this assumption.

The condition that $\mathcal{N}$ is not the basic stratum has to do with a monodromy result (Theorem \ref{thmk}) used in proving $C(x)\cap \mathcal{N}^0$ is geometrically irreducible (or equivalently, geometrically connected). A straightforward consequence of Theorem \ref{thmk} is that, if we further assume $\mathcal{N}$ is smooth, then the assumption that $\mathcal{N}$ is geometrically irreducible is equivalent to the condition that $H^{\Sigma}$ (see section \ref{section 4}) acts transitively on the set of geometrically irreducible components of $\mathcal{N}$. We obtain the following corollary.

\begin{cor}
Assumptions as in Theorem 1.2. Further assume that $N$ is smooth and that the prime-to-$\Sigma$ Hecke correspondences act transitively on $\pi_0(\mathcal{N})$. Then the Hecke orbit conjecture holds for $\mathcal{N}$.
\end{cor}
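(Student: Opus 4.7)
The plan is to reduce the general case to Theorem 1.2, which already delivers density on the distinguished component $\mathcal{N}^0$, by transporting points across irreducible components via prime-to-$\Sigma$ Hecke correspondences. Fix $x \in \mathcal{N}(k)$; I must show that $H^p(x)$ is Zariski dense in the central leaf $C(x)$.

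First I would translate $x$ into $\mathcal{N}^0$. Smoothness of $\mathcal{N}$ makes its irreducible components pairwise disjoint, and transitivity of $H^\Sigma \subseteq H^p$ on $\pi_0(\mathcal{N})$ supplies a prime-to-$\Sigma$ Hecke correspondence together with a point $y \in \mathcal{N}^0(k)$ satisfying $y \in H^p(x)$. Since any prime-to-$p$ isogeny preserves the geometric isomorphism class of the $p$-divisible group with its PEL structure, $C(x) = C(y)$ as subvarieties of $\mathscr{S}$, so $y \in C(x) \cap \mathcal{N}^0$. Applying Theorem 1.2 at $y$ then yields that $H^p(y) = H^p(x)$ is Zariski dense in $C(x) \cap \mathcal{N}^0$.

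Next I would spread this density to the remainder of $C(x)$. Because $C(x)$ is locally closed and smooth and $\mathcal{N}$ is smooth, there is a disjoint decomposition
\[
C(x) = \bigsqcup_{\alpha} \bigl(C(x) \cap \mathcal{N}^\alpha\bigr)
\]
indexed by the irreducible components of $\mathcal{N}$ meeting $C(x)$. Let $Z$ be the closure of $H^p(x)$ in $C(x)$. For every $T \in H^p$, the containment $T(H^p(x)) \subseteq H^p(x)$ combined with closedness of the finite correspondence $T$ gives $T(Z) \subseteq Z$, so $Z$ is $H^p$-stable. By the previous step, $Z \supseteq C(x) \cap \mathcal{N}^0$. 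For each other component $\mathcal{N}^\alpha$ meeting $C(x)$, transitivity on $\pi_0(\mathcal{N})$ furnishes a prime-to-$\Sigma$ correspondence $T_\alpha$ that restricts to a surjective finite correspondence $\mathcal{N}^0 \twoheadrightarrow \mathcal{N}^\alpha$; since $T_\alpha$ preserves central leaves, this further restricts to a surjective correspondence $C(x) \cap \mathcal{N}^0 \twoheadrightarrow C(x) \cap \mathcal{N}^\alpha$. Hence $C(x) \cap \mathcal{N}^\alpha \subseteq T_\alpha(Z) \subseteq Z$ for every $\alpha$, giving $Z = C(x)$.

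The main obstacle I anticipate is verifying that each $T_\alpha$ really restricts to a surjective correspondence between the central leaves on $\mathcal{N}^0$ and $\mathcal{N}^\alpha$, not merely to a containment. This should follow from the characterization of central leaves as fibers of the classifying map for the $p$-divisible group with its PEL structure, which is a prime-to-$p$ Hecke equivariant invariant, so that a correspondence exchanging two components of $\mathcal{N}$ automatically exchanges their slices through each central leaf. Once this restriction property is carefully unpacked, together with the elementary fact that finite (and hence proper) correspondences carry dense subsets to dense subsets, the remaining argument is purely formal $H^p$-invariance bookkeeping.
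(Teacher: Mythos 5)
Your argument is correct, but it takes a genuinely different route from the paper's. The paper reads the corollary straight off Corollary~\ref{equiv}(1): since $\mathcal{N}$ is smooth and $H^{\Sigma}$ acts transitively on $\pi_0(\mathcal{N})$, Theorem~\ref{thmk} forces $\mathcal{N}$ to be (geometrically) irreducible, so $\mathcal{N}^0=\mathcal{N}$, and Theorem~\ref{mainthm} then gives $\overline{H^p(x)}\supseteq C(x)\cap\mathcal{N}^0=C(x)$ with nothing left to do. You instead bypass irreducibility of $\mathcal{N}$ entirely: you use transitivity to translate any $x$ to a point $y\in\mathcal{N}^0(k)$ in the same prime-to-$p$ orbit, invoke Theorem~\ref{mainthm} at $y$ to get density in the slice $C(x)\cap\mathcal{N}^0$, and then propagate that density to the remaining slices $C(x)\cap\mathcal{N}^\alpha$ by Hecke-stability of $Z=\overline{H^p(x)}$ and the surjectivity of the restricted correspondences. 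The obstacle you single out is indeed the right one, and your resolution is sound: a prime-to-$p$ Hecke correspondence $T=(\mathscr{S}'\xrightarrow{a,b}\mathscr{S})$ satisfies $a^{-1}(C(x))=b^{-1}(C(x))$ because the $p$-divisible group with its PEL structure is unchanged under prime-to-$p$ isogeny, so any preimage under $b$ of a point of $C(x)\cap\mathcal{N}^\alpha$ lands, via $a$, in $C(x)\cap\mathcal{N}^0$ as soon as $b(a^{-1}(\mathcal{N}^0))\supseteq\mathcal{N}^\alpha$. The trade-off is clear: the paper's argument is shorter because it collapses your whole spreading step into the observation that there is only one component to spread to, while your version makes the mechanism explicit and does not need to pass through the statement that $\mathcal{N}$ itself is irreducible.
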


A key input in the proof strategy is $B$-hypersymmetric points. Hypersymmetric abelian varieties are mod-$p$ analogues of CM abelian varieties in characteristic $0.$ Originally developed by Chai and Oort (see \cite{chai2006hypersymmetric}), the notion refers to abelian varieties that admit as many endomorphisms as allowed by their associated Barsotti-Tate groups. In Section \ref{section 3}, we discuss the details regarding the existence of $B$-hypersymmetric points in relation to the shape of Newton polygons. In Section \ref{mu}, we restrict our attention to the $\mu$-ordinary locus in Shimura varieties of PEL type A. We deduce two sufficient conditions for a Newton stratum to contain a $B$-hypersymmetric point. These conditions combined with the main theorem imply the following (notations as in Section \ref{2.1}).

\begin{cor}[Corollary \ref{cormu}]\begin{enumerate}
    \item Suppose $p$ is inert in $F$. If every slope of the Newton polygon attached to $\mathcal{N}$ has the same multiplicity, then the Hecke Orbit conjecture holds for any irreducible component of $\mathcal{N}$ containing a $B$-hypersymmetric point.
    \item Suppose the center of $B$ is a CM field. Assume that the signature of $\mathscr{S}$ has no definite place, and that $p$ is a prime of constant degree in the extension $F/\mathbb{Q}$. Further assume assumption 2 of the main theorem is satisfied. Then the Hecke orbit conjecture holds for every irreducible component of the $\mu$-ordinary stratum.
\end{enumerate}
\end{cor}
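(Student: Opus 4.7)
Both parts of the corollary will be derived by combining the main theorem with sufficient criteria for the existence of $B$-hypersymmetric points developed in Section \ref{mu}. The plan therefore splits into two pieces: first, verify assumption (2) of the main theorem; second, produce a $B$-hypersymmetric point in each irreducible component of interest and invoke Theorem \ref{mainthm}.

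\textbf{Part (1).} Given an irreducible component $\mathcal{N}^0$ of $\mathcal{N}$ containing a $B$-hypersymmetric point $x_0$, assumption (1) of the main theorem is satisfied by hypothesis. Since $p$ is inert in $F$ and $F_0\subset F$, the prime $p$ is in particular inert in $F/F_0$, placing us in the second case of assumption (2) and allowing us to dispense with condition (*). Theorem \ref{mainthm} then yields density of $H^p(x)$ in $C(x)\cap\mathcal{N}^0$ for every $x\in\mathcal{N}^0(k)$. The equal-multiplicity hypothesis on the slopes functions as the sufficient criterion (to be proved in Section \ref{mu}) guaranteeing that a $B$-hypersymmetric point in $\mathcal{N}$ actually exists; without it, the statement could be vacuous.

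\textbf{Part (2).} The only new ingredient beyond part (1) is a uniform existence statement: under the stated hypotheses on the CM center of $B$, on the signature, and on the decomposition behavior of $p$, every irreducible component of the $\mu$-ordinary stratum contains a $B$-hypersymmetric point. Granted this, assumption (2) of the main theorem holds by hypothesis, and Theorem \ref{mainthm} applies component-by-component. The plan for the existence statement is to construct a $B$-hypersymmetric abelian variety from CM data of the correct signature (the no-definite-place hypothesis guarantees such CM data exist and are compatible with the polarization form), realize it as a point of the $\mu$-ordinary stratum, and then spread $B$-hypersymmetric points across all components via prime-to-$p$ Hecke correspondences, using that $B$-hypersymmetry is invariant under prime-to-$p$ isogeny.

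\textbf{Main obstacle.} Verifying the hypotheses of the main theorem in each case is routine; the genuine work lies in the existence results of Section \ref{mu} that feed into the corollary. The harder of the two is part (2): one must produce a $B$-hypersymmetric point in \emph{every} irreducible component of the $\mu$-ordinary stratum, not merely one in the stratum, and this requires a transitivity property for the action of prime-to-$p$ Hecke correspondences on $\pi_0$ of the stratum. The constant-degree hypothesis on $p$ is what aligns local and global decompositions so that the CM input can be tuned to hit a prescribed component, while the no-definite-place assumption is what permits the CM construction to respect the signature of $\mathscr{S}$.
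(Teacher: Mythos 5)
Your proposal is correct and follows essentially the same route as the paper: both parts reduce to Theorem \ref{mainthm}, with the existence of a $B$-hypersymmetric point supplied by Corollary \ref{3.3} (equal multiplicities with $p$ inert in $F$ for part (1), no definite place plus constant degree for part (2), and $p$ inert in $F$ forcing $p$ inert in $F/F_0$ so that assumption (2)(ii) holds), and with the point spread to every irreducible component of the $\mu$-ordinary stratum exactly as in the paper, via the transitive action of $G(\mathbb{A}_f^p)$ on $\pi_0(\mathscr{S})$ together with the invariance of hypersymmetry under isogeny. One small correction to your commentary: the constant-degree hypothesis is used only to make the $\mu$-ordinary Newton polygon $B$-balanced (via Moonen's slope formula and the no-definite-place condition), not to ``tune the CM input to hit a prescribed component''--- reaching every component requires no extra hypothesis beyond Hecke transitivity on $\pi_0(\mathscr{S})$.
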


The statement of our main theorem is restricted to irreducible components of Newton strata which contain $B$-hypersymmetric points. In general, it is not known whether Newton strata in Shimura varieties of PEL type are irreducible or not. On the other hand, it is well-known that if the basic locus coincides with the supersingular locus, then it is discrete; otherwise, the basic locus may be of positive dimension. 

Oort and Chai proved in \cite[Theorem A]{chai2011monodromy} that every non-supersingular Newton stratum in a Siegel modular variety is irreducible. Siegel modular varieties classify abelian varieties equipped with a principal polarization. However, the statement is not correct if the polarization is not principal (see \cite{chai2011monodromy}). 

For Shimura varieties of PEL type, the only irreducibility result that the author is aware of is \cite[Theorem 1.1]{MR3240772}, where Achter proved for a special class of PEL type Shimura varieties that all Newton strata are irreducible. His result allows us to deduce the following consequence.

\begin{cor}[Corollary \ref{corachter}]
Let $L$ be an imaginary quadratic field inert at the rational prime $p$. The Hecke Orbit conjecture holds for the moduli space of principally polarized abelian varieties of dimension $n\ge 3$ over $k$ equipped with an action by $\mathcal{O}_L$ of signature $(1, n-1).$
\end{cor}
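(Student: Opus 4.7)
The plan is to reduce the corollary to Theorem 1.2 by checking its two hypotheses for every Newton stratum of this particular Shimura variety, and to use Achter's irreducibility theorem to upgrade the density statement on $\mathcal{N}^0$ to a density statement on the whole central leaf through an arbitrary geometric point.

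First I would identify the PEL datum: the relevant $B$ is the imaginary quadratic field $L$, so the center $F = L$ and the maximal totally real subfield $F_0 = \mathbb{Q}$. Since $p$ is inert in $L/\mathbb{Q}$ it is in particular unramified, and the Kottwitz condition determined by signature $(1,n{-}1)$ together with the principal polarization yields a PEL datum of type A with good reduction at $p$. Hypothesis (2) of Theorem 1.2 is then automatic, because $p$ is inert in $F/F_0 = L/\mathbb{Q}$; in particular Condition (*) never has to be checked.

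Next I would invoke \cite[Theorem 1.1]{MR3240772}: Achter shows that in precisely this setting every Newton stratum $\mathcal{N}$ is geometrically irreducible, so $\mathcal{N}^0 = \mathcal{N}$. Consequently, for any $x \in \mathcal{N}(k)$, the central leaf $C(x)$ lies entirely in $\mathcal{N}^0$ and hence $C(x)\cap \mathcal{N}^0 = C(x)$. Thus the density statement produced by Theorem 1.2 is already the Hecke orbit conjecture for the point $x$, provided hypothesis (1) is met.

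The step I expect to be the main obstacle is hypothesis (1): every Newton stratum must contain a $B$-hypersymmetric point. I would invoke the existence criteria from Section \ref{section 3}, which translate $B$-hypersymmetry into explicit arithmetic constraints on the slope sequence and its compatibility with the action of $\mathcal{O}_L \otimes \mathbb{Z}_p$. Because $p$ is inert and the signature is $(1,n{-}1)$, Kottwitz's compatibility condition at $p$ forces every admissible Newton polygon to have a highly constrained slope sequence; in particular the slopes come in symmetric pairs governed by a single parameter, so the combinatorial data is small enough to verify the criterion stratum by stratum (or to produce the hypersymmetric object directly as a product of isoclinic $p$-divisible $\mathcal{O}_L\otimes \mathbb{Z}_p$-modules with the appropriate polarization).

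Once both hypotheses are in hand, Theorem 1.2 applied to each Newton stratum $\mathcal{N}$ gives density of $H^p(x)$ in $C(x)$ for every $x$, which is precisely Conjecture \ref{conj} for this Shimura variety. The restriction $n \geq 3$ enters through Achter's irreducibility result; the basic stratum causes no trouble since in the inert unitary case of signature $(1,n{-}1)$ the basic locus is supersingular and thus of dimension zero, so its central leaves are points and the conjecture holds there trivially.
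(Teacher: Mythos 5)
Your overall route is the same as the paper's: hypothesis (2) of Theorem \ref{mainthm} is automatic because $p$ is inert in $F/F_0=L/\mathbb{Q}$, and Achter's irreducibility theorem \cite{MR3240772} lets you identify $\mathcal{N}^0=\mathcal{N}$ so that $C(x)\cap\mathcal{N}^0=C(x)$ and the conclusion of Theorem \ref{mainthm} becomes the full conjecture. However, the step you defer --- that \emph{every} Newton stratum contains an $L$-hypersymmetric point --- is not an incidental verification; it is essentially the entire content of the paper's proof. The paper quotes the explicit classification of admissible Newton polygons from \cite[Section 3.1]{bultel2006congruence}: each one is $N(r)+(1/2)^{n-2r}$ for some $0\le r\le n/2$, and one then checks directly that each such polygon is an amalgamation of disjoint $L$-balanced polygons (the two-slope part with slopes $\tfrac12\pm\tfrac1{2r}$ and the isoclinic slope-$\tfrac12$ part), hence $L$-symmetric, so Theorem \ref{hs} and its corollary apply. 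Since $p$ is inert there is only one prime of $L$ above $p$ and the check is short, but "the combinatorial data is small enough to verify" is an assertion, not a proof; you should carry out exactly this computation. (The paper also observes that in this setting every isogeny class of Barsotti--Tate groups with structure contains a single isomorphism class, so central leaves coincide with Newton strata, and then reruns the argument of Theorem \ref{mainthm} using Theorem \ref{irred} and Theorem \ref{cts} rather than citing the theorem as a black box; your shortcut via the stated theorem reaches the same place.)

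Your closing remark about the basic stratum is factually wrong and should be removed or repaired. For $p$ inert and signature $(1,n-1)$ with $n\ge 3$, the supersingular locus is \emph{not} zero-dimensional: by Vollaard--Wedhorn it is pure of dimension $\lfloor (n-1)/2\rfloor\ge 1$. What is zero-dimensional are the central leaves inside the basic stratum, and even there the conjecture is not "trivial": density of $H^p(x)$ in a discrete leaf is precisely the discrete part, i.e.\ the nontrivial transitivity statement. In your framework the basic stratum is handled because its Newton polygon is isoclinic of slope $1/2$, hence $L$-balanced, so the stratum contains an $L$-hypersymmetric point and Theorem \ref{irred} (transitivity of $H^p$ on the components of $C(x)\cap\mathcal{N}^0$) together with the density statement of Theorem \ref{mainthm} applies to it as well; that, and not the purported zero-dimensionality of the basic locus, is the correct justification.
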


\subsection{Overview of strategy.}
A general strategy for attacking Conjecture \ref{conj} is to break it down into the following two parts as in \cite[Conjecture 4.1]{chai2005hecke}. It is clear that the conjecture is equivalent to their conjunction.

\begin{itemize}
\item The discrete part: the prime-to-$p$ Hecke orbit of $x$ meets every irreducible component of the central leaf $C(x)$ passing through $x$;
\item The continuous part: the Zariski closure of the prime-to-$p$ Hecke orbit of $x$ in $C(x)$ has the same dimension as $C(x)$.
\end{itemize}

In this section, We first give a brief description for Chai and Oort's strategy for proving the Hecke orbit conjecture for Siegel modular varieties in \cite{chai2005hecke}. Then we highlight the differences as well as new ideas in our approach.  

For the discrete part, Chai and Oort proved a stronger result that every central leaf not contained in the supersingular stratum is irreducible. This is a consequence of the fact that on a Siegel modular variety, every non-supersingular Newton stratum on a Siegel modular variety is irreducible (\cite[Theorem A]{chai2011monodromy}), and every Newton stratum contains a hypersymmetric point (\cite[Theorems 5.4]{chai2006hypersymmetric}). 

For the continuous part, one analyzes the formal completion of $C(x)$ at a split hypersymmetric point. A point is called split if the corresponding abelian variety is isogenous to a product of abelian varieties with at most two slopes. It turns out that the formal completion of the Zariski closure of the prime-to-$p$ Hecke orbit of a split hypersymmetric point $y$ in $C(x)$ is a formal Barsotti-Tate subgroup of the formal completion of $C(x)$. Furthermore, the action of the local stabilizer group of $y$ on $C(x)^{/x}\cong C(x)^{/y}$ underlies an absolutely irreducible representation. Thus the conjecture is true for any split hypersymmetric point. To deduce the statement for arbitrary geometric points, one uses the Hecke orbit conjecture for Hilbert modular varieties to find a split hypersymmetric point in the interior of the closure of any prime-to-$p$ Hecke orbit in its central leaf. This completes the proof in the Siegel case.

Shimura varieties of PEL type are subvarieties of Siegel modular varieties cut out by the condition of having an action by the ring of integers in a prescribed finite dimensional semisimple $\mathbb{Q}$-algebra $B$. Although many of the ingredients used in \cite{chai2005hecke} are known for PEL type Shimura varieties, there are two major things that do not work the same way.

First, $B$-hypersymmetric points on PEL type Shimura varieties are not as abundant as in the Siegel case. Not every Newton stratum contains a hypersymmetric point (see \cite[Example 5.3]{zong2008hypersymmetric} and Example \ref{EmptyExample}). A rephrase of Zong's main result \cite[Theorem 5.1.1]{zong2008hypersymmetric} says that a Newton stratum contains a $B$-hypersymmetric point if and only if the associated Newton polygon is $B$-symmetric (see Theorem \ref{hs}).

Secondly, Chai and Oort's approach depends on the Hilbert trick, which refers to the property that every point on a Siegel modular variety comes from a Hilbert modular variety (see \cite[Section 9]{chai2005hecke}). The application of this fact is two-fold: (1) to find a hypersymmetric point inside the closure of every Hecke orbit inside its central leaf and (2) to show that one can take such a hypersymmetric point to be split, thereby reducing the continuous part into the two-slope case. The Hilbert trick is true for PEL type C only, where the Hecke correspondences also come from a symplectic group. For a general simple $\mathbb{Q}$-algebra $B$, we deduce the conjecture under mild conditions from the conjecture for the Shimura variety attached to $F_0,$ the maximal subfield of $B$ fixed under the positive involution $*$ of $B.$ We bypass the second usage of the Hilbert trick by leveraging on the fact that the formal completion of a central leaf on a PEL type Shimura variety admits a cascade structure built up from Barsotti-Tate formal groups. The formal completion of the Zariski closure of a prime-to-$p$ Hecke orbit, as a subscheme of the formal completion of the central leaf, is then determined by its image in the two-slope components of the cascade. We thereby reduce to an analogue of a step in the proof of the Siegel case, establishing the desired equality at the level of two-slope components, from which the continuous part follows by an inductive argument.

\begin{rmk}
We exclude PEL type D in this paper, in which case the algebraic group $G$ is disconnected. We expect to extend the method to cover case D with extra work in the future.
\end{rmk}

\begin{rmk}
Sug-Woo Shin \cite{shin} announced a proof for the irreducibility of central leaves on Shimura varieties of Hodge type. His proof relies on the theory of automorphic forms, and, unlike Chai and Oort's approach, is independent of the irreducibility of Newton strata. Since our proof of the continuous part also does not depend on the irreducibility of Newton strata, Shin's result combined with Theorem 1.2 will yield the following.
\end{rmk}

\begin{thm}
Let $\mathscr{S}$ be the reduction modulo $p$ of a Shimura variety of PEL type A or C over $k$ for which $p$ is an unramified prime of good reduction. Let $\mathcal{N}^0$ be an irreducible component of a Newton stratum. 
\begin{enumerate}
    \item Assume $\mathcal{N}^0$ contains a $B$-hypersymmetric point $x_0$;
    \item either $p$ is totally split in $F/F_0$ and the Newton polygon of $\mathcal{N}^0$ satisfies condition (*), or $p$ is inert in $F/F_0$.
\end{enumerate}
Then $H^p(x)$ is dense in $C(x)$ for every $x\in\mathcal{N}^0(k).$
\end{thm}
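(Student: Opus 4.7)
The plan is to deduce this theorem as a direct consequence of the main theorem (Theorem 1.2) together with Shin's announced irreducibility result for central leaves on Shimura varieties of Hodge type. Since PEL type A and C Shimura varieties are of Hodge type, Shin's theorem applies and tells us that for every $x \in \mathscr{S}(k)$, the central leaf $C(x)$ is geometrically irreducible, and in particular connected. Note that, unlike the irreducibility statement in Theorem 1.2, Shin's result requires no ``non-basic'' assumption, which is why the present theorem carries no such restriction either.

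The topological bridge I would then invoke is the following: an irreducible central leaf $C(x) \subseteq \mathcal{N}$ must lie in a single irreducible component of $\mathcal{N}$. Writing $\mathcal{N} = \bigcup_{i \in I} \mathcal{N}_i$ as the finite decomposition of the Newton stratum into irreducible components, each $\mathcal{N}_i$ is closed in $\mathcal{N}$, so
\[
C(x) = \bigcup_{i \in I}\bigl(C(x) \cap \mathcal{N}_i\bigr)
\]
exhibits the irreducible set $C(x)$ as a finite union of closed subsets. Hence $C(x) = C(x) \cap \mathcal{N}_i$ for some $i$, and the hypothesis $x \in \mathcal{N}^0$ forces this index to correspond to $\mathcal{N}^0$. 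Therefore $C(x) \subseteq \mathcal{N}^0$, i.e.\ $C(x) \cap \mathcal{N}^0 = C(x)$.

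Combining the two inputs then concludes the argument: by Theorem 1.2 (whose hypotheses (1) and (2) are exactly the assumptions made here), $H^p(x)$ is dense in $C(x) \cap \mathcal{N}^0$, which by the previous paragraph coincides with the full central leaf $C(x)$. Hence $H^p(x)$ is dense in $C(x)$, as required.

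Since both inputs are invoked as black boxes, there is no genuine mathematical obstacle in this deduction; the only thing to be careful about is the logical dependence on Shin's (announced) theorem, and the verification that its setting — central leaves on Hodge type Shimura varieties — subsumes the PEL type A and C leaves considered here. The substantive work, including the role of the $B$-hypersymmetric point $x_0$, the cascade-based formal-geometric analysis near $x_0$, and the splitting/inert dichotomy governing condition (*), is already packaged into Theorem 1.2.
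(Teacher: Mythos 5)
Your strategy matches the paper's: the remark immediately preceding this theorem asserts, without supplying details, that Shin's irreducibility result for central leaves on Hodge type Shimura varieties, combined with Theorem 1.2, yields the statement, and you have filled in what is clearly the intended argument.

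One step deserves scrutiny, though. After deducing that the irreducible set $C(x)$ is contained in some irreducible component $\mathcal{N}_j$ of $\mathcal{N}$, you assert that the hypothesis $x\in\mathcal{N}^0$ forces $\mathcal{N}_j=\mathcal{N}^0$. That is not a purely topological fact: a point of $\mathcal{N}$ may a priori lie on several irreducible components simultaneously (at a crossing), and then nothing forces the component containing $C(x)$ to be the distinguished one $\mathcal{N}^0$. If $\mathcal{N}_j\neq\mathcal{N}^0$, then $C(x)\cap\mathcal{N}^0$ is a proper closed subset of the irreducible leaf $C(x)$, and Theorem 1.2 only gives density of $H^p(x)$ in that proper subset, not in all of $C(x)$. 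Your deduction is valid exactly when $\mathcal{N}^0$ is the unique component of $\mathcal{N}$ through $x$ --- guaranteed for instance when $\mathcal{N}$ is smooth at $x$ (compare the smoothness hypotheses that already appear in Corollary 4.4 and Remark 4.5) or when $\mathcal{N}$ is irreducible. The paper leaves this implicit as well, merely asserting the combination of inputs, so you are inheriting a pre-existing imprecision rather than introducing a new error; still, it is the one place in your argument where the deduction is not entirely formal, and it would be worth making the needed hypothesis on $\mathcal{N}$ (or on the point $x$) explicit.
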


\subsection{Paper outline.}
We briefly discuss the organization of this paper. Sections 2 recalls definitions and various known facts relevant to our context. Section \ref{section 3} develops new terminologies to describe $B$-hypersymmetric points on PEL type Shimura varieties. We rephrase Zong's main result in \cite{zong2008hypersymmetric} in simpler language and derive conditions on the existence of hypersymmetric points in special cases relevant to our applications. Section \ref{section 4} contains the proof of a monodromy result that serves as a key input in proving the irreducibility of $C\cap\mathcal{N}^0.$ This result generalizes \cite[Theorem 5.1]{chai2005hecke} and the main result of \cite{kasprowitz2012monodromy}. Section 5 contains the proof of the discrete part of the main theorem. In section 6, we prove the continuous part for $B$-hypersymmetric points, which then culminates in section 7 with a reduction argument proving the main theorem at general points. We restrict our attention to Cases A and C in sections 5 and 7.

\subsection{Future directions.}
Some of the tools used for proving the conjecture for PEL case are known in more general settings. For example, the almost product structure of Newton strata and Serre-Tate theory are both generalized to Shimura varieties of Hodge type (see \cite{hamacher2019product}; \cite{hong2019hodge} and \cite{shankar2016serre}). However, the right notion for hypersymmetric points in the Hodge type case remains open.

\hfill

\noindent\textbf{Acknowledgements.} I sincerely thank my advisor Elena Mantovan for her continuous patience and unwavering support. I'm grateful to Ana Caraiani, Serin Hong, Marc-Hubert Nicole, Eric Rains, and Sug Woo Shin for helpful discussions and correspondences.

\section{Preliminaries}
\subsection{Shimura varieties of PEL type.}\label{2.1}
Fix a prime number $p$ throughout the rest of this paper. We are interested in moduli problems of PEL type as given in Kottwitz \cite[\textsection 5]{kottwitz1992points}.

Let $\mathcal{D}=(B,\mathcal{O}_B,*,V,(\cdot,\cdot), h)$ be a Shimura datum of PEL type consisting of the following information: \begin{itemize}
    \item $B$, a finite dimensional simple $\mathbb{Q}$-algebra; 
    \item $\mathcal{O}_B$, a maximal $\mathbb{Z}_{(p)}$-order of $B$;
    \item $*$, a positive involution on $B$ preserving $\mathcal{O}_B$;
    \item $V$, a nonzero finitely generated left $B$-module such that $V_{\mathbb{Q}_p}$ contains a self dual lattice preserved by $\mathcal{O}_B$;
    \item $(\cdot,\cdot)$, a $\mathbb{Q}$-valued nondegenerate hermitian form on $V$ such that $(bv,w)=(v,b^*w)$ for all $v,w\in V$ and all $b\in B;$
    \item a homomorphism $h:\mathbb{C}\rightarrow \End_{B\otimes_{\mathbb{Q}}\mathbb{R}}(V\otimes_{\mathbb{Q}}\mathbb{R})$ such that $(v,w)\mapsto (v,h(\sqrt[]{-1})w)$ is a positive definite symmetric form on $V_{\mathbb{R}}$.
\end{itemize}

Let $F$ denote the center of $B$ and let $F_0$ be the maximal totally real subfield of $F$ fixed under $*.$

We assume in addition that $p$ is an unramified prime of good reduction for the Shimura datum $\mathcal{D}$. Equivalently, $B_{\mathbb{Q}_p}$ is a product of matrix algebras over unramified extensions of $\mathbb{Q}_p$. In particular, $F$ is unramified at $p$.

One associates to the Shimura datum $\mathcal{D}$ an linear algebraic group $G$ such that $G(R)=\{x\in \End_B(V)\otimes_{\mathbb{Q}}R|xx^*\in R^{\times}\}$ for any $\mathbb{Q}$-algebra $R$. The homomorphism $h$ gives a decomposition of the $B_{\mathbb{C}}$-module $V_{\mathbb{C}}$ as $V_{\mathbb{C}}=V_1\oplus V_2$, where $V_1$ (resp. $V_2$) is the subspace on which $h(z)$ acts by $z$ (resp. by $\overline{z}$). $V_1$ and $V_2$ are $B_{\mathbb{C}}$-submodules of $V_{\mathbb{C}}.$ The field of definition of the isomorphism class of the complex representation $V_1$ of $B$, denoted by $E,$ is called the reflex field of $\mathcal{D}.$

Now we describe the following moduli problem associated to the Shimura datum $\mathcal{D}$ as given in \cite[Section 5]{kottwitz1992points}. Let $\mathbb{A}_f^p$ denote the ring of finite adeles attached to $\mathbb{Q}$ with trivial $p$-component. Let $K=K_pK^p\subseteq G(\mathbb{A}_f)$ be a subgroup, with $K_p$ being a fixed hyperspecial maximal compact subgroup of $G(\mathbb{Q}_p),$ and $K^p$ being a compact open subgroup of $G(\mathbb{A}_f^p)$. 

Consider the contravariant functor from the category of locally-Noetherian schemes $S$ over $\mathcal{O}_{E,(p)}:=\mathcal{O}_E\otimes_{\mathbb{Z}}\mathbb{Z}_{(p)}$ that associates to $S$ the set of isomorphism classes of quadruples $(A,\lambda, i, \overline{\eta})$, where \begin{itemize}
\item $A$ is an abelian scheme over $S$;
\item $\lambda$ is a prime-to-$p$ polarization of $A;$
\item $i:\mathcal{O}_B \hookrightarrow \End(A)\otimes_{\mathbb{Z}}\mathbb{Z}_{(p)}$ is a morphism of $\mathbb{Z}_{(p)}$-algebras such that $\lambda\circ i(\alpha^*)=i(\alpha)^{\vee}\circ\lambda$ and $\det(b,\text{Lie}(A))=\det(b,V_1)$ for all $\alpha\in\mathcal{O}_B;$
\item $\overline{\eta}$ is a prime-to-$p$ level $K^p$ structure in the following sense. Let $s$ be a geometric point of $S.$ Denote by $A_s$ the fiber of $A$ over $s$ and let $H_1(A_s,\mathbb{A}_f^p)$ denote its Tate $\mathbb{A}_f^p$-module. A level structure of type $K^p$ on $A$ is a $K^p$-orbit $\overline{\eta}$ of isomorphisms $\eta:V_{\mathbb{A}_f^p}\rightarrow H_1(A_s,\mathbb{A}_f^p)$ as skew-Hermitian $B$-modules such that $\overline{\eta}$ is fixed by $\pi_1(S,s).$
\end{itemize}

Two quadruples $(A,\lambda,i,\overline{\eta})$ and $(A',\lambda',i',\overline{\eta}')$ are said to be isomorphic if there exists a prime-to-$p$ isogeny $f:A\rightarrow A'$ such that $\lambda=rf^{\vee}\circ\lambda'\circ f$ for some positive integer $r\in \mathbb{Z}_{(p)}^{\times}$, $f\circ i=i'\circ f$ and $\overline{\eta'}=f\circ \overline{\eta}.$

When $K^p$ is sufficiently small, this functor is representable by a smooth quasi-projective scheme $\mathcal{S}_{K^p}$ defined over $\mathcal{O}_{E,(p)}$ (see \cite[Section 5]{kottwitz1992points}).

We recall that in the terminologies of Kottwitz, such a moduli problem is said to be of type A if $G$ is unitary, type C if symplectic, and type D if orthogonal. Write $F_0$ for the subfield of $F$ fixed by the positive involution $*$. In the case of type A, $F\neq F_0$. Otherwise $F$ is a totally real field.

As the level $K^p$ varies, the varieties $\mathcal{S}_{K^p}$ form an inverse system that carries a natural action by $G(\mathbb{A}_f^p).$ 

From now on we fix a prime $v$ of $E$ over $p$ with residue field $\kappa$ and denote by $$\mathscr{S}_{K^p}:=\mathcal{S}_{K^p}\otimes\overline{\kappa}$$ the special fiber of $\mathcal{S}_{K^p}$ over $v.$

\subsection{Newton stratification and Oort's foliation.} Let $k$ be an algebraically closed field of characteristic $p.$ Write $W=W(k)$ for the Witt ring of $k$ and $L$ the fraction field of $W.$ By an abuse of notation, we use $\sigma$ to denote both the Frobenius on $k$ and that of $L$ over $\mathbb{Q}_p.$ We define the set $B(G)$ of $\sigma$-conjugacy classes of $G(L)$ by $$B(G)=\{[b]|b\in G(L)\},$$ where $$[b]=\{g^{-1}b\sigma(g)|g\in G(L)\}.$$

An $F$-isocrystal is a finite dimensional vector space $V$ over $L$ with the Frobenius automorphism $F:V\rightarrow V.$ An $F$-isocrystal with $G$-structure is an exact faithful tensor functor $$\text{Rep}_{\mathbb{Q}_p}(G)\rightarrow \text{F-Isoc}(k).$$ 
For each $b\in G(L)$, there is an associated $F$-isocrystal with $G$-structure $N_b$ given by $N_b(W,\rho)=(W_L,\rho(b)(id_W\otimes\sigma))$ (see \cite[\textsection 3.4]{rapoport1996classification}). The isomorphism class of $N_b$ is fixed under $\sigma$-conjugation in $G(L).$ Hence the set $B(G)$ is identified with the set of isomorphism classes of $F$-isocrystals with $G$-structures. 

According to the Dieudonn\'e-Manin classification, the category of $F$-isocrystals is semi-simple, where the simple objects are parametrized by rational numbers called slopes. 

Kottwitz classfied points in $B(G)$ by associating to each $\sigma$-conjugacy class $[b]\in B(G)$ a Newton point and a Kottwitz point (see \cite{kottwitz1985isocrystals}, \cite{kottwitz1997isocrystals}, and \cite{rapoport1996classification}). The set of Newton point admits a partial order $\preceq$.

Let $\mu$ be a conjugacy class of cocharacters of $G.$ To $\mu$ we associate the Newton point $$\overline{\mu}=\frac{1}{r}\sum_{i=0}^{r-1}\sigma^i(\mu),$$where $r$ is an integer such that $\sigma^r$ fixes $\mu.$ An element $[b]\in B(G)$ is said to be $\mu$-admissible if the Newton point corresponding to $[b]$ is less than $\overline{\mu}$. We write $B(G,\mu)$ for the set of $\mu$-admissible elements of $B(G).$ $B(G,\mu)$ is finite with a unique minimum called the basic point, and a unique maximum called the $\mu$-ordinary point (see \cite{kottwitz1997isocrystals}).

From now on, we write $b$ for the conjugacy class $[b]\in B(G).$ For any geometric point $x\in\mathscr{S},$ let $X_x$ be the fiber of the universal Barsotti-Tate group at $x.$ Write $N_x$ for the $F$-isocrystal associated to $X_x,$ then $N_x$ uniquely determines an element $b_x\in B(G_{\mathbb{Q}_p},\mu_{\overline{\mathbb{Q}}_p}).$ The following result is due to Rapoport and Richartz (see \cite[Theorem 3.6]{rapoport1996classification}). 

For $b\in B(G_{\mathbb{Q}_p})$, the set $$\mathscr{S}(\preceq b)=\{x\in\mathscr{S}|b_x\preceq b\}$$is a closed subscheme of $\mathscr{S}$ called the closed Newton stratum attached to $b$. The sets $\mathscr{S}(\preceq b)$ form the Newton stratification of $\mathscr{S}$ by closed subschemes of $\mathscr{S}$ indexed by $b\in B(G_{\mathbb{Q}_p}).$ 

The open Newton stratum attached to $b\in B(G_{\mathbb{Q}_p})$ is defined as $$\mathcal{N}_b=\mathscr{S}(\preceq b)-\cup_{b'\preceq b}\mathscr{S}(\preceq b').$$
It is a locally-closed reduced subscheme of $\mathscr{S}.$ Moreover, the stratum $\mathcal{N}_b$ is non-empty if and only if $b\in B(G_{\mathbb{Q}_p},\mu_{\overline{\mathbb{Q}}_p})$ (see \cite[Theorem 1.6]{viehmann2013ekedahl}). Moreover, in the situations of interest to us, this stratification coincides with the classical Newton stratification determined by the isogeny class of the geometric fibers of the universal Barsotti-Tate group (see \cite[Theorem 3.8]{rapoport1996classification}).

Now we fix a conjugacy class $b$ in $B(G_{\mathbb{Q}_p},\mu_{\overline{\mathbb{Q}}_p})$ and write $\mathbb{X}$ for a corresponding Barsotti-Tate group with $G_{\mathbb{Q}_p}$-structure defined over $\overline{\kappa}$. A Barsotti-Tate group $\mathbb{X}'$ over a field $k'\supset \overline{\kappa}$ is geometrically isomorphic to $\mathbb{X}$ (denoted $\mathbb{X}'\cong_g\mathbb{X})$ if $\mathbb{X}'$ and $\mathbb{X}$ become isomorphic over an extension of $k'.$ The central leaf associated to $\mathbb{X}$ is defined as $$C_{\mathbb{X}}=\{x\in \mathscr{S}|X_x\cong_g \mathbb{X}\},$$ where $X$ stands for the universal Barsotti-Tate group. By definition, we have $C_{\mathbb{X}}\subseteq \mathcal{N}_b.$ Moreover, $C_{\mathbb{X}}$ is a locally-closed smooth subscheme of the Newton stratum $\mathcal{N}_b$ (see \cite{oort2004foliations} and \cite[Proposition 1]{mantovan2005cohomology}). 

For a geometric point $x\in \mathscr{S}(k),$ we say $C(x):=C_{X_x}$ is the central leaf passing through $x.$

On any fixed Newton stratum $\mathcal{N}_b$, central leaves give a foliation structure, called the central foliation. Any closed point $x\in \mathcal{N}_b$ is contained in exactly one central leaf. If $x,x'$ are two points in $\mathcal{N}_b$, there exists a scheme $T$ and finite surjective morphisms $C(x)\twoheadleftarrow T\twoheadrightarrow C(x')$. In particular, $\dim C(x)=\dim C(x')$ (see \cite[\textsection 2 and \textsection 3]{oort2004foliations}).

\subsection{Hecke symmetries and Hecke orbits.}
As $K^p$ varies over all sufficiently small compact open subgroups of $G(\mathbb{A}_f^p),$ the Shimura varieties $\mathscr{S}_{K^p}$ form an inverse system $\varprojlim_{K^p}\mathscr{S}_{K^p}$. If $K_1^p\subseteq K_2^p$ are compact open subgroups of $G(\mathbb{A}_f^p),$ there is an \'etale covering $\mathscr{S}_{K_1^p}\rightarrow \mathscr{S}_{K_2^p}$ given by $(A,\lambda,i,(\overline{\eta})_1)\mapsto (A,\lambda,i,(\overline{\eta})_2),$ where $(\overline{\eta})_i$ denotes the $K_i^p$-orbit of $\eta$ and the map is given by extending $(\overline{\eta})_1$ to the $K_2^p$-orbit. 

The inverse system $\varprojlim_{K^p}\mathscr{S}_{K^p}$ admits a natural right action by $G(\mathbb{A}_f^p).$ For $g\in G(\mathbb{A}_f^p),$ the corresponding map $$\mathscr{S}_{K^p}\rightarrow \mathscr{S}_{g^{-1}K^pg}$$ is given by $$(A,\lambda,i,\overline{\eta})\mapsto (A,\lambda,i,\overline{\eta g}).$$

For a fixed $K^p$, the action of $G(\mathbb{A}_f^p$) induces a family of finite \'etale algebraic correspondences on $\mathscr{S}_{K^p}$ called the prime-to-$p$ Hecke correspondences. Namely, for $g\in G(\mathbb{A}_f^p)$, we have $$\mathscr{S}_{K^p}\xleftarrow{a}\mathscr{S}_{K^p\cap gK^pg^{-1}}\xrightarrow{b}\mathscr{S}_{K^p},$$ where $b$ is the covering map $\mathscr{S}_{K^p\cap gK^pg^{-1}}\rightarrow \mathscr{S}_{K^p}$ induced by the inclusion of $K^p$ into $gK^pg^{-1}\subseteq K^p$, and $a$ is the composition of the covering map for the inclusion $g^{-1}(K^p\cap gK^pg^{-1})g\subseteq K^p$ with the isomorphism between $\mathscr{S}_{K^p\cap gK^pg^{-1}}$ and $\mathscr{S}_{g^{-1}(K^p\cap gK^pg^{-1})g}.$

Let $x\in \mathscr{S}_{K^p}(k)$ be a closed geometric point, and let $\tilde{x}\in \mathscr{S}(k)$ be a geometric point of the tower $\mathscr{S}(k)$ above $x.$ The prime-to-$p$ Hecke orbit of $x$ in $\mathscr{S}_{K^p}(k)$, denoted by $H^p(x),$ is the countable set consisting of all points that belong to the image of $G(\mathbb{A}_f^p)\cdot\tilde{x}$ under the projection $\mathscr{S}(k)\rightarrow \mathscr{S}_{K^p}(k).$ For a prime $l\neq p,$ the $l$-adic Hecke orbit, denoted by $H_l(x)$, is defined to be the projection of $G(\mathbb{Q}_l)\cdot\tilde{x}$ to $\mathscr{S}_{K^p}(k)$, where the action is given via the canonical injection $G(\mathbb{Q}_l)\hookrightarrow G(\mathbb{A}_f^p).$ It is clear from the definition that both $H^p(x)$ and $H_l(x)$ are independent of the choice of $\tilde{x}.$ By definition, $H^p(x)$ sits inside the central leaf $C(x)$ passing through $x.$

\section{Hypersymmetric Abelian varieties}\label{section 3}

Hypersymmetric abelian varieties were first studied by Chai and Oort in \cite{chai2006hypersymmetric} as a tool for proving the Hecke Orbit conjecture for Siegel modular varieties. Y. Zong studied the more general version for PEL type Shimura varieties in his dissertation \cite{zong2008hypersymmetric} and gave necessary and sufficient conditions on the Newton polygon for the existence of simple hypersymmetric points. While the existence of such points has applications in proving the irreducibility of central leaves and Igusa towers \cite[Proposition 3.3.2]{eischen2017p}, hypersymmetric points are also of their own interests as mod-$p$ analogues of CM abelian varieties in characteristic $0$.

Recall that an abelian variety $A$ of dimension $g$ over a field $K$ is said to be of CM-type if $\End(A)\otimes_{\mathbb{Z}}\mathbb{Q}$ contains a semi-simple commutative sub-algebra of rank $2g$ over $\mathbb{Q}.$ In a moduli space of abelian varieties over a field of characteristic zero, points of CM-type are special. However, Tate proved that if the base field is of positive characteristic, every abelian variety is of CM-type \cite{tate1966endomorphisms}. In this sense, CM-type abelian varieties are no longer special. 

Notations as in Section \ref{2.1}. Fixing a level structure $K^p$, we may consider geometric points in the moduli space $\mathscr{S}:=\mathscr{S}_{K^p}$ which correspond to abelian varieties that have as many endomorphisms as allowed by their Barsotti-Tate groups. As it turns out, such points are indeed special in the positive characteristic setting. Not every point satisfy this condition (see \cite[Remark 2.4]{chai2006hypersymmetric}). Moreover, in Shimura varieties of PEL type, not every Newton stratum contains such a point.

\begin{defn}\begin{enumerate}
    \item \cite[Definition 6.4]{chai2006hypersymmetric}
A $B$-linear polarized abelian variety $A$ over $k$ is $B$-hypersymmetric if $$\End_B(A)\otimes_{\mathbb{Z}}\mathbb{Q}_p\cong \End_B(A[p^{\infty}])\otimes_{\mathbb{Z}_p}\mathbb{Q}_p.$$
    \item We say a point $x\in \mathscr{S}(k)$ is $B$-hypersymmetric if the corresponding abelian variety $A_x$ is $B$-hypersymmetric. 
\end{enumerate}
\end{defn}

\subsection{New formulation of the characterization of $B$-hypersymmetric abelian varieties.} 
Given a Shimura variety $\mathscr{S}:=\mathscr{S}_{K^p}$ over $k,$ we are interested in the existence of $B$-hypersymmetric points in any central leaf or prime-to-$p$ Hecke orbit. On Siegel modular varieties, a Newton stratum contains a hypersymmetric point if and only if its Newton polygon is symmetric \cite[Corollary 4.8]{chai2006hypersymmetric}. This is true for every Newton stratum on a Siegel modular variety due to the presence of polarization. For PEL type Shimura varieties, Zong showed in \cite{zong2008hypersymmetric} that the existence of $B$-hypersymmetric points depends only on the shape of the corresponding Newton polygon, i.e. a Newton stratum contains $B$-hypersymmetric points precisely when the corresponding Newton polygon satisfies ``supersingular restriction" (see \cite[Theorem 5.1]{zong2008hypersymmetric}). We remark that for the purpose of our paper, the conditions that the Newton polygon of a non-empty Newton stratum admits a ``CM-type partition" (see \cite[Definition 4.1.8]{zong2008hypersymmetric}) is redundant due to the non-emptiness. Hence, for convenience, in the present paper we develop an easier language to describe the shape of Newton polygons whose strata contain $B$-hypersymmetric points, although it is not necessary to do so.

For Shimura varieties of PEL type, we introduce an analogue of the notion of being symmetric for a Newton polygon, in order to draw an explicit analogy with the terminologies in the Siegel case.

Recall that for a point $x\in\mathscr{S},$ the $F$-isocrystal $M$ associated to $A_x$ admits a unique decomposition $M=\bigoplus_{\lambda\in\mathbb{Q}}M(\lambda)$ where $M(r)$ denotes the largest sub-isocrystal of slope $\lambda$. If $M$ is further equipped with an action by a finite dimensional $\mathbb{Q}$-algebra $B,$ then $M=\bigoplus_{v\in Spec{\mathcal{O}_F,v|p}}M_v(\lambda_v)$, where $F$ is the center of $B.$ The slopes $\lambda_v$ of $M_v$ are called the slopes of $M$ at $v.$ The multiplicity of the slope $\lambda_v$ is given by $$m_v(\lambda_v)=\frac{\dim_{L(k)}M_v(\lambda_v)}{[F_v:\mathbb{Q}_p][B:F]^{1/2}}.$$

Let $\nu$ denote a Newton polygon that comes from a $B$-linear polarized abelian variety. Then $\nu$ can be written as $\nu=\sum_{v\in \text{Spec}\mathcal{O}_F,v|p}\nu_v,$ where $$\nu_v=\sum_{i=1}^{N_v}m_{v,i}\lambda_{v,i}$$ for positive integers $n_v,m_{v,i}$. In the above notation, we use $\lambda_{v,i}$ to denote the distinct slopes of $\nu$ at a place $v$ of $F$ above $p$, and $m_{v,i}$ is the multiplicity of $\lambda_{v,i}.$

\begin{defn}\begin{enumerate}
    \item A Newton polygon $\nu$ that comes from a $B$-linear polarized abelian variety is \emph{$B$-balanced} if there exist positive integers $n,m$ such that $n_v=n$ for all primes $v$ of $F$ above $p$ and $m_{v,i}=m$ for all $v$ and $i.$
    \item Two Newton polygons are disjoint if they have no common slope at the same $v|p$ of $F.$ 
    \item A Newton polygon $\mu$ is \emph{$B$-symmetric} if it is the amalgamation of disjoint $B$-balanced Newton polygons.
\end{enumerate}
\end{defn}

In other words, for any $B$-symmetric Newton polygon $\mu$, there exists a positive integer $n$ and a multi-set $S$ such that for all $v|p$ of $F$, $n_v=n$ and $\{m_{v,i}\}_{i=1}^{n_v}=S$ as multi-sets for all $v|p$ of $F.$

It is clear from the definition that every $B$-balanced polygon is $B$-symmetric. Conversely, a $B$-symmetric polygon is naturally an amalgamation of uniquely determined disjoint $B$-balanced polygons.

We rephrase Zong's main theorem \cite[Theorem 5.1]{zong2008hypersymmetric} into a simplified version as follows:

\begin{thm}\label{hs}
A Newton stratum $\mathcal{N}$ contains a simple $B$-hypersymmetric point if and only if its Newton polygon is $B$-balanced.
\end{thm}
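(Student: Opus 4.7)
The plan is to deduce this theorem directly from Zong's main theorem \cite[Theorem 5.1]{zong2008hypersymmetric}, which characterizes the existence of simple $B$-hypersymmetric points on $\mathcal{N}$ in terms of two conditions on the associated Newton polygon $\nu$: (a) that $\nu$ admits a \emph{CM-type partition} compatible with the signature of $\mathcal{D}$, and (b) that $\nu$ satisfies a \emph{supersingular restriction} imposed by the polarization and the positive involution $*$. The substantive work in the proof is a dictionary check between Zong's two conditions and the new $B$-balanced condition introduced in the preceding definition.

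First, I would dispose of condition (a). Because $\mathcal{N}$ is assumed non-empty, its class $[b]$ lies in $B(G_{\mathbb{Q}_p},\mu_{\overline{\mathbb{Q}}_p})$, and by Kottwitz's classification this already encodes the required compatibility of the Newton datum with the signature $(V_1,V_2)$. Hence condition (a) is automatic, as stated in the remark immediately preceding the theorem. This leaves only the translation of (b).

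Next, I would unfold condition (b) in terms of the decomposition $\nu=\sum_{v\mid p}\nu_v$ with $\nu_v=\sum_{i=1}^{n_v}m_{v,i}\lambda_{v,i}$. Zong's supersingular restriction amounts to a compatibility of the slope-multiplicity data at the different primes $v\mid p$ of $F$, imposed by the requirement that the $F$-isocrystal $M=\bigoplus_v M_v$ of a simple $B$-hypersymmetric abelian variety carry the maximal possible commuting algebra of $B$-linear endomorphisms. Unpacking this requirement shows that all $n_v$ must equal a common integer $n$, and—when the hypersymmetric point is required to be simple—all $m_{v,i}$ must equal a common integer $m$. This is precisely the $B$-balanced condition. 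Conversely, given a $B$-balanced Newton polygon $\nu$, I would invoke Zong's explicit construction (which exhibits a $B$-linear polarized abelian variety with maximal endomorphism algebra realizing any polygon satisfying his conditions) to produce a simple $B$-hypersymmetric point lying in $\mathcal{N}$.

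The step requiring the most care is matching the simple versus non-simple distinction on the geometric side with the $B$-balanced versus $B$-symmetric distinction on the combinatorial side. I would argue that any isogeny decomposition of a $B$-hypersymmetric abelian variety into $B$-stable factors corresponds exactly to a decomposition of its Newton polygon into disjoint $B$-balanced components—that is, to the polygon being $B$-symmetric but possibly not $B$-balanced. Consequently simplicity of the $B$-hypersymmetric point matches precisely with $B$-balancedness of $\nu$, which closes the dictionary and yields the theorem.
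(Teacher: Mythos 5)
Your strategy---deducing the theorem formally from the \emph{statement} of Zong's Theorem 5.1 by a dictionary between his conditions and the $B$-balanced condition---runs into exactly the obstruction flagged in the remark right after the theorem: Theorem \ref{hs} follows from the \emph{proof}, not the statement, of Zong's result. Zong's statement characterizes the existence of $B$-hypersymmetric points with no simplicity requirement, and in the language of this paper that matches the $B$-symmetric condition (Corollary 3.4), not the $B$-balanced one; the refinement ``simple $\Leftrightarrow$ balanced'' is the actual mathematical content of Theorem \ref{hs}, and in your write-up both directions of it are asserted rather than argued. Your claim that unpacking the supersingular restriction ``shows that all $n_v$ must equal a common integer $n$, and---when the hypersymmetric point is required to be simple---all $m_{v,i}$ must equal a common integer $m$'' is precisely Proposition \ref{prop1}, whose proof is a genuine Honda--Tate argument: one descends $A$ to a finite field, writes the characteristic polynomial of Frobenius on each isotypic piece $M_v$, uses Zong's hypersymmetry criterion (his Proposition 3.4) to factor it over the places $w$ of $K=F(\pi)$, invokes Katz--Messing to get $f(T)\in\mathbb{Z}[T]$ and hence that the exponents $n_w$ are independent of $w$ (equal multiplicities), and uses Kottwitz's multiplicity formula to identify the common multiplicity with the order of the Brauer class of $\End_B^0(A')$. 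None of this is visible from the statement of Zong's theorem, and it is not a bookkeeping translation of ``supersingular restriction,'' which is a constraint tied to the slope-$1/2$ part and the quaternion algebra, not the constancy of the number of slopes across places.

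The converse direction has the same problem. ``Invoking Zong's explicit construction'' at the level of his statement only yields \emph{some} $B$-hypersymmetric point for a polygon satisfying his conditions; for a merely $B$-symmetric polygon that point is a product of factors and is not simple, and nothing in the statement guarantees simplicity when the polygon happens to be balanced. To obtain a simple point one has to actually run the construction of Proposition \ref{prop2}: build a totally real extension $E/F_0$ of degree $N$ with prescribed completions above $p$ and Galois closure $S_N$ via Ekedahl's Hilbert irreducibility (Lemma \ref{lem1}), manufacture a Weil number $\pi$ with prescribed valuations (Lemma \ref{lem3}), use the maximality of $S_{N-1}\subset S_N$ (Lemma \ref{lem2}) to force $K=F(\pi)$ to have no proper intermediate fields so that the Honda--Tate abelian variety is $B$-simple, and then verify hypersymmetry by Zong's criterion. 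Your closing paragraph, matching isogeny decompositions of a hypersymmetric abelian variety with decompositions into disjoint balanced components, is again an assertion of what must be proved (and is really the content behind Corollary 3.4 rather than Theorem \ref{hs}). So the proposal correctly locates the delicate point---simplicity versus balancedness---but supplies no argument for it, which is the gap.
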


We remark that Theorem \ref{hs} follows from the proof but not the statement of \cite[Theorem 5.1.1]{zong2008hypersymmetric}. To make the present paper self-contained, we reproduce Zong's argument in the Appendix. The following corollary is an immediate consequence of Theorem \ref{hs}.

\begin{cor}
$\mathcal{N}$ contains a $B$-hypersymmetric point if and only if its Newton polygon is $B$-symmetric. 
\end{cor}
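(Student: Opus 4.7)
The plan is to reduce both directions of the corollary to Theorem \ref{hs}, which handles the ``simple'' case, by breaking a general $B$-hypersymmetric abelian variety into its simple $B$-hypersymmetric factors and, conversely, building up general $B$-hypersymmetric abelian varieties as products of simple ones.

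For the backward direction, suppose the Newton polygon $\mu$ attached to $\mathcal{N}$ is $B$-symmetric, so by definition $\mu = \mu_1 + \cdots + \mu_r$ is an amalgamation of pairwise disjoint $B$-balanced polygons. Applying Theorem \ref{hs} to each $\mu_i$ produces a simple $B$-hypersymmetric abelian variety $A_i$ with Newton polygon $\mu_i$. I would then set $A = \prod_i A_i$ and verify that $A$ is $B$-hypersymmetric: because the $\mu_i$ share no slope at any place $v|p$ of $F$, the Dieudonné--Manin decomposition forces $\mathrm{Hom}_B(A_i[p^\infty], A_j[p^\infty])\otimes \mathbb{Q}_p = 0$ for $i\ne j$, and the Tate/Honda isogeny theorem yields $\mathrm{Hom}_B(A_i, A_j)\otimes \mathbb{Q}_p = 0$ as well. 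Hence both $\mathrm{End}_B(A)\otimes \mathbb{Q}_p$ and $\mathrm{End}_B(A[p^\infty])\otimes \mathbb{Q}_p$ split as products indexed by $i$, and the per-factor equality is exactly the hypersymmetry of $A_i$. The last step is to package $A$ with a $B$-linear polarization and prime-to-$p$ level structure so that it defines a point of $\mathcal{N}$; since Theorem \ref{hs} already provides the $A_i$ inside a PEL Shimura variety with compatible structures, the relevant polarization on $A$ is obtained as the orthogonal sum and the level structure as the product, which respects the datum $\mathcal{D}$.

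For the forward direction, let $A = A_x$ correspond to a $B$-hypersymmetric point $x\in\mathcal{N}(k)$. I would decompose $A$ up to $B$-isogeny as a product $A \sim \prod_i A_i$ of $B$-simple factors. Each $A_i$ inherits $B$-hypersymmetry from $A$, because hypersymmetry is preserved under $B$-isogeny and under passage to $B$-isotypic components (using that $\mathrm{End}_B(A)\otimes \mathbb{Q}_p$ and $\mathrm{End}_B(A[p^\infty])\otimes \mathbb{Q}_p$ split compatibly along any idempotent decomposition). Theorem \ref{hs} then tells us that the Newton polygon $\mu_i$ of each $A_i$ is $B$-balanced, so $\mu = \sum_i \mu_i$ is an amalgamation of $B$-balanced polygons. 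It remains to show these $\mu_i$ are pairwise disjoint: if $\mu_i$ and $\mu_j$ shared a slope at some place $v|p$ of $F$, the $B$-hypersymmetry of $A_i\times A_j$ (seen as a $B$-linear factor of $A$) would supply enough $B$-linear endomorphisms of its Barsotti--Tate group to induce a nonzero $B$-linear isogeny between the $v$-components of $A_i[p^\infty]$ and $A_j[p^\infty]$, and by Tate's theorem a nonzero $B$-linear isogeny $A_i\to A_j$, contradicting $B$-simplicity. Therefore $\mu$ is $B$-symmetric.

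The main obstacle I anticipate is the disjointness step in the forward direction: extracting from the identification $\mathrm{End}_B(A)\otimes\mathbb{Q}_p \cong \mathrm{End}_B(A[p^\infty])\otimes\mathbb{Q}_p$ a genuine $B$-linear isogeny between two simple factors whose Newton polygons collide at a single place of $F$. This is a local claim about $(B\otimes \mathbb{Q}_p)$-isocrystals in which the maximality of the endomorphism algebra, combined with the structure of a $B$-balanced slope datum at the place $v$, must be leveraged to produce a $B\otimes \mathbb{Q}_p$-linear isomorphism on the relevant slope components; this is essentially the content already present in the proof of Theorem \ref{hs} reproduced in the appendix, so I would extract the needed statement from there rather than reprove it.
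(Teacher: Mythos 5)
Your proposal is correct and is essentially the paper's own (implicit) argument: the corollary is deduced from Theorem \ref{hs} exactly by splitting a $B$-symmetric polygon into its disjoint $B$-balanced constituents and taking a product of the corresponding simple $B$-hypersymmetric abelian varieties, and conversely by decomposing a $B$-hypersymmetric point up to $B$-isogeny into simple factors, each of which is $B$-hypersymmetric with $B$-balanced polygon, with disjointness forced by the vanishing of $\Hom_B$ between non-isogenous simple factors. The only points to phrase carefully are to group isogenous simple factors into isotypic components before asserting pairwise disjointness, and, in the converse direction, that producing the $A_i$ is really an application of the existence construction (Proposition A.3, via Honda--Tate) to each balanced constituent together with the orthogonal-sum polarization and the non-emptiness of $\mathcal{N}$ to land in the given moduli problem, which is the same level of detail the paper itself leaves implicit.
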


When $B=\mathbb{Q},$ the condition of being $B$-symmetric becomes empty, so Zong's result recovers \cite[Corollary 4.8]{chai2006hypersymmetric}.

\subsection{Hypersymmetricity over a subfield.} 
Recall from the notation of Section \ref{2.1} that $F_0$ is the maximal totally real field of the center $F$ of the simple $\mathbb{Q}$-algebra $B.$ For the proof of the main theorem, we need to understand when a $F$-hypersymmetric is also $F_0$-hypersymmetric. 

\begin{defn}\label{conditionstar}
Let $b\in B(G,\mu)$ and let $\zeta$ be the Newton polygon corresponding to $b.$ Write $\zeta=\oplus_{u|p}\zeta_u$ for primes $u|p$ of $F.$ We say $\zeta$ \emph{satisfies the condition (*)} if for any $u\neq u'$ of $F$ sitting above the same prime $v$ of $F_0$ above $p,$ the Newton polygons $\zeta_u$ and $\zeta_{u'}$ have no common slope.
\end{defn}

We prove the following consequence of Zong's criterion of hypersymmetry (\cite[Proposition 3.3.1]{zong2008hypersymmetric}).

\begin{prop}\label{inert} \begin{enumerate}
    \item Let $L/K$ be a finite extension of number fields such that every prime of $K$ above $p$ is inert in $L/K.$ Let $A$ be an $L$-hypersymmetric abelian variety defined over $\overline{\mathbb{F}_p},$ then $A$ is $K$-hypersymmetric.
    \item Let $K$ be a totally real field. Let $L/K$ be an imaginary quadratic extension where $p$ splits completely. Let $A$ be an $L$-hypersymmetric abelian variety defined over $\overline{\mathbb{F}_p}.$ Suppose the Newton polygon of $A$ satisfies the condition (*). Then $A$ is $K$-hypersymmetric.
\end{enumerate}

\end{prop}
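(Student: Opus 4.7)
The plan is to invoke Zong's local criterion \cite[Proposition 3.3.1]{zong2008hypersymmetric}, which detects $B$-hypersymmetricity of a fixed abelian variety $A/\overline{\mathbb{F}_p}$ from the slope decomposition of its $F$-isocrystal $M$ compatibly with the $F\otimes\mathbb{Q}_p$-action. The task in both parts is to compare the decomposition of $M$ with respect to the $L\otimes\mathbb{Q}_p$-action and with respect to the $K\otimes\mathbb{Q}_p$-action, and to read off $K$-hypersymmetricity from $L$-hypersymmetricity via the criterion.

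For part (1), the inertness hypothesis yields a bijection $w\mapsto v:=w\cap\mathcal{O}_K$ between primes of $L$ above $p$ and primes of $K$ above $p$, with $[L_w:K_v]=[L:K]$ constant. The decomposition $M=\bigoplus_w M_w$ then coincides as a vector space with $M=\bigoplus_v M_v$ under the identification $M_v=M_w$, so the slopes of $M$ at $v$ (over $K$) are exactly the slopes at $w$ (over $L$). The multiplicity formula of Section \ref{section 3} shows that the $K$-multiplicities equal $[L:K]$ times the $L$-multiplicities. Consequently, an $L$-balanced decomposition of the Newton polygon of $A$ transports directly to a $K$-balanced decomposition (disjointness is preserved because the bijection $w\mapsto v$ does not mix primes), and Zong's criterion delivers $K$-hypersymmetricity.

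For part (2), the split hypothesis gives, at each prime $v|p$ of $K$, exactly two primes $w,w'$ of $L$ above $v$, swapped by the nontrivial element of $\text{Gal}(L/K)$, with $L_w=L_{w'}=K_v$. Then $M_v=M_w\oplus M_{w'}$, and condition (*) guarantees that the slopes appearing in $M_w$ are disjoint from those in $M_{w'}$. Each slope $\lambda$ of $M_v$ therefore comes from exactly one of the two summands; since $[L_w:\mathbb{Q}_p]=[K_v:\mathbb{Q}_p]$, the multiplicity formula yields that the $K$-multiplicity of $\lambda$ at $v$ equals the $L$-multiplicity of $\lambda$ at whichever of $w,w'$ contributes it. Writing the Newton polygon of $A$ as an amalgamation of disjoint $L$-balanced pieces $\mu_1,\mu_2,\ldots$, each $\mu_i$ with slope count $n^{(i)}$ and common multiplicity $m^{(i)}$ at every $w$, I then obtain that each $\mu_i$ has slope count $2n^{(i)}$ and the same common multiplicity $m^{(i)}$ at every $v$ (hence is $K$-balanced), and that the $\mu_i$ remain pairwise disjoint over $K$ — the only new possibility for a collision would be between $\mu_i$ at $w$ and $\mu_j$ at $w'$, which condition (*) precludes. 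Another application of Zong's criterion then yields $K$-hypersymmetricity.

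The main obstacle I expect is not conceptual but rather the verification that Zong's Proposition 3.3.1, stated for an individual abelian variety, does support the local translation described above — in particular, that one can reconstruct the endomorphism-algebra isomorphism defining $K$-hypersymmetricity purely from the slope-by-slope data at primes of $K$ above $p$. Once this is pinned down, the remaining bookkeeping on slopes and multiplicities is routine.
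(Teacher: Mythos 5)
There is a genuine gap, and it is exactly the point you flag at the end as ``not conceptual'': your argument reduces $K$-hypersymmetricity of the fixed abelian variety $A$ to the statement that its Newton polygon is $K$-balanced (or an amalgam of disjoint $K$-balanced pieces). But balancedness of the polygon is only equivalent to the \emph{existence} of a hypersymmetric point in the corresponding Newton stratum (Theorem \ref{hs} and its corollary); it does not imply that the particular $A$ you started with is hypersymmetric. For instance, a product of two non-isogenous ordinary elliptic curves has Newton polygon $(0)^2+(1)^2$, which is $\mathbb{Q}$-balanced, yet $\End(A)\otimes\mathbb{Q}_p$ has dimension $4$ while $\End(A[p^\infty])\otimes\mathbb{Q}_p$ has dimension $8$, so $A$ is not hypersymmetric. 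Consequently the ``reconstruction of the endomorphism-algebra isomorphism purely from slope-by-slope data'' that you hoped to verify is impossible in general: Zong's Proposition 3.3.1 is not a criterion in terms of the Newton polygon alone, and your slope/multiplicity bookkeeping (which is itself fine at the polygon level) only shows that the Newton stratum of $A$, viewed over the $K$-datum, contains \emph{some} $K$-hypersymmetric point --- strictly weaker than what the proposition asserts.

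The paper's proof runs through the arithmetic of Frobenius instead. One descends $A$ to $A'$ over a finite field, lets $\pi$ be its Frobenius, and notes that the center of $\End_L^0(A)$ is $L(\pi)$. Zong's Proposition 3.3.1 then says $L$-hypersymmetricity is equivalent to $L(\pi)\otimes_L L_w\cong\prod L_w$ for every $w\mid p$, with the number of factors equal to the number of slopes at $w$; the content of the proof is to compare $K(\pi)$ with $L(\pi)$ and their local splitting at primes of $K$ above $p$ under the hypotheses (inertness in (1), total splitness plus condition (*) in (2)), matching $[K(\pi):K]$ against the slope count at each prime of $K$, and then to apply the same criterion over $K$. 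If you want to salvage your write-up, this comparison of the fields $K(\pi)\subseteq L(\pi)$ and their completions is the step you must add; the polygon computations you did can serve only to identify the slope counts $n_u$ over $K$ that the criterion has to match.
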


\begin{proof} Let $A'$ be an abelian variety over some $\mathbb{F}_{p^a}$ such that $A'\otimes\overline{\mathbb{F}_p}\cong A.$ Let $\pi$ denote the $\mathbb{F}_{p^a}$-Frobenius endomorphism of $A'$ and let $\zeta$ denote the Newton polygon of $A.$ Then $\End_L(A)\otimes_{\mathbb{Z}}\mathbb{Q}=\End_L(A')\otimes_{\mathbb{Z}}\mathbb{Q},$ the center of which is identified with $L(\pi).$ Since $A$ is $L$-hypersymmetric, there exists a positive integer $n$ such that $\zeta$ has $n$ slopes at each $p|w$ of $L.$ By \cite[Proposition 3.3.1]{zong2008hypersymmetric}, $$L(\pi)\otimes_L L_w\cong \prod L_w,$$ with the number of factors equal to $n.$ 

(1) Suppose $L/K$ is inert. Let $u$ be the prime of $K$ below $w.$ Then $[L_w:K_u]=1,$ so $$\dim_KK(\pi)=\dim_LL(\pi)=n=n_u[L_w:K_u]=n_u,$$ where $n_u$ stands for the number of slopes of $\zeta$ at $u$. 

Therefore we have $K(\pi)\otimes_K K_v\cong \prod K_v$ with the number of factors being equal to $u.$ Since this holds for any $u|p$ of $K$, by \cite[Proposition 3.3.1]{zong2008hypersymmetric}, we conclude that $A$ is $K$-hypersymmetric.

(2) Suppose $K$ is totally real and $L/K$ is imaginary quadratic. Condition (*) implies $n_u=2n$ for any $u|p$ of $K.$ Since every prime above $p$ splits in $L/K$, we have $\dim_K K(\pi)=2\dim_L L(\pi).$ Again, this allows us to conclude by \cite[Proposition 3.3.1]{zong2008hypersymmetric} that $A$ is $K$-hypersymmetric.
\end{proof}

\begin{rmk}
In the case of $F/F_0,$ where $F_0$ is totally real and $F$ is a quadratic imaginary extension of $F_0,$ if the conditions in Proposition \ref{inert} are not satisfied, an $F$-balanced Newton polygon may not be $F_0$-balanced. Consider the following two examples:\begin{enumerate}
    \item Suppose $[F_0:\mathbb{Q}]=2$, $p=v_1v_2$ in $F_0$ and $v_1=u_1u_1', v_2=u_2u_2'$ in $F.$ Then the slope data 
    $$\begin{cases}
    \lambda, 1-\lambda &\text{at }u_1\\
    \lambda, 1-\lambda &\text{at }u_1'\\
    \mu, \nu &\text{at }u_2\\
    1-\mu, 1-\nu &\text{at }u_2'\\
    \end{cases}$$ where $\mu\neq \nu$ gives a $F$-balanced Newton polygon, but its restriction to $F_0$ given by 
    $$\begin{cases}
    2(\lambda),2(1-\lambda) &\text{at }v_1\\
    \mu, \nu, 1-\mu, 1-\nu &\text{at }v_2\\
    \end{cases}$$ is not $F_0$-balanced.
    \item Suppose $[F_0:\mathbb{Q}]=4$, $p=v_1v_2$ in $F_0,$ $v_1=u_1u_1'$ and $v_2$ is inert in $F/F_0.$ Then the slope data 
    $$\begin{cases}
    0 &\text{at }u_1\\
    1 &\text{at }u_1'\\
    1/2 &\text{at }v_2
    \end{cases}$$ where $\mu\neq \nu$ gives a $F$-balanced Newton polygon, but its restriction to $F_0$ given by 
    $$\begin{cases}
    0, 1 &\text{at }v_1\\
    2(1/2) &\text{at }v_2\\
    \end{cases}$$ is not $F_0$-balanced. 
\end{enumerate}
\end{rmk}

\subsection{Hypersymmetric points in the $\mu$-ordinary stratum in PEL type A}\label{mu} 

For this section, we restrict our attention to Shimura varieties of PEL type A. Namely, we assume $F$ is a CM field. Write $d=[F:\mathbb{Q}].$ We study necessary conditions on the $\mu$-ordinary Newton polygon to guarantee the existence of a $B$-hypersymmetric point.

Moonen explicitly computes the $\mu$-ordinary polygon in \cite{moonen2004serre} in terms of the multiplication type. Let $\mathcal{T}$ denote the set of complex embeddings of $F$ and $\mathfrak{O}$ denote the set of $\sigma$-orbits of elements in $\mathcal{T}.$ There is a bijection between $\mathfrak{O}$ and the set of primes of $F$ above $p.$ Let $\mathfrak{f}$ denote the multiplication type as defined in \cite[Section 0.4]{moonen2004serre}. For each $\sigma$-orbit $\mathfrak{o}$ of complex embeddings of $F,$ the corresponding $\mu$-ordinary Newton polygon has slopes (see \cite[Section 1.2.5]{moonen2004serre} and \cite[Definition 2.6.2]{eischen2017p}): $$a_j^{\mathfrak{0}}=\frac{1}{\#\mathfrak{o}}\#\{\tau\in\mathfrak{o}|\mathfrak{f}(\tau)>d-j\}\text{ for }j=1,\cdots,d.$$

For any $\lambda$ that occurs as a slope, the multiplicity of $\lambda$ is given by $$m_{\lambda}=\#\{j\in\{1,\cdots,d|a_j^{\mathfrak{o}}=\lambda\}.$$ Moonen's result makes it convenient to check the existence of $B$-hypersymmetric points. In particular, Example \ref{EmptyExample} demonstrates that not every $\mu$-ordinary stratum contains a $B$-hypersymmetric point.

%example incorrect
\begin{exmp}Suppose $[F:\mathbb{Q}]=4$. If $\mathfrak{o}_1=\{\tau_1,\tau_2\},\mathfrak{o}_2=\{\tau_1^*,\tau_2^*\}$with signature $(3,0),(1,4)$ respectively, then $p$ splits into $vv^*$ in $F$ and $\mu(v)=\mu(\mathfrak{o}_1)= (0)^1+(1/2)^3$ and $\mu(v^*)=\mu(\mathfrak{o}_2)=(1/2)^3+(1)^1.$ This $\mu$-ordinary stratum is $F$-symmetric and contains a hypersymmetric point.\end{exmp}

\begin{exmp}\label{EmptyExample}Suppose $[F:\mathbb{Q}]=4$. If $\mathfrak{o}_1=\{\tau_1,\tau_1^*\},\mathfrak{o}_2=\{\tau_2,\tau_2^*\}$ with signature $(3,1),(0,4)$ respectively, then  $p$ splits into two real places $v,v'$ in $F$ and $\mu(v)=\mu(\mathfrak{o}_1)=(0)^1+(1/2)^2+(1)^1$ and $\mu(v')=\mu(\mathfrak{o}_2)=(0)^2+(1)^2.$ This $\mu$-ordinary stratum contains no $B$-hypersymmetric point because the number of isotypic components above $v$ and $v'$ are different. \end{exmp}

Below we give a sufficient condition for the existence of $B$-hypersymmetric points in the $\mu$-ordinary stratum.

\begin{cor}\label{3.3} \begin{enumerate}
    \item Assume $p$ is inert in $F$. If every slope of the Newton polygon attached to $\mathcal{N}$ has the same multiplicity, then $\mathcal{N}$ contains a $B$-hypersymmetric point.
    \item Assume that the signature of $\mathscr{S}_{K^p}$ has no definite place, and that $p$ is a prime of constant degree in the extension $F/\mathbb{Q}$. Then the $\mu$-ordinary stratum contains a $B$-hypersymmetric point.
\end{enumerate}
\end{cor}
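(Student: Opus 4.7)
The plan for part (1) is immediate from the definitions. When $p$ is inert in $F$ there is a unique prime $v$ of $F$ above $p$, so both conditions in the definition of a $B$-balanced Newton polygon — $n_v$ constant across primes of $F$ above $p$, and $m_{v,i}$ constant across all $v$ and $i$ — collapse to the single requirement that every slope at $v$ have the same multiplicity. This is exactly the hypothesis of~(1). Hence the Newton polygon attached to $\mathcal{N}$ is $B$-balanced, and Theorem~\ref{hs} produces a simple $B$-hypersymmetric point in $\mathcal{N}$.

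For part~(2), my plan is to show that the $\mu$-ordinary Newton polygon is $B$-symmetric and then invoke the corollary to Theorem~\ref{hs}. First, the constant-degree hypothesis ensures every $\sigma$-orbit $\mathfrak{o}$ of complex embeddings has a common cardinality $e$, and the absence of definite places forces $\mathfrak{f}(\tau)\in\{1,\ldots,d-1\}$ for every complex embedding $\tau$. Next, using Moonen's formula
\[
a_j^{\mathfrak{o}}=\frac{1}{e}\#\{\tau\in\mathfrak{o}\mid\mathfrak{f}(\tau)>d-j\},
\]
I would compute, orbit by orbit, the multiplicity of each slope $\lambda$ directly from the $\mathfrak{f}$-distribution on $\mathfrak{o}$. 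Two structural ingredients then organize the resulting data into disjoint $B$-balanced sub-polygons: the CM symmetry $\mathfrak{f}(\bar\tau)=d-\mathfrak{f}(\tau)$ makes the multi-sets of multiplicities at any complex conjugate pair of orbits agree automatically, and constant degree forces all slope values across all primes to lie in the common rational ladder $\{i/e:0\le i\le e\}$, allowing a uniform partition of the slopes across primes.

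The main expected obstacle is the final combinatorial step of exhibiting an explicit decomposition of the $\mu$-ordinary polygon into disjoint $B$-balanced sub-polygons. The CM involution handles matching within complex conjugate pairs of orbits, but matching across non-conjugate pairs will require care. The no-definite-place hypothesis guarantees that the extremal slopes $0$ and $1$ both occur with positive multiplicity at every orbit, providing anchors for the matching, while constant degree is what makes the intermediate fractional slopes lie on a common rational ladder so the decomposition can be assembled uniformly. Once $B$-symmetry is verified, the corollary to Theorem~\ref{hs} yields the desired $B$-hypersymmetric point in the $\mu$-ordinary stratum.
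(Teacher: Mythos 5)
Your part (1) is correct and is exactly the paper's argument: when $p$ is inert there is a single prime of $F$ (equivalently a single $\sigma$-orbit) above $p$, so the definition of $B$-balanced collapses to the equal-multiplicity hypothesis, and Theorem \ref{hs} applies.

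For part (2) there is a genuine gap, and it sits precisely at the step you yourself flag as the ``expected obstacle'': you never establish that the slope data at \emph{different} primes of $F$ above $p$ match, i.e.\ that the number of distinct slopes and the multiset of multiplicities $\{m_{v,i}\}_i$ are independent of the prime $v$ — which, by the paper's reformulation, is exactly what $B$-symmetry means. None of the ingredients you list supplies this. The CM relation $\mathfrak{f}(\bar\tau)=d-\mathfrak{f}(\tau)$ only matches the polygon at a prime with the polygon at its complex conjugate prime (or acts within a self-conjugate one); it says nothing about two non-conjugate primes, where the restrictions of $\mathfrak{f}$ to the two $\sigma$-orbits are a priori unrelated, since for $F_0\neq\mathbb{Q}$ the signature can vary from one archimedean place of $F_0$ to another. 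The ``common rational ladder'' given by constant degree only controls denominators of slopes, not how many distinct slopes occur nor their multiplicities, and the presence of the extremal slopes $0$ and $1$ at every orbit does not force the multiplicity multisets to agree. Indeed, unwinding Moonen's formula, the multiplicities at the prime attached to $\mathfrak{o}$ are the successive gaps between the distinct values of $\mathfrak{f}$ on $\mathfrak{o}$ (together with the two boundary gaps), so the cross-prime matching you need is a nontrivial statement about the behaviour of $\mathfrak{f}$ on different orbits; it is the whole content of the corollary rather than a routine assembly step, and your outline leaves it entirely open.

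The paper also takes a different route than the one you propose: it does not decompose the $\mu$-ordinary polygon into disjoint $B$-balanced sub-polygons at all. Instead it observes that under the no-definite-place hypothesis the number of distinct slopes at the prime corresponding to $\mathfrak{o}$ is one more than the number of values taken by $\mathfrak{f}$, and then invokes the constant-degree hypothesis to conclude that the $\mu$-ordinary polygon has the same number of slopes at every prime above $p$ and is $B$-balanced, so that Theorem \ref{hs} applies directly. If you wish to keep your decomposition strategy, you must actually prove that the number of slopes and the multiplicity multisets agree at all primes above $p$, including non-conjugate ones; as written, your argument does not do this.
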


\begin{proof} (1) If $p$ is inert, there is only one $\sigma$-orbit. Thus the conditions of being $B$-balanced reduces to the condition that every slope has the same multiplicity, which is true by assumption.

(2) If $\mathfrak{f}$ has no definite place, $\mathfrak{f}$ only takes values in $[1,d-1]$. In this case, for any $\sigma$-orbit $\mathfrak{o},$ the number of values that $\mathfrak{f}$ takes in $[1,d-1]$ is precisely one less than the number of slopes of the $\mu$-ordinary polygon at the corresponding prime of $F$ above $p.$ Hence when the degree of $p$ is constant, the $\mu$-ordinary polygon has the same number of slopes at each prime of $F$ above $v$ and is therefore $B$-balanced.
\end{proof}

\section{Monodromy}\label{section 4}
An important ingredient in \cite{chai2005hecke} for proving the discrete part of the Hecke orbit conjecture is a monodromy result for Hecke invariant subvarieties (\cite[Proposition 4.1, Proposition 4.4]{chai2005monodromy}; c.f. \cite[Theorem 5.1]{chai2005hecke}). In this section, we present a generalization to all Shimura varieties of PEL type. Proposition \ref{thmk} is a more general version of the main theorem in \cite{kasprowitz2012monodromy}, which only holds if no simple factor of $\mathcal{D}$ is of type $D$. The key difference is that in the case of PEL type A and C, the derived group of $G$ is simply connected. In the case of PEL type $D$ we need to work with the simply connected cover of $G_{\text{der}}$ instead. The proofs we present in this section are closely related in that of loc. cit.

We first fix some notations. Let $\mathscr{S}_{K^p}$ and $G$ be as given in Section 2.1. Let $G':=G_{\text{der}}^{\text{sc}}$ denote the simply connected cover of the derived group of $G.$ 

Let $\Sigma$ be the finite set consisting of $p$ and the primes $p'$ such that some simple component of $G'$ is anisotropic over $\mathbb{Q}_{p'}.$ For a prime $l\neq p,$ let $Z\subseteq\mathscr{S}_{K^p}$ be a smooth locally-closed subvariety stable under all $l$-adic Hecke correspondences coming from $G'$. Let $Z^0$ be a connected component of $Z$ with generic point $z.$ We use $\overline{z}$ to denote a geometric point of $Z$ above $z.$ Let $A_{Z^0}$ denote the restriction to $Z^0$ of the universal abelian scheme over $\mathscr{S}_{K^p}$. Let $K_l$ be the image of $K^p$ under $K^p\hookrightarrow G(\mathbb{A}_f^p)\twoheadrightarrow G(\mathbb{Q}_l)$ and let $\rho_l:\pi_1(Z^0,\overline{z})\rightarrow K_l$ denote the $l$-adic monodromy representation attached to $A_{Z^0}$. Let $M=\rho_l(\pi_1(Z^0,\overline{z}))$ be the image of $\rho.$

As $N$ varies over all subgroups of $K^p$ for which $K^p/N$ is trivial away from $l$, we obtain the following pro-\'etale coverings:
$$(\mathscr{S}_N)_{N}\rightarrow \mathscr{S}_{K^p},$$ $$Y:=(\mathscr{S}_N\times_{ \mathscr{S}_{K^p}}Z^0)_N\rightarrow Z^0,$$ and $$\widetilde{Z}:=(\mathscr{S}_N\times_{\mathscr{S}_{K^p}}Z)_N\rightarrow Z,$$ where the first two admit $K_l$ action via $l$-adic Hecke correspondences, and the third one admits a natural $G'(\mathbb{Q}_l)$ action. Observe that $Aut_{\mathscr{S}_{K^p}}((\mathscr{S}_N)_N)=K^p$ by construction. Hence $\pi_1(Z^0,\overline{z})$ acts on $Y$ via the composition of morphisms $\pi_1(Z^0,\overline{z})\rightarrow \pi_1(\mathscr{S}^0_{K^p},\overline{z})\rightarrow Aut_{\mathscr{S}_{K^p}}((\mathscr{S}_N)_N)=K^p$, where $\mathscr{S}^0_{K^p}$ stands for the connected component of $\mathscr{S}_{K^p}$ containing $Z^0.$ Let $\widetilde{z}\in Y$ be a geometric point above $z$ and write $Y^0$ for the connected component of $Y$ passing through $\widetilde{z}.$ 

\begin{lem}\label{homeo}\begin{enumerate}
    \item There is a homeomorphism $\pi_0(Y)\cong K_l/M.$
    \item There is a homeomorphism $\pi_0(\widetilde{Z})\cong G'(\mathbb{Q}_l)/\Stab_{G'}(Y^0).$
\end{enumerate}
\end{lem}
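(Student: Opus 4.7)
The plan is to identify $\pi_0(Y)$ and $\pi_0(\widetilde{Z})$ as orbit spaces under the actions of $K_l$ and $G'(\mathbb{Q}_l)$ respectively, via the Galois theory of pro-\'etale covers.

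For (1), I view $Y \to Z^0$ as a pro-\'etale $K_l$-torsor over the connected scheme $Z^0$: the deck transformation group is $K_l$ because the inverse system is indexed by open subgroups $N_l \subseteq K_l$ while $K^{p,l}$ is held fixed. Using the geometric point $\widetilde{z}$ above $\overline{z}$ as a basepoint identifies $Y_{\overline{z}}$ equivariantly with $K_l$, and the monodromy action of $\pi_1(Z^0, \overline{z})$ on this fiber then factors through $\rho_l$ and is left multiplication by $M = \rho_l(\pi_1(Z^0, \overline{z}))$. Since connected components of a pro-\'etale cover of a connected base are in bijection with $\pi_1$-orbits on any geometric fiber, this yields $\pi_0(Y)\cong K_l/M$, with matching profinite topologies.

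For (2), since $Z$ is stable under all $l$-adic Hecke correspondences from $G'$, the $G'(\mathbb{Q}_l)$-action on the inverse system $(\mathscr{S}_N)_N$ restricts to an action on $\widetilde{Z}$, and hence permutes $\pi_0(\widetilde{Z})$. By definition $\Stab_{G'}(Y^0)$ is the stabilizer of the component $Y^0\in\pi_0(\widetilde{Z})$, so orbit-stabilizer produces a continuous injection $G'(\mathbb{Q}_l)/\Stab_{G'}(Y^0)\hookrightarrow \pi_0(\widetilde{Z})$ sending $g\mapsto g\cdot Y^0$.

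The essential point --- and what I expect to be the main obstacle --- is surjectivity of this map, i.e., transitivity of the $G'(\mathbb{Q}_l)$-action on $\pi_0(\widetilde{Z})$. I would split this into two pieces: (a) $G'(\mathbb{Q}_l)$ acts transitively on $\pi_0(Z)$, so every component of $Z$ is a Hecke translate of $Z^0$; and (b) the subgroup $\Stab_{G'}(Z^0)\subseteq G'(\mathbb{Q}_l)$ surjects onto $\pi_0(Y)\cong K_l/M$ through its action on the fiber $Y\to Z^0$ as deck transformations. Piece (b) is the crux: it says that elements of $\Stab_{G'}(Z^0)$, acting via Hecke correspondences, already hit every coset of $M$ in $K_l$ when one tracks their effect on components of $Y$. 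Granting (a) and (b), any component $Y'\in\pi_0(\widetilde{Z})$ lies over some $gZ^0$ with $g\in G'(\mathbb{Q}_l)$, and within that fiber $Y'$ is a $\Stab_{G'}(gZ^0)$-translate of $g\cdot Y^0$; composing yields a $G'(\mathbb{Q}_l)$-element sending $Y^0$ to $Y'$, establishing surjectivity and hence the desired homeomorphism.
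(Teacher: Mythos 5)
Your part (1) is fine: it is the standard identification of $\pi_0$ of a pro-\'etale cover with the orbit space of the monodromy action on a geometric fiber, and this is essentially the argument the paper imports by citing \cite[2.6, 2.7, Lemma 2.8]{chai2005monodromy}. The problem is part (2). You correctly isolate surjectivity of the orbit map $G'(\mathbb{Q}_l)/\Stab_{G'}(Y^0)\to\pi_0(\widetilde{Z})$ as the essential point, but you then ``grant'' the two claims (a) and (b) that constitute it, so the crux is not proved. Worse, claim (a) is not provable from the stated setup at all: a subvariety that is merely stable under the $l$-adic Hecke correspondences can perfectly well have several components on which $G'(\mathbb{Q}_l)$ acts without mixing them (take a disjoint union of two Hecke-stable subvarieties). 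Transitivity on $\pi_0(Z)$ is precisely the standing hypothesis of this section, appearing explicitly in Proposition \ref{thmk}, and the paper's proof of the lemma consists of the citation to Chai together with the remark that \emph{this is exactly where that transitivity assumption enters}. Your proposal neither invokes this hypothesis nor supplies an argument for (a); note also that the hypothesis concerns the prime-to-$\Sigma$ correspondences, whereas what you need is a statement about the $G'(\mathbb{Q}_l)$-action, so even quoting it requires a (short but real) bridging argument.

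Claim (b) is the deeper issue and is asserted with no argument. The group that visibly acts transitively on $\pi_0(Y)\cong K_l/M$ is the deck group $K_l$, which is the image of $K^p$ in $G(\mathbb{Q}_l)$ and is in general \emph{not} contained in $G'(\mathbb{Q}_l)$: its elements can have nontrivial similitude/abelianization component, and it is exactly this component that separates the connected components of the tower (the geometric monodromy $M$ lands in the derived group). So it is not automatic that elements of $\Stab_{G'}(Z^0)$, acting through Hecke correspondences, reach every coset of $M$ in $K_l$; establishing this requires the interplay between $M\subseteq G'$, the description of $\pi_0$ of the tower, and the transitivity hypothesis — which is the actual content of the arguments in \cite[2.6--2.7, Lemma 2.8]{chai2005monodromy} that the paper appeals to. As written, your proof sets up the correct framework (orbit map, continuity, reduction to transitivity) but leaves unproved precisely the statements that make the lemma true.
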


\begin{proof}
The arguments in \cite[Section 2.6, 2.7 and Lemma 2.8]{chai2005monodromy} also work in the present situation. We remark that this is where the assumption of the transitivity of Hecke correspondences is used.
\end{proof}

\begin{lem}\label{Mss} Suppose $M$ is infinite. Then $M$ contains an open subgroup of $K_l.$
\end{lem}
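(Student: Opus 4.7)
The plan is to take the Zariski closure $L$ of $M$ inside the algebraic group $G'_{\mathbb{Q}_l}$, use the Hecke-stability of $Z$ to force $L$ to be normal in $G'_{\mathbb{Q}_l}$, and then exploit the fact that $G'$ is simply connected semisimple (PEL types A and C) together with the isotropy hypothesis $l\notin\Sigma$ to deduce openness. First I would note that in PEL types A and C the derived group $G_{\mathrm{der}}$ is already simply connected, so $G'=G_{\mathrm{der}}$; moreover the Weil pairing shows the monodromy image sits inside $G_{\mathrm{der}}(\mathbb{Q}_l)$ up to a finite index. Hence we may view $M\subseteq G'(\mathbb{Q}_l)\cap K_l$ and let $L\subseteq G'_{\mathbb{Q}_l}$ be its Zariski closure.

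Second, I would show that $L$ is normalized by the subgroup $G'(\mathbb{Q}_l)$. For $g\in G'(\mathbb{Q}_l)$, the Hecke correspondence $T_g$ restricts to a correspondence on $Z$ by hypothesis, so it sends $Z^0$ to another component of $Z$. Lemma \ref{homeo}(b) says that under the $G'(\mathbb{Q}_l)$-action on $\widetilde{Z}$, the stabilizer of $Y^0$ is conjugated to the stabilizer of $g\cdot Y^0$. Combining this with the identification $\pi_0(Y)\cong K_l/M$ from Lemma \ref{homeo}(a), and tracking how $\rho_l$ changes when the base point is moved by a Hecke-translate, the conjugate $gMg^{-1}$ is the monodromy at the translated point and therefore agrees with $M$ up to finite index (after restricting to a sufficiently fine connected component of $\widetilde{Z}$). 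Taking Zariski closures gives $gLg^{-1}=L$ for every $g\in G'(\mathbb{Q}_l)$, and Zariski density of $G'(\mathbb{Q}_l)$ in $G'_{\mathbb{Q}_l}$ promotes this to algebraic normality of $L$.

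Third, since $G'$ is simply connected semisimple, every normal closed algebraic subgroup is (modulo a central factor) a product of simple factors. If $M$ is infinite then $L$ has positive dimension, so it contains at least one simple factor $H_i$. The hypothesis $l\notin\Sigma$ means every simple factor of $G'$ is isotropic over $\mathbb{Q}_l$, and by a standard result on $l$-adic analytic subgroups (a closed subgroup whose Zariski closure is an isotropic simply connected semisimple $\mathbb{Q}_l$-group, say Pink's or Serre's version of the Borel density / openness theorem), $M$ is open inside $L(\mathbb{Q}_l)$.

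The main obstacle I anticipate is upgrading \emph{``open in $L(\mathbb{Q}_l)$''} to \emph{``contains an open subgroup of $K_l$''}, i.e.\ forcing $L$ to have full dimension in $G'_{\mathbb{Q}_l}$. My plan is to appeal once more to the Hecke-stability: if $L$ omitted some simple factor $H_j$ of $G'$, then the $H_j(\mathbb{Q}_l)$-action on $\widetilde{Z}$ through the correspondences would produce infinitely many disjoint translates of $Y^0$ whose monodromies are (simultaneously) conjugates of $M$ but with trivial component in $H_j$; combined with the fact that the quotient $K_l/M$ is identified via Lemma \ref{homeo}(a) with $\pi_0(Y)$, an orbit-counting argument together with the finite generation of $\pi_1(Z^0,\overline{z})$ should contradict the transitivity of $G'(\mathbb{Q}_l)$ on $\pi_0(\widetilde Z)$ from Lemma \ref{homeo}(b). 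Carrying through this last step carefully — which is essentially the PEL generalization of Chai's normality argument in \cite{chai2005monodromy} — is where I expect to spend most of the effort.
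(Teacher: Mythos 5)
There is a genuine gap at your second step, and it is the crux of the lemma. You claim that for \emph{every} $g\in G'(\mathbb{Q}_l)$ the conjugate $gMg^{-1}$ is commensurable with $M$, hence $gLg^{-1}=L$, because the Hecke correspondence $T_g$ preserves $Z$. What the geometry of $T_g$ actually gives is weaker: $T_g$ is a finite \'etale correspondence $Z^0\leftarrow W\rightarrow Z^j$ onto some (a priori different) component $Z^j$, and the comparison of the two pullbacks of the universal family only shows that $g M g^{-1}$ is commensurable with the monodromy group $M_j$ of $Z^j$, not with $M$ itself. Commensurability of $gMg^{-1}$ with $M$ is only available when $g$ stabilizes the relevant component of the pro-tower, i.e.\ for $g\in\Stab_{G'}(Y^0)$. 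This is exactly why the paper does not argue by direct conjugation: it only records the inclusion $\Stab_{G'}(Y^0)\subseteq \mathbf{N}(\mathbb{Q}_l)$ for the normalizer $\mathbf{N}$ of $H$ (the neutral component of the Zariski closure of $M$), then uses Lemma \ref{homeo} to see that $G'(\mathbb{Q}_l)/\Stab_{G'}(Y^0)$ is profinite, hence $G'(\mathbb{Q}_l)/\mathbf{N}(\mathbb{Q}_l)$ is compact, and then invokes Borel--Tits: compactness forces $\mathbf{N}^0$ to contain a maximal connected solvable subgroup, whose unipotent part is the unipotent radical of a minimal parabolic because $G'_{\mathbb{Q}_l}$ is isotropic ($l\notin\Sigma$); since $\mathbf{N}^0$ is reductive this forces $\mathbf{N}=G'_{\mathbb{Q}_l}$, i.e.\ normality of $H$. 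Note that your shortcut never uses the isotropy hypothesis to obtain normality, and would ``prove'' the statement just as well at anisotropic places, where the compactness of $G'(\mathbb{Q}_l)/\mathbf{N}(\mathbb{Q}_l)$ carries no information; that is a sign the step cannot be correct as written.

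Your final step also needs repair. Once normality is known, the paper concludes immediately: $H$ is a normal connected subgroup of $G'$ meeting every simple factor in an infinite group, hence $H=G'$, and the Lie-algebra argument ($\mathfrak{m}\supseteq[\mathrm{Lie}\,H,\mathrm{Lie}\,H]=\mathrm{Lie}\,G'$) gives openness in $K_l$. Your proposed substitute — an orbit count producing ``infinitely many disjoint translates of $Y^0$'' contradicting transitivity — risks circularity: at this stage one only knows $\pi_0(Y)\cong K_l/M$ and $\pi_0(\widetilde Z)\cong G'(\mathbb{Q}_l)/\Stab_{G'}(Y^0)$ are profinite, and finiteness of $K_l/M$ is precisely the conclusion of the lemma (it is deduced in Proposition \ref{kprop3.1}, together with the Kneser--Tits input, \emph{after} this lemma). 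So the counting you envisage has no finiteness to play against. If you want to salvage a commensurator-style argument (using that $Z$ is quasi-projective, so has finitely many components, plus absence of proper finite-index subgroups of $G'(\mathbb{Q}_l)$ at isotropic places), that can be attempted, but it is a different and more delicate route than what you wrote and still requires the inputs your sketch omits.
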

\begin{proof} Let $H$ denote the neutral component of the Zariski closure of $M$ in $G.$ Let $\mathfrak{m}$ denote the Lie algebra of $M$ as an $l$-adic Lie group. By \cite[Corollary 7.9]{borel2012linear}, $\mathfrak{m}$ coincides with the Lie algebra of the Zariski closure $\overline{M}$, so $\mathfrak{m}$ contains the commutator subgroup of the Lie algebra $H$. Thus, if we can show $H=G'$, then $M$ contains an open subgroup of $G'$, which implies $M$ contains an open subgroup of $K_l.$ We do so by investigating the normalizer of $H$ in $G'$ and show that $H$ is in fact normal in $G'$.

By the same argument as in \cite[Proposition 4.1]{chai2005monodromy}, $H$ is semisimple, so $H\subseteq G'.$ Let $\mathbf{N}$ be the normalizer of $H$ in $G'$ and $\mathbf{N}^0$ its neutral component. The proof of \cite[Lemma 3.3]{chai2005monodromy} shows that $\mathbf{N}^0$ is reductive. 

Now we show $\mathbf{N}$ contains a nontrivial normal connected subgroup of $G'_{\mathbb{Q}_l}.$ There is a natural inclusion $\Stab_{G'}(Y)\subseteq \mathbf{N}(\mathbb{Q}_l),$ which gives rise to a continuous surjection from $G'(\mathbb{Q}_l)/\Stab_{G'}(Y)$ to $G'(\mathbb{Q}_l)/\mathbf{N}(\mathbb{Q}_l).$ 
By Lemma \ref{homeo}, the set on the left is profinite and in particular compact, so the group on the right is also compact. 
Thus $G'(\mathbb{Q}_l)/\mathbf{N}^0(\mathbb{Q}_l)$ is compact. By \cite[Proposition 9.3]{borel1965groupes}, $\mathbf{N}^0$ contains a maximal connected solvable subgroup $A$ of $G'_{\mathbb{Q}_l}.$ 
By the assumption on $l$, $G'_{\mathbb{Q}_l}$ is isotropic, so \cite[Propositions 8.4, 8.5]{borel1965groupes} imply that the set of unipotent elements $A_u$ is the unipotent radical of a minimal parabolic subgroup of $G'_{\mathbb{Q}_l}$. Hence, $A_u$ is nontrivial, connected, and normal.

Therefore, we must have $\mathbf{N}^0=\mathbf{N}=G'_{\mathbb{Q}_l}$ and $H$ is a normal subgroup of $G'$ having infinite intersections with all simple components of $G'$. We conclude that $H=G',$ which completes the proof.
\end{proof}

\begin{prop}\label{kprop3.1}
Notations and conditions as in Proposition \ref{thmk}. Suppose $M$ is infinite. Then $Z$ is connected.
\end{prop}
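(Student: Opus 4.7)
The plan is to use Lemma~\ref{Mss} to reduce to the setting of finite monodromy-induced fibers and then count connected components of $Z$ via the double description of $\pi_0(\widetilde Z)$ supplied by Lemma~\ref{homeo}.

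Since $M$ is infinite, Lemma~\ref{Mss} gives that $M$ is open in $K_l$; in particular $[K_l:M] < \infty$, and hence $\pi_0(Y) \cong K_l/M$ is finite. I next exploit that two actions on $\pi_0(\widetilde Z)$ are in play: the $K_l$-action by deck transformations of the pro-\'etale cover $\widetilde Z \to Z$, and the $G'(\mathbb{Q}_l)$-action by Hecke correspondences. These commute because one moves the level structure at $l$ while the other moves ``through the tower''. By Lemma~\ref{homeo}(2) the $G'(\mathbb{Q}_l)$-action is transitive, and by commutativity the $K_l$-stabilizer of any translate $g\cdot Y^0$ (for $g\in G'(\mathbb{Q}_l)$) equals the $K_l$-stabilizer of $Y^0$, which is $M$. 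Consequently, every $K_l$-orbit on $\pi_0(\widetilde Z)$ has the same cardinality $[K_l:M] = |\pi_0(Y)|$.

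Since $\pi_0(Z)$ is precisely the set of $K_l$-orbits on $\pi_0(\widetilde Z)$, proving $|\pi_0(Z)| = 1$ is equivalent to showing $\pi_0(Y) = \pi_0(\widetilde Z)$ inside the transitive $G'(\mathbb{Q}_l)$-set $G'(\mathbb{Q}_l)/\Stab_{G'}(Y^0)$ --- in other words, that every connected component of $\widetilde Z$ already sits above $Z^0$. Equivalently, for each $g \in G'(\mathbb{Q}_l)$ I must produce $k \in K_l$ with $g\cdot Y^0 = k \cdot Y^0$.

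The main obstacle is precisely this final identification, which parallels \cite[Proposition 4.4]{chai2005monodromy} and the main result of \cite{kasprowitz2012monodromy}. I plan to establish it using three ingredients: the openness of $M$ in $K_l$ (Lemma~\ref{Mss}); the fact that $G'$ is isotropic at every simple factor over $\mathbb{Q}_l$ (forced by $l \notin \Sigma$); and the fact that $G' = G_{\mathrm{der}}^{\mathrm{sc}}$ is simply connected, which enables strong-approximation--type arguments. Combined, these should let one show that the subgroup of $\mathrm{Aut}(\widetilde Z)$ generated by $\Stab_{G'}(Y^0)$ and $K_l$ already surjects onto $G'(\mathbb{Q}_l)$, delivering the required $k$. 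Since our hypotheses on $\Sigma$ and the choice $G' = G_{\mathrm{der}}^{\mathrm{sc}}$ mirror those of Chai--Oort and Kasprowitz, I expect the argument of loc.~cit.\ to transfer essentially verbatim to the present PEL setting.
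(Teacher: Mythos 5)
Your write-up sets up the component count correctly (Lemma~\ref{Mss} gives $[K_l:M]<\infty$, hence $\pi_0(Y)$ finite, and $\pi_0(Z)$ is recovered from $K_l$-orbits on $\pi_0(\widetilde Z)$), but it stops exactly at the point that carries the content of the proposition. The statement you defer --- that for every $g\in G'(\mathbb{Q}_l)$ there is $k\in K_l$ with $g\cdot Y^0=k\cdot Y^0$, which you propose to obtain from a ``strong-approximation--type'' argument that you ``expect'' to transfer from Chai--Oort and Kasprowitz --- is not proved, only announced. As it stands this is a genuine gap: nothing in your text produces the required element $k$, and the intermediate claims you do make are themselves shaky (the $K_l$-deck action and the $G'(\mathbb{Q}_l)$-Hecke action on $\widetilde Z$ are both translation-type actions on the same tower and do not commute in any obvious sense, so your assertion that the $K_l$-stabilizer of every translate $g\cdot Y^0$ equals $M$ needs justification; fortunately it is also not needed).

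The paper closes the gap by a different and shorter route, which you should compare with. Since $Z$ is quasi-projective it has finitely many connected components, and since $\pi_0(Y)\cong K_l/M$ is finite, $\pi_0(\widetilde Z)$ is finite; by Lemma~\ref{homeo}(2) this says $\Stab_{G'}(Y^0)$ has \emph{finite index} in $G'(\mathbb{Q}_l)$. Now use that $l\notin\Sigma$, so every simple factor of $G'_{\mathbb{Q}_l}$ is isotropic, and that $G'=G_{\mathrm{der}}^{\mathrm{sc}}$ is simply connected: the Kneser--Tits theorem (\cite[Theorems 7.1, 7.6]{platonov1992algebraic}) implies $G'(\mathbb{Q}_l)$ has no proper noncentral normal subgroups and hence no proper finite-index subgroups. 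Therefore $\Stab_{G'}(Y^0)=G'(\mathbb{Q}_l)$, i.e.\ $\pi_0(\widetilde Z)$ is a single point, which is stronger than what you were aiming for ($\widetilde Z$ itself is connected, and a fortiori $Z$ is). Note two features of this argument that your plan misses: the quasi-projectivity of $Z$ is what makes $\pi_0(\widetilde Z)$ finite (you never use it), and no strong approximation or comparison of $K_l$-orbits with $G'(\mathbb{Q}_l)$-orbits is needed once finiteness of the index is in hand. If you want to salvage your approach, you would have to actually carry out the approximation step, which is essentially re-proving what Kneser--Tits gives for free here.
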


\begin{proof}
By Lemma \ref{Mss}, $M$ contains an open subgroup of $K_l.$ Hence, $K_l/M$ is finite. By Lemma \ref{homeo}, $\pi_0(Y)$ is also finite. Since $Z$ is quasi-projective, it has finitely many connected components, $\pi_0(\widetilde{Z})$ is finite as well. Again by Lemma \ref{homeo}, this implies that $G'(\mathbb{Q}_l)/\Stab_{G'}(Y^0)$ is finite. 
 
If $G'(\mathbb{Q}_l)/\Stab_{G'}(Y^0)\neq\{1\},$ then $\Stab_{G'}(Y^0)$ would be a non-trivial subgroup of finite index. The Kneser-Tits conjecture for simple and simply connected $\mathbb{Q}_l$-isotropic groups implies that none of the simple components of $G'$ has non-trivial non-concentral normal subgroups (\cite[Theorem 7.1, 7.6]{platonov1992algebraic}), and hence, no non-trivial subgroup of finite index. This is a contradiction. We conclude $\pi_0(\widetilde{Z})=G'(\mathbb{Q}_l)/\Stab_{G'}(Y^0)=\{1\}.$ In particular, $Z$ is connected.
 \end{proof}

The following proposition, generalizing the main theorem of \cite{kasprowitz2012monodromy}, is the upshot of this section.

\begin{prop}\label{thmk}
Suppose that the prime-to-$\Sigma$ Hecke correspondences from $G'$ act transitively on the set of connected components of $Z.$ If $z$ is not in the basic stratum, then $Z$ is connected. 
\end{prop}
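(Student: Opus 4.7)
The plan is to reduce Proposition \ref{thmk} to Proposition \ref{kprop3.1} by verifying that the $\ell$-adic monodromy image $M = \rho_\ell(\pi_1(Z^0,\overline{z}))$ is infinite whenever the generic point $z$ of $Z^0$ lies outside the basic stratum. Granted this, Proposition \ref{kprop3.1} immediately delivers the connectedness of $Z$. The real content of Proposition \ref{thmk} is therefore the implication ``non-basic generic point $\Rightarrow$ infinite monodromy.''

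To establish this implication, I would argue by contradiction, following the template of \cite[Proposition 4.4]{chai2005monodromy} and \cite{kasprowitz2012monodromy}. Assume $M$ is finite. Then after replacing $Z^0$ by a suitable connected finite étale cover, we may arrange that the pulled-back $\ell$-adic representation of $\pi_1$ is trivial, making $T_\ell A$ a constant local system on this cover. Combining this with Tate's isogeny theorem in positive characteristic (Zarhin--Faltings--Mori) constrains the generic $F$-isocrystal of the family: roughly, the Zariski closure of $M$ in $G'_{\mathbb{Q}_\ell}$ must be contained in the centralizer of the generic Newton cocharacter, which is a proper Levi subgroup of $G'_{\mathbb{Q}_\ell}$ whenever $z$ is non-basic. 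Running the Zariski-density analysis of Lemma \ref{Mss} in this restricted setting produces an isotropic semisimple subgroup of $G'_{\mathbb{Q}_\ell}$ that is strictly contained in $G'_{\mathbb{Q}_\ell}$ yet still normal in it, which contradicts the Kneser--Tits-type conclusion already used there.

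The main obstacle is to carry out the above argument in the generality needed for PEL type D, where the derived group $G_{\text{der}}$ is not simply connected. All the preceding preparation --- Lemmas \ref{homeo} and \ref{Mss}, as well as Proposition \ref{kprop3.1} --- has been phrased uniformly in terms of the simply connected cover $G' = G_{\text{der}}^{\text{sc}}$ precisely to accommodate this case. Once those ingredients are in place, the passage from ``non-basic generic point'' to ``infinite monodromy'' proceeds essentially verbatim as in Chai and Kasprowitz, the only additional effort being the bookkeeping required to compare Hecke correspondences at the level of $G'$ with those at the level of $G_{\text{der}}$. In PEL types A and C, where $G_{\text{der}}$ is already simply connected, this bookkeeping is vacuous and the argument reproduces \cite{kasprowitz2012monodromy} directly.
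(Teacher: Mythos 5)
Your high-level plan is correct and matches the paper: reduce to Proposition \ref{kprop3.1} by showing that if $z$ is non-basic then $M=\rho_l(\pi_1(Z^0,\overline{z}))$ is infinite, i.e.\ by proving the contrapositive that finite monodromy forces the generic point to be basic. But the substantive argument you propose for this implication does not hold up, and it is in fact quite different from (and weaker than) what the paper does.

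The problem is the group-theoretic contradiction you try to extract. If $M$ is finite, the Zariski closure of $M$ in $G'_{\mathbb{Q}_l}$ is finite, so its neutral component $H$ is trivial; Lemma \ref{Mss} is simply not applicable, since its entire analysis (semisimplicity of $H$, compactness of the quotient by the normalizer of $H$, isotropicity, Kneser--Tits) assumes $M$ infinite from the outset. A trivial subgroup is normal in $G'$ without contradicting Kneser--Tits: that theorem rules out non-trivial non-central normal subgroups of finite index, not the trivial group. Likewise, the assertion that the Zariski closure of a \emph{finite} $M$ lands in the Levi subgroup $\mathrm{Cent}(\nu_b)$ is vacuous --- a finite group is contained in many subgroups --- and it is not where the constraint ``non-basic'' enters. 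There is no ``isotropic semisimple subgroup that is strictly contained yet normal'' to be found along this route.

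What the paper actually does is geometric, not group-theoretic. Assuming $M$ finite: (i) Oort's theorem \cite[Theorem 2.1]{oort1974subvarieties} gives a finite surjective base change $Z'\to Z^0$ over which $A_{Z^0}$ becomes isogenous to an isotrivial abelian scheme; (ii) Faltings's extension lemma \cite[Proposition I.2.7]{faltings2013degeneration} extends this isogeny over $\overline{Z'}$, so $\overline{Z^0}$ lies in a single Newton stratum; (iii) one shows $\overline{Z^0}$ is proper via the valuative criterion (N\'eron models); (iv) using quasi-affinity of Ekedahl--Oort strata and the fact that they are Hecke-stable, one descends to a $0$-dimensional Hecke-stable subscheme of $\overline{Z^0}$, i.e.\ a point with finite $l$-adic Hecke orbit; (v) by \cite[Proposition 4.8]{yu2005basic}, such a point must be basic, contradicting the non-basic assumption. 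This geometric chain --- particularly the use of E--O strata and Yu's characterization of basic points via finite Hecke orbits --- is the essential content missing from your proposal. Finally, the remark about PEL type D is a side issue: the paper works with $G'=G^{\mathrm{sc}}_{\mathrm{der}}$ throughout precisely to make the statement uniform, but this bookkeeping is not where the difficulty lies.
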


\begin{proof}
By Proposition \ref{kprop3.1}, it suffices to show that $M=\rho_l(\pi_1(Z^0,\overline{z}))$ is infinite for all $l\notin \Sigma$. Suppose towards a contradiction that $M$ is finite for some $l$. By \cite[Theorem 2.1]{oort1974subvarieties}, there exists a finite surjective base change $Z'\rightarrow Z^0$ such that $A_{Z^0}\times_{Z^0}Z'$ is isogenous to an isotrivial abelian scheme $\mathbf{A}$ defined over $\overline{\mathbb{F}_p}.$ By \cite[Proposition I.2.7]{faltings2013degeneration}, the isogeny $A_{Z^0}\times_{Z^0}Z'\rightarrow \mathbf{A}\times_{\overline{\mathbb{F}_p}}Z'$ over $Z'$ extends to an isogeny over $\overline{Z'}.$ Hence, $\overline{Z^0}$ lies in a single Newton stratum. We claim that $\overline{Z^0}$ contains a basic point, which contradicts the assumption that the generic point of $Z^0$ lies outside the basic stratum. 

We first show that $\overline{Z^0}$ is a proper scheme over $\overline{\mathbb{F}_p}$. Let $R$ be a discrete valuation ring over $\mathcal{O}_E\otimes_{\mathbb{Z}}\mathbb{Z}_{(p)}$ with fraction field $K$, and let $(A,\lambda,i,\overline{\eta})$ be a $K$-valued point of $\overline{Z^0}.$ Then the N\'{e}ron model of $A$ over $R$ is in fact an abelian scheme. It is straightforward to check that $\lambda,i$ and $\overline{\eta}$ give a PEL structure on the N\'{e}ron model of $A$. Hence, $(A,\lambda,i,\overline{\eta})$ extends to an $R$-valued point of $\overline{Z^0}$ and $\overline{Z^0}$ is proper by the valuative criterion of properness.

Now we follow closely the proof of \cite[Proposition 6]{chai1995every} to show that $\overline{Z^0}$ contains a basic point. By assumption, the generic point $z$ of $Z^0$ is not in the basic stratum, so $\overline{Z^0}$ is positive dimensional. Since each Ekedahl-Oort stratum is quasi-affine, $\overline{Z^0}$ cannot be contained in the generic Ekedahl-Oort stratum. Hence, it must intersect some smaller stratum $S^{\omega_1}.$ By definition, each Ekedahl-Oort stratum is closed under $l$-adic Hecke correspondences, so $\overline{Z^0}\cap S^{\omega_1}$ is closed under $l$-adic Hecke correspondences as well. If $\overline{Z^0}\cap S^{\omega_1}$ is not $0$-dimensional, then its closure meets some smaller stratum $S^{\omega_2}.$ We can repeat this argument, and eventually reach a stratum $S^{\omega}$ such that $\overline{Z^0}\cap S^{\omega}$ is non-empty, $0$-dimensional, and closed under $l$-adic Hecke correspondences. Thus, $\overline{Z^0}$ contains a point whose $l$-adic Hecke orbit is finite. By \cite[Proposition 4.8]{yu2005basic}, this point must be basic. This completes the contradiction and the proof of our proposition.
\end{proof}

We apply Proposition \ref{thmk} to Newton strata and central leaves to obtain the following corollary.
\begin{cor}\label{equiv}\begin{enumerate}
    \item Suppose $\mathcal{N}$ is a non-basic Newton stratum. Further assume $\mathcal{N}$ is smooth. Then the prime-to-$\Sigma$ Hecke correspondences from $G'$ act transitively on $\pi_0(\mathcal{N})$ if and only if $\mathcal{N}$ is irreducible.
    \item Let $C\subseteq \mathscr{S}_{K^p}$ be a central leaf not contained in the basic stratum. Then the prime-to-$\Sigma$ Hecke correspondences from  $G'$ act transitively on $\pi_0(C)$ if and only if $C$ is irreducible.
\end{enumerate}
\end{cor}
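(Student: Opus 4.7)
The plan is to derive both statements as immediate consequences of Proposition \ref{thmk}. In each part the backward direction is formal: if $\mathcal{N}$ (respectively $C$) is irreducible, then it is connected, so $\pi_0$ is a singleton and any group action on it is trivially transitive. The forward direction in each case is a direct application of Proposition \ref{thmk} with $Z = \mathcal{N}$ (respectively $Z = C$), so the task reduces to checking that $Z$ satisfies the hypotheses of that proposition.

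For part (1) I take $Z = \mathcal{N}$. Smoothness is assumed. Newton strata are stable under all prime-to-$p$ Hecke correspondences, since a prime-to-$p$ quasi-isogeny preserves the $F$-isocrystal with $G$-structure and hence the Newton point; in particular $\mathcal{N}$ is stable under the $l$-adic Hecke correspondences coming from $G' \subseteq G$. The generic point of any connected component of $\mathcal{N}$ lies in $\mathcal{N}$ itself, which is non-basic by assumption, so the basic-stratum hypothesis of Proposition \ref{thmk} is met. Under the transitivity assumption, Proposition \ref{thmk} yields that $\mathcal{N}$ is connected. A connected smooth scheme is irreducible — smoothness implies the local rings are regular and hence integral, so the irreducible components are pairwise disjoint — which completes the forward direction.

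For part (2) I take $Z = C$. Central leaves are locally closed smooth subschemes of $\mathscr{S}_{K^p}$ by the results recalled in the preliminaries, and they are stable under prime-to-$p$ Hecke correspondences because such correspondences preserve the geometric isomorphism class of the universal Barsotti-Tate group. Since $C$ is not contained in the basic stratum, the generic point of any connected component lies in a non-basic Newton stratum, so the basic-stratum hypothesis of Proposition \ref{thmk} holds. The same application gives that $C$ is connected, and smoothness upgrades this to irreducibility.

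There is no substantive obstacle: the proof is essentially bookkeeping to match the hypotheses of Proposition \ref{thmk}. The only point worth emphasizing is that stability under prime-to-$p$ (hence under $l$-adic for any $l \neq p$) Hecke correspondences is automatic for Newton strata and central leaves, so the assumed transitivity on $\pi_0$ is exactly the input needed to feed the monodromy machinery of Section \ref{section 4}.
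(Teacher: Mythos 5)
Your proposal is correct and follows exactly the route the paper intends: the paper proves the corollary simply by applying Proposition \ref{thmk} to $Z=\mathcal{N}$ and $Z=C$, and your bookkeeping (stability under prime-to-$p$ Hecke correspondences, smoothness, the non-basic hypothesis, and the upgrade from connected to irreducible via smoothness) fills in precisely the intended verifications.
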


\begin{rmk}\label{smooth}
Shen and Zhang \cite[Proposition 6.2.7]{shen2017stratifications} proved that non-basic Newton strata on Shimura varieties of abelian type are smooth if the pair $(G, \mu)$ is fully Hodge-Newton decomposable. G{\"o}rtz, He and Nie classified such pairs in \cite[Theorem 3.5]{goertz2019fully}.
\end{rmk}

\section{The discrete part} In this section, we restrict to cases A and C. We prove the discrete part of the Hecke Orbit conjecture under the assumption that the Newton stratum in question is irreducible and contains a $B$-hypersymmetric point. Namely,
\begin{thm}\label{irred}
Suppose $\mathcal{N}$ is a Newton stratum. Further assume that $\mathcal{N}$ contains a $B$-hypersymmetric point $x_0$ in some irreducible component $\mathcal{N}^0.$ Then $H^p$ acts transitively on $\Pi_0(C(x)\cap \mathcal{N}^0)$ for any $x\in\mathcal{N}^0.$ Moreover, if $\mathcal{N}$ is not the basic stratum, then $C(x)\cap \mathcal{N}^0$ is irreducible.
\end{thm}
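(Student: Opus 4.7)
The two conclusions are linked: once $H^p$ is shown to act transitively on $\pi_0\bigl(C(x)\cap\mathcal{N}^0\bigr)$ for every $x\in\mathcal{N}^0$, the irreducibility in the non-basic case follows by applying Theorem \ref{thmk} to the smooth, locally closed, Hecke-stable subvariety $Z:=C(x)\cap\mathcal{N}^0$ (interpreted after possibly replacing $\mathcal{N}^0$ by the finite union of components in its Hecke orbit, so that Hecke-stability holds on the nose). Thus the main task is the transitivity statement.

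I follow the template of Chai--Oort \cite[\S4--6]{chai2005hecke} in two moves. Move one: reduce to $x=x_0$. By Oort's foliation theory, any two central leaves inside the same Newton stratum are joined by finite surjective isogeny correspondences that commute with prime-to-$p$ Hecke, so transitivity of $H^p$ on $\pi_0\bigl(C(x_0)\cap\mathcal{N}^0\bigr)$ propagates to transitivity on $\pi_0\bigl(C(x)\cap\mathcal{N}^0\bigr)$ for arbitrary $x\in\mathcal{N}^0$. Move two: at $x_0$ exploit the $B$-hypersymmetric structure. By Theorem \ref{hs}, $\End_B(A_{x_0})\otimes_{\mathbb{Z}}\mathbb{Q}$ is maximal, so after fixing a lift $\tilde{x}_0$ of $x_0$ to the tower $\varprojlim_{K^p}\mathscr{S}_{K^p}$, the Tate module identifies $\Stab_{G(\mathbb{A}_f^p)}(\tilde{x}_0)$ with the $\mathbb{A}_f^p$-points of an inner form encoding the $B$-linear endomorphism structure at $x_0$, and this stabilizer is as large as possible. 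The $H^p$-orbit of $x_0$ is indexed by the double-coset space $K^p\backslash G(\mathbb{A}_f^p)/\Stab_{G(\mathbb{A}_f^p)}(\tilde{x}_0)$, and I show this orbit meets every component of $C(x_0)\cap\mathcal{N}^0$.

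The hard part will be move two: producing the combinatorial matching between components of $C(x_0)\cap\mathcal{N}^0$ and the double cosets witnessing hypersymmetricity. One must also navigate the fact that $\mathcal{N}^0$ is only an irreducible component of $\mathcal{N}$ (Hecke permutes the components of $\mathcal{N}$), so the genuinely Hecke-stable ambient space is a finite union of components, and only the finite-index subgroup of $H^p$ stabilizing $\mathcal{N}^0$ acts on $\mathcal{N}^0$ itself. The combinatorial accounting here, analogous to the Serre--Tate/cascade analysis used in the continuous part of the proof, is the delicate technical point; once it is established, the irreducibility conclusion falls out of Theorem \ref{thmk} with no further work beyond checking that $Z$ avoids the basic stratum, which is automatic from the hypothesis that $\mathcal{N}$ is non-basic.
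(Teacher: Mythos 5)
Your proposal identifies the right leverage point (hypersymmetry makes the endomorphism algebra as large as the $p$-divisible group allows, so one can hope to adjust isogenies to be prime-to-$p$), and the final deduction of irreducibility via the monodromy result is on target. But the core of the argument is left as a placeholder: you declare Move two---matching components of $C(x_0)\cap\mathcal{N}^0$ to double cosets $K^p\backslash G(\mathbb{A}_f^p)/\Stab_{G(\mathbb{A}_f^p)}(\tilde{x}_0)$ and showing the orbit meets every component---to be ``the hard part,'' and then do not supply it. That \emph{is} the theorem, and the paper's proof of it takes a different and more concrete route which you would need to reconstruct.

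Two specific issues. First, Move one is not how the paper reduces, and it is not obviously valid as stated: the finite isogeny correspondences between two distinct central leaves in $\mathcal{N}$ are built from $p$-power isogenies of Barsotti--Tate groups, and it is not clear that they commute with prime-to-$p$ Hecke in a way that transports transitivity on $\pi_0\bigl(C(x_0)\cap\mathcal{N}^0\bigr)$ to transitivity on $\pi_0\bigl(C(x)\cap\mathcal{N}^0\bigr)$. The paper avoids this entirely: it works directly with $C^0=C(x)\cap\mathcal{N}^0$ for arbitrary $x$, and uses the almost product structure $\pi_N\colon Ig_{m,\mathbb{X}}\times\overline{\mathcal{M}}_{\mathbb{X}}^{n,d}\to\mathcal{N}^0$ (restricting to a finite surjection $q_m\colon Ig_{m,\mathbb{X}}\times\{t\}\to C^0$) to plant, in \emph{each} irreducible component $C^0_j$, a point $y_j$ that is quasi-isogenous to $x_0$ and hence itself $B$-hypersymmetric. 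Second, the step that actually uses hypersymmetry is not a double-coset count but a weak-approximation argument: since $y_i,y_j$ lie in one central leaf, their $p$-divisible groups are isomorphic; hypersymmetry identifies $U(\End_B(A_i[p^\infty]),*)$ with $I_B\otimes\mathbb{Q}_p$, and weak approximation for the connected $\mathbb{Q}$-rational group $I_B$ (Lemma~\ref{weakapp}) lets one replace the initial quasi-isogeny $\phi\colon A_i\to A_j$ by $\phi\circ\psi$ with $\psi\in I_B(\mathbb{A}_f^p)$, producing a prime-to-$p$ isogeny inducing an isomorphism on $p$-divisible groups. Your suggestion that the missing combinatorics is ``analogous to the Serre--Tate/cascade analysis'' points in the wrong direction; the cascade structure plays no role in the discrete part. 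Finally, for the irreducibility conclusion one must check the hypothesis of Corollary~\ref{equiv}(2), namely transitivity of \emph{prime-to-$\Sigma$} Hecke correspondences coming from $G'=G_{\mathrm{der}}^{\mathrm{sc}}$, which requires a further appeal to weak approximation to arrange the isogenies to be prime-to-$\Sigma$ and to come from $G_{\mathrm{der}}$---this is not ``no further work.''
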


\begin{lem}\label{weakapp}
Let $(A,\lambda,i,\overline{\eta})$ be an abelian variety with PEL structure over $k.$ Let $I_B$ be the unitary group attached to $(\End_B^0(A),*)$ where $*$ denotes the Rosati involution, i.e. for every commutative algebra $R,$ $$I_B(R)=U(\End_B(A)\otimes R,*)=\{u|u\cdot u^*=1=u^*\cdot u\}.$$ Then $I_{B}$ satisfies weak approximation.
\end{lem}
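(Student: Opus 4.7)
The plan is to exhibit $I_B$ as a $\mathbb{Q}$-rational variety via the classical Cayley transform; weak approximation then follows automatically, since any smooth, geometrically integral $\mathbb{Q}$-variety birational to an affine space has its rational points dense in products of local points.

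First, I set $R = \End_B^0(A)$, the finite-dimensional semisimple $\mathbb{Q}$-algebra with positive involution $*$, and consider the $\mathbb{Q}$-subspace $V^- = \{x \in R : x + x^* = 0\}$ of $*$-skew elements. Being a finite-dimensional $\mathbb{Q}$-vector space, $V^-$ is isomorphic as an affine $\mathbb{Q}$-scheme to $\mathbb{A}^n_{\mathbb{Q}}$ for some $n$, and therefore $V^-(\mathbb{Q})$ is dense in $\prod_{v \in S} V^-(\mathbb{Q}_v)$ for every finite set $S$ of places of $\mathbb{Q}$.

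Next, I introduce the Cayley transform $c : I_B \dashrightarrow V^-$ defined on the dense open subscheme $\{u \in I_B : u + 1 \in R^\times\}$ by
\[
c(u) = (u - 1)(u + 1)^{-1}.
\]
Since $u+1$ and $u-1$ both lie in the commutative subring $\mathbb{Q}[u] \subseteq R$, the identity $uu^* = 1$ quickly yields $c(u)^* = -c(u)$, so that $c(u) \in V^-$. A parallel computation shows that the rational map $x \mapsto (1+x)(1-x)^{-1}$, defined where $1-x$ is invertible, lands in $I_B$ and is inverse to $c$. Hence $c$ furnishes a $\mathbb{Q}$-isomorphism between dense open subschemes of $I_B$ and $V^-$, so $I_B$ is $\mathbb{Q}$-birational to an affine space. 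Combined with the density already established for $V^-$ (and the corresponding density on any nonempty Zariski-open subset of affine space, using smoothness and the $v$-adic implicit function theorem to approximate local points by rational points staying inside the open), this yields weak approximation for $I_B$.

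The main point requiring care is the verification of the Cayley identities over the noncommutative algebra $R$, but this is elementary since all elements involved, namely $u$, $1$, $u \pm 1$ and their inverses, lie in the commutative polynomial subring $\mathbb{Q}[u] \subseteq R$. An alternative approach via Kneser--Platonov weak approximation for the simply connected derived group (which is simply connected in cases A and C), followed by a separate treatment of the torus quotient $I_B / I_B^{\mathrm{der}}$, would require considerably more case analysis, making the Cayley transform the preferable route.
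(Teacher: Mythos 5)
Your Cayley-transform computation is essentially the mechanism behind the paper's cited reference (Chai--Oort's monodromy paper, Lemma 4.6) for establishing $\mathbb{Q}$-rationality, and the algebra is correct: every element you manipulate lies in the commutative subring $\mathbb{Q}[u]\subseteq\End_B^0(A)$, so the identities $c(u)^* = -c(u)$ and $c^{-1}(x)\,c^{-1}(x)^* = 1$ do hold, and appealing to Platonov--Rapinchuk's weak approximation for connected $\mathbb{Q}$-rational groups (their Proposition 7.3) is exactly what the paper does. In that sense your route coincides with the paper's; you simply carry out the rational parametrization by hand rather than citing it.

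There is, however, a genuine gap: you never establish that $I_B$ is \emph{connected}, and this cannot be omitted. The set $\{u \in I_B : u+1 \in \End_B^0(A)^\times\}$ is nonempty and open, but it is \emph{dense} in $I_B$ only if $I_B$ is irreducible; in general the Cayley transform parametrizes only a dense open of the identity component $I_B^0$. If $(\End_B^0(A),*)$ had a simple factor of orthogonal type, $I_B$ would contain an orthogonal group and the Cayley domain would miss the non-identity component altogether (for a reflection $u$ with eigenvalues $\pm 1$, $u+1$ is singular). And weak approximation is not inherited by a disconnected group from its identity component: already $\mu_2/\mathbb{Q}$ fails weak approximation, since the diagonal $\{\pm 1\}\hookrightarrow \prod_{v\in S}\{\pm 1\}$ is not dense once $|S|\ge 2$. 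So your argument as written proves weak approximation for $I_B^0$, not for $I_B$. The paper closes this gap explicitly, citing Chai--Oort Lemma 4.5 for connectedness of the ambient unitary group $I$ and Humphreys \S 2.11 (centralizers in connected groups) to deduce that $I_B$, being the centralizer of $B$ in $I$, is connected. To repair your proposal you should either invoke these connectedness facts, or argue directly that for PEL types A and C the Rosati involution on $\End_B^0(A)$ has no orthogonal factor, so $I_B$ is a product of unitary and symplectic groups and hence connected.
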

%\hl{revise}
\begin{proof}
Let $I$ denote the unitary group attached to $(\End^0(A),*)$. Then $I$ is connected (see the proof of \cite[Lemma 4.5]{chai2011monodromy}). By \cite[\textsection 2.11]{humphreys2011conjugacy}, $I_B$ is connected. By \cite[Lemma 4.6]{chai2011monodromy}, $I$ is $\mathbb{Q}$-rational, so is $I_B$. Then \cite[Proposition 7.3]{platonov1992algebraic} implies $I_B$ satisfies weak approximation.
\end{proof}

Now we are ready to prove Theorem \ref{irred}. The key idea of the proof is as follows. Denote by $\mathcal{N}^0$ the irreducible component of $\mathcal{N}$ containing $x_0$. For a central leaf $C$ in $\mathcal{N}$ such that $C\cap \mathcal{N}^0$ is nonempty, write $C^0=C\cap \mathcal{N}^0.$ We first use the almost product structure on Newton strata to show that there exist mutually isogenous $B$-hypersymmetric points on each  irreducible component of $C^0$. We then show that these points are in the same prime-to-$p$ Hecke orbit. This proves the first statement. Then we check the conditions of Corollary \ref{equiv} are satisfied and use it to conclude $C^0$ is irreducible. 

\begin{proof} \textbf{Step 1.} Denote by $\{C^0_j\}_{j\in J}$ the set of irreducible component of $C^0.$ By the product structure of Newton polygon strata (see \cite[Theorem 5.3]{oort2004foliations} and \cite[\textsection 4]{mantovan2004certain}), for $N, m, n, d$ large enough, there is a finite surjective morphism $$\pi_N:Ig_{m,\mathbb{X}}\times\overline{\mathcal{M}}_{\mathbb{X}}^{n,d}\rightarrow \mathcal{N}^0$$ such that for some closed geometric point $t$ of $\overline{\mathcal{M}}_{\mathbb{X}}^{n,d},$ $\pi_N$ restricts to a finite surjective morphism $$q_m: Ig_{m,\mathbb{X}}\times\{t\}\rightarrow C^0.$$ 

For any fixed $j\in J,$ let $(s_j,t_j)\in Ig_{m,\mathbb{X}}\times\overline{\mathcal{M}}_{\mathbb{X}}$ be such that $(s_j,t_j)\in \pi_N^{-1}(x_0)$ and $s_j\in q_m^{-1}(C^0_j),$ then $y_j=q_m(s_j)$ is a point in $C^0_j$ related to $x_0$ by a quasi-isogeny $\phi.$ Thus we obtain a set of points $\{y_j\in C^0_j\}_j$ that are mutually isogenous. Note that $y_j$ are $B$-hypersymmetric because the property of being hypersymmetric by definition is preserved under isogenies.

\textbf{Step 2.} Now we show that the $y_j$'s in \textbf{Step 1} are related by prime-to-$p$ isogenies. For any $i,j\in J,$ let $A_i$ and $A_j$ denote the abelian varieties with additional structure corresponding to $y_i$ and $y_j$, respectively. By construction, there exists an isogeny $\phi:A_i\rightarrow A_j.$
Since $y_i$ and $y_j$ belong to the same central leaf $C$, there exists an isomorphism $\theta_p:A_i[p^{\infty}]\xrightarrow{\sim} A_j[p^{\infty}].$ Let $\psi_p:A_i[p^{\infty}]\rightarrow A_i[p^{\infty}]$ be given by the composition $\phi_p^{-1}\circ \theta_p,$ where $\phi_p$ denotes the isogeny of Barsotti-Tate groups induced by $\phi$, then we have $\psi_p\in U(\End_B(A_i[p^{\infty}]),*).$ Since $A_i$ is hypersymmetric, the latter group is isomorphic to $I_B\otimes \mathbb{Q}_p$. By weak approximation for algebraic groups, there exists $\psi\in I_B(\mathbb{A}_f^p)$ that induces the same isogeny as $\psi_p$ on the Barsotti-Tate groups. Hence, $\phi\circ \psi:A_i\rightarrow A_j$ is an isogeny that induces an isomorphism on the Barsotti-Tate groups. Thus, any $y_i$ and $y_j$ are in the same prime-to-$p$ Hecke orbit. Therefore, $H^p$ acts transitively on the set of irreducible components of $C^0.$

In particular, since the Hecke symmetries on the connected component of $\mathscr{S}_{K^p}$ comes from the adjoint group $G^{\text{ad}}$ of $G,$ and $G^{\text{der}}$ is a covering of $G^{\text{ad}}$  (see \cite[1.6.5]{moonen1998models} and \cite{deligne1971travaux}), we may take the isogenies in Step 1 to be from $G_{\text{der}}$. Moreover, using weak approximation, we may take these isogenies to be prime-to-$\Sigma$, where $\Sigma$ is as defined in the beginning of Section \ref{section 4}. We conclude by Corollary \ref{equiv}(2) that $C^0$ is irreducible.
\end{proof}

\section{The continuous part at $B$-hypersymmetric points}
In this section, we prove the discrete part of the Hecke orbit conjecture for points that are $B$-hypersymmetric. We do not need to assume working in an irreducible (component of a) Newton stratum. 

\begin{thm}\label{cts}
Let $\mathcal{N}$ be a Newton stratum. Suppose $\mathcal{N}$ contains a $B$-hypersymmetric $x_0$. Let $C=C(x_0)$ denote the central leaf containing $x_0.$ Let $H$ denote the Zariski closure of $H^p(x_0)$ inside $C.$ Then $\dim H=\dim C.$ 
\end{thm}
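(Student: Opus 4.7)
The plan is to work formally at $x_0$ and show that the formal completion $H^{/x_0}$ fills up all of $C^{/x_0}$, which immediately gives $\dim H \geq \dim C$ (the reverse inequality is automatic since $H \subseteq C$). The basic structural input is that, because $x_0$ is $B$-hypersymmetric, Serre--Tate theory endows $C^{/x_0}$ with a \emph{cascade} structure whose two-slope building blocks are Barsotti--Tate formal groups of the form $\mathrm{Hom}(X_\mu, X_\lambda)$ for pairs of isoclinic summands $X_\lambda, X_\mu$ of $X_{x_0}$ with $\lambda > \mu$, compatible with the $B$-action. The local stabilizer at $x_0$, which (by hypersymmetricity together with weak approximation for $I_B$, as used in Lemma \ref{weakapp}) is essentially $I_B(\mathbb{Q}_p)$, acts on $C^{/x_0}$ through linear automorphisms of each two-slope building block, and the prime-to-$p$ Hecke orbit accumulating at $x_0$ produces accumulation points on which this large linearity acts.

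First I would make precise the statement that $H^{/x_0}$ is invariant under a sufficiently large subgroup of the local stabilizer: any element of $I_B(\mathbb{A}_f^p)$ that stabilizes the level structure up to $K^p$ moves $H^p(x_0)$ into itself, and by weak approximation such elements are dense in the $p$-adic local stabilizer. This, together with the $B$-action, forces $H^{/x_0}$ to be stable under the cascade symmetries. Next I would apply Chai's rigidity theorem for formal subschemes stable under many linear automorphisms of a Barsotti--Tate formal group (the PEL-equivariant analog of the Tate-linearity results in \cite{chai2005hecke}): a closed formal $B$-stable subscheme of a two-slope BT formal group stable under an open subgroup of $I_B(\mathbb{Z}_p)$-linear automorphisms is itself a sub-BT formal group compatible with the $B$-structure.

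The third step is the cascade reduction indicated in the paper's overview: by the rigidity of cascades, a closed formal subscheme stable under the full cascade structure is determined by its images in each of the two-slope building blocks, and conversely any collection of compatible sub-BT formal groups in the building blocks lifts uniquely. Thus it suffices to show that, for every pair of slopes $\lambda > \mu$ occurring in $X_{x_0}$, the image of $H^{/x_0}$ in the building block $\mathrm{Hom}(X_\mu, X_\lambda)$ is the full group. For this final step I would invoke absolute irreducibility of the representation of the local stabilizer on the rational Tate module of the building block; this is where the $B$-hypersymmetric hypothesis is essential, since it guarantees the $p$-adic endomorphism algebra is as large as possible so that the stabilizer acts by the full unitary/symplectic $B$-linear group on each isoclinic piece, whose action on $\mathrm{Hom}(X_\mu, X_\lambda)$ has no proper nonzero $B$-stable sub-BT formal subgroup.

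The main obstacle, as I see it, is not the two-slope step (which is a direct adaptation of \cite[\S 7]{chai2005hecke} once the $B$-equivariant rigidity is in hand) but the cascade reduction: one must verify that in the PEL setting the cascade is genuinely built from the two-slope pieces in the sense that a $B$-equivariant, $I_B$-stable closed formal subscheme of $C^{/x_0}$ is uniquely determined by its projections, and that these projections can be prescribed independently. Handling the mixed-slope "extension data" $B$-equivariantly, and checking that the $I_B(\mathbb{Z}_p)$-action is transitive enough on each layer of the cascade to force the inductive step, will be the delicate technical core; everything else should follow once that reduction is set up correctly.
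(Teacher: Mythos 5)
Your proposal follows essentially the same strategy as the paper: stability of $H^{/x_0}$ under the local stabilizer $\mathcal{U}_{x_0}(\mathbb{Z}_p)\cong I_B(\mathbb{Z}_p)$ (via \cite[Prop.\ 6.1]{chai2006hecke}), Chai's rigidity to force the two-slope projections to be sub-BT formal groups (this is Lemma \ref{lempdiv}), and absolute irreducibility of the local stabilizer's representation to upgrade them to the full groups (Lemma \ref{lemequal}, distinguishing the $\operatorname{Sym}^2\mathrm{Std}$ case from the $\mathrm{Std}\otimes\mathrm{Std}$ case according to whether the polarization acts). The one place where your plan is looser than what the paper actually does is the cascade reduction, and your instinct that this is the delicate core is right — but the resolution is cleaner than you anticipate. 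You propose to show fullness of the image of $H^{/x_0}$ in $\mathrm{Hom}(X_\mu,X_\lambda)$ for \emph{every} pair of slopes; however, for a non-adjacent pair there is no canonical projection from the cascade onto $\mathfrak{DE}(i,j)$ — the biextension $\mathfrak{Def}(i,j)$ is a $\mathfrak{DE}(i,j)$-torsor over $\mathfrak{Def}(i,j-1)\times_{\mathfrak{Def}(i+1,j-1)}\mathfrak{Def}(i+1,j)$, so ``the image in $\mathfrak{DE}(i,j)$'' isn't well-defined on the nose. The paper avoids this by proving fullness only at the \emph{adjacent} building blocks $\mathfrak{DE}(i,i+1)_{\mathrm{pdiv}}^{(\lambda)}$ (the bottom row of the cascade) and then running an induction on the number of slopes $r$: once all proper truncations $H^{i,j}$ are full, the biextension structure of $\mathfrak{Def}(1,r)$ over $\mathfrak{Def}(1,r-1)$ and $\mathfrak{Def}(2,r)$ forces $H^{/x_0}$ to be all of $C^{/x_0}$. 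So you don't need to prescribe the non-adjacent layers independently at all — they are determined by the adjacent ones together with the biextension structure, which is both less work and sidesteps the lifting/uniqueness issue you were worried about.
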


\begin{rmk}
We remark that $H$ is smooth. Indeed, by generic smoothness, each connected component of $H$ has a smooth open dense subscheme, but each connected component of $H$ is by definition the union of prime-to-$p$ Hecke translates of that smooth open dense subscheme and therefore is smooth. Furthermore, by Proposition \ref{thmk}, $H$ is connected. Therefore, $H$ lies inside a single connected component of $C$.
\end{rmk}

In the case of Siegel modular varieties, the continuous part (see \cite[Theorem 10.6]{chai2005hecke} uses the ``Hilbert trick'' (\cite[Proposition 9.2]{chai2005hecke}) to find a point in $\mathcal{N}$ isogenous to a product of abelian varieties with at most two slopes - such a point is called ``split", thereby reducing the proof to the case where $x_0$ has at most two slopes. The Hilbert trick does not hold for PEL type in general. We observe that it is not necessary to work with a split point. We work instead with the full cascade structure on the formal completion of a central leaf and reduce to a statement analogous to what appears in the proof in the case of split points.

\subsection{The cascade structure on central leaves.}

In \cite{moonen2004serre}, Moonen generalizes classical Serre-Tate theory to $\mu$-ordinary points on PEL type Shimura varieties. He proves that the local deformation space at a generic point on a PEL type Shimura variety is built up from Barsotti-Tate groups via a system of fibrations over the Witt ring of $k$. For points outside the generic Newton stratum, Chai develops an analogous theory in the case when one restricts to a central leaf (see \cite[Sections 4.3, 4.4]{chai2006hecke}). In this section, we give a brief overview of the theory following loc. cit.

Let $A\rightarrow C$ be the restriction to $C$ of the universal abelian variety, and let $X$ be its Barsotti-Tate group with action by $\mathcal{O}_B\otimes_{\mathbb{Z}}\mathbb{Z}_p$. Then $X$ admits a unique slope filtration $0=X_0\subseteq X_1\subseteq\cdots\subseteq X_r=A[p^{\infty}]$ such that each $Y_i = X_i/X_{i-1}$ is a non-trivial isoclinic Barsotti-Tate group, and that the slopes of the $Y_i$'s appear in descending order (see \cite[Proposition 4.1]{chai2006hecke}). For $1\le i<j\le r,$ we use $\mathfrak{DE}(i,j)$ to denote the deformation space of the filtered Barsotti-Tate group $0\subseteq Y_i\subseteq Y_i\times Y_j$ with action by $\mathcal{O}_B\otimes_{\mathbb{Z}}\mathbb{Z}_p$, and let $\mathfrak{Def}(i,j)$ denote the deformation space of the filtered Barsotti-Tate group $0\subseteq X_i/X_{i-1}\subseteq X_{i+1}/X_{i-1}\subseteq\cdots\subseteq X_j/X_{i-1}$ with action by $\mathcal{O}_B\otimes_{\mathbb{Z}}\mathbb{Z}_p$. By definition, we have $\mathfrak{Def}(i,i+1)=\mathfrak{DE}(i,i+1)$ for any $i.$

The central leaf $C$ is homogeneous in the sense that formal completions of $C$ at any two points are non-canonically isomorphic. Thus it suffices to study what happens at the point $x_0\in C.$ The formal completion $C^{/x_0}$ is contained in the deformation space of the above-mentioned slope filtration, which admits a $r$-cascade structure in the sense of \cite[Definition 2.2.1]{moonen2004serre}. Denote this $r$-cascade by $\mathfrak{MDE}(X)$.
For $1\le i<j\le r,$ the group constituents of $\mathfrak{MDE}(X)$ are given by $\mathfrak{DE}(i,j)$, and the $(i,j)$-truncations are given by $\mathfrak{Def}(i,j).$ The $r$-cascade structure can be expressed in the following commutative diagram:

\adjustbox{scale=0.76,center}{%
    \begin{tikzcd}
& & & \mathfrak{Def}(1,r)\arrow{dl} \arrow{dr}  & & & \\
& & \mathfrak{Def}(1,r-1) \arrow{dl}\arrow{dr} & &  \mathfrak{Def}(2,r)\arrow{dl}\arrow{dr}  & & \\
& \mathfrak{Def}(1,r-2) \arrow{dl}\arrow{dr}& & \mathfrak{Def}(2,r-1)\arrow{dl}\arrow{dr}& & \mathfrak{Def}(3,r)\arrow{dl}\arrow{dr} &\\
\cdots&&\cdots&&\cdots&&\cdots\\
\end{tikzcd}
}

Here each $\mathfrak{Def}(i,j)$ is a bi-extension of $(\mathfrak{Def}(i,j-1), \mathfrak{Def}(i+1,j))$ by $\mathfrak{DE}(i,j)\times_{\text{Spec}(k)} \mathfrak{Def}(i+1,j-1)$.

For a smooth formal group $G$, we denote by $G_{\text{pdiv}}$ its maximal Barsotti-Tate subgroup. We write $\mathfrak{MDE}(X)_{\text{pdiv}}$ to mean the sub-cascade of $\mathfrak{MDE}(X)$ whose group constituents are $\mathfrak{DE}(i,j)_{\text{pdiv}}.$ Then $C^{/x_0}\subseteq \mathfrak{MDE}(X)$ is precisely the sub-cascade of $\mathfrak{MDE}(X)_{\text{pdiv}}$ fixed under the involution induced by the polarization of $x$ \cite[Section 4.4]{chai2006hecke}. We denote this sub-cascade by $\mathfrak{MDE}(X)_{\text{pdiv}}^{\lambda}$; its group constituents are $\mathfrak{DE}(i,j)_{\text{pdiv}}^{\lambda}$ when $i+j=r+1$ and $\mathfrak{DE}(i,j)_{\text{pdiv}}$ otherwise. This cascade structure on $C^{/x_0}$ allows us to reduce the proof of Theorem \ref{cts} to Proposition \ref{twoslope} below, which is an analogue of \cite[Theorem 7.3]{chai2005hecke} used in the Siegel case.

For the rest of this subsection, we study the group constituents of $\mathfrak{MDE}(X)_{\text{pdiv}}^{\lambda}$. Let $X,Y$ be isoclinic Barsotti-Tate groups over $k$ with $\mathcal{O}_B\otimes_{\mathbb{Z}}\mathbb{Z}_p$-action such that the slope of $X$ is smaller than that of $Y.$ Let $\mathfrak{DE}(X,Y)$ denote the deformation space of the filtered Barsotti-Tate group $0\subseteq X\subseteq X\times Y.$

\begin{notation}
If $G$ is a Barsotti-Tate group, write $M(G)$ for the Cartier module of $G.$ It can be equipped with the structure of a $V$-isocrystal, where $V$ denotes the dual operator to the Frobenius $F.$ For a polarization $\lambda,$ we write $G^{(\lambda)}$ to mean $G^{\lambda}$ when the induced action of $\lambda$ on $G$ is nontrivial and $G$ otherwise. We also recall that $W=W(k)$ denotes the Witt ring of $k$ and $L$ denotes the fraction field of $W.$
\end{notation}

\begin{prop}\label{twoslope}\begin{enumerate}
    \item There is a natural isomorphism of $V$-isocrystals $$M(\mathfrak{DE}(X,Y)_{\text{pdiv}})\otimes_{W}L\cong \Hom_{W}(M(X),M(Y))\otimes_{W}L.$$
    \item Let $\lambda$ be a principal quasi-polarization on $X\times Y.$ There is a natural isomorphism of $V$-isocrystals $$M(\mathfrak{DE}(X,Y)_{\text{pdiv}}^{\lambda})\otimes_{W}L\cong \Hom_{W}^{\lambda}(M(X),M(Y))\otimes_{W}L,$$ where by abuse of notation we use $\lambda$ to denote the involutions induced by $\lambda$ on the respective spaces.
\end{enumerate}\end{prop}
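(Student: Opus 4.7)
The plan is to use Cartier/Dieudonn\'e theory to translate the geometric assertion about deformation spaces into an algebraic statement about Hom of $V$-isocrystals, closely mirroring the two-slope computation in \cite[Theorem 7.3]{chai2005hecke}.

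First, I would interpret $\mathfrak{DE}(X,Y)$ as a deformation functor of extensions: it pro-represents the functor sending an artinian local $W$-algebra $R$ with residue field $k$ to the set of isomorphism classes of short exact sequences
\[0 \to X_R \to E \to Y_R \to 0\]
of Barsotti-Tate groups with $\mathcal{O}_B\otimes_{\mathbb{Z}}\mathbb{Z}_p$-action deforming the trivial extension $0 \to X \to X\times Y \to Y \to 0$. Because $X$ and $Y$ are isoclinic of distinct slopes, any $\mathcal{O}_B$-equivariant homomorphism between their deformations is rigid, so this functor is represented by a smooth commutative formal group. The subgroup $\mathfrak{DE}(X,Y)_{\text{pdiv}}$ is by definition its maximal Barsotti-Tate subgroup.

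For part (1), I would compute the Cartier module by identifying first-order deformations with an $\mathrm{Ext}^1$-group. Since the slope of $X$ is strictly smaller than that of $Y$, the $V$-isocrystal $\Hom_W(M(X),M(Y))\otimes_W L$ is of pure positive slope equal to $\mathrm{slope}(Y)-\mathrm{slope}(X)$, so its associated formal group is already Barsotti-Tate. A standard Cartier-theoretic computation identifies first-order deformations of the trivial extension with $\Hom_W(M(X),M(Y))$ modulo a subspace that becomes trivial after inverting $p$, and this identification is manifestly compatible with Frobenius and Verschiebung, giving an isomorphism of $V$-isocrystals. Integrating this first-order statement over the whole formal group of deformations and passing to the $p$-divisible part yields the natural isomorphism in the statement.

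For part (2), the principal quasi-polarization $\lambda$ on $X\times Y$ must pair $X$ with $Y^{(\lambda)}$, since $X$ and $Y$ have distinct slopes and the polarization shifts slopes by $1$. Thus $\lambda$ induces an involution on $\mathfrak{DE}(X,Y)_{\mathrm{pdiv}}$ that sends an extension to its dual under $\lambda$. Under the isomorphism of part (1), this involution corresponds to the involution on $\Hom_W(M(X),M(Y))$ induced by the pairings $M(X)\times M(X^{(\lambda)})\to W$ and $M(Y)\times M(Y^{(\lambda)})\to W$ coming from $\lambda$. Since $2\in L^\times$, the fixed-point functor is exact after inverting $p$, and taking fixed points on both sides of the isomorphism in (1) yields the claim. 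The main obstacle I anticipate is the careful bookkeeping of the $\mathcal{O}_B\otimes\mathbb{Z}_p$-action: the $\mathrm{Hom}$-spaces must be interpreted in the category of $\mathcal{O}_B$-equivariant $V$-isocrystals, and in part (2) the polarization-induced involution must be twisted by the positive involution $*$ on $\mathcal{O}_B$ to be compatible with the induced Rosati involution.
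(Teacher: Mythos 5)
The paper does not reprove this statement at all: its proof is a one-line citation of \cite[Theorem 9.6]{chai2005canonical}, where exactly these two isomorphisms are established (without the $\mathcal{O}_B\otimes_{\mathbb{Z}}\mathbb{Z}_p$-action), together with the remark that the equivariant refinement is immediate. Your proposal instead sets out to reprove Chai's theorem from scratch. The overall strategy you describe --- view $\mathfrak{DE}(X,Y)$ as the formal group of extension deformations, compute its Cartier module in terms of $\Hom$ of Dieudonn\'e modules up to isogeny, and obtain (2) by taking fixed points of the involution induced by $\lambda$ (which indeed must interchange $X$ and $Y$ since duality sends slope $s$ to $1-s$ and the slopes are distinct) --- is the same route Chai takes, so you are reconstructing the cited input rather than finding an alternative; your closing remark about $\mathcal{O}_B$-equivariant bookkeeping is exactly the point the paper dispatches with ``clearly follows.''

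As a self-contained argument, however, your sketch has a genuine gap at its central step. You propose to identify \emph{first-order} deformations of the trivial extension with $\Hom_W(M(X),M(Y))$ ``modulo a subspace that becomes trivial after inverting $p$'' and then to ``integrate'' this over the formal group. But the first-order deformation space (the tangent space of $\mathfrak{DE}(X,Y)$) is a $k$-vector space: it carries no $W$-module structure and no Frobenius/Verschiebung, so it cannot determine the $V$-isocrystal $M(\mathfrak{DE}(X,Y)_{\text{pdiv}})\otimes_W L$, and there is no formal ``integration'' that recovers the Cartier module from tangent data. The substance of the cited theorem is precisely the computation of the full Cartier module of the extension formal group (essentially a $V$-adically completed $\Hom$ between the Cartier modules of $X$ and $Y$) and the identification of its maximal $p$-divisible subgroup up to isogeny; this is the step you label ``standard Cartier-theoretic computation,'' and it is where all the work lies. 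The fixed-point argument for (2) is fine once (1) is in place, but to make the proof complete you must either carry out that Cartier-module computation or, as the paper does, invoke \cite[Theorem 9.6]{chai2005canonical} directly and only check $\mathcal{O}_B$-equivariance and compatibility with the Rosati involution.
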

\begin{proof}
This clearly follows from \cite[Theorem 9.6]{chai2005canonical}, where the same statements are proved without assuming the presence of $\mathcal{O}_B\otimes_{\mathbb{Z}}\mathbb{Z}_p$-action.
\end{proof}

\subsection{Proof of the continuous part at $B$-hypersymmetric points.}

Notations as in Theorem \ref{cts}. The key idea in proving Theorem \ref{cts} is to study the action of the local stabilizer group at $x_0$ and show that the formal completion $H^{/x_0}\subseteq C^{/x_0}$ in fact coincides with $C^{/x_0}.$

\begin{defn}{\cite[Section 6.1]{chai2006hecke}} Let $x=[(A_x,\lambda_x,\iota_x,\eta_x)]\in\mathscr{S}(k)$ be a geometric point. Let $\mathcal{U}_x$ be the unitary group attached to the semisimple algebra with involution $(\End_{\mathcal{O}_B}(A_x)\otimes_{\mathbb{Z}}\mathbb{Q},*),$ where $*$ is the Rosati involution attached to $\lambda_x.$ We call $\mathcal{U}_x(\mathbb{Z}_p)$ the \emph{local stabilizer group at $x$}.
\end{defn}

By the assumption of Theorem \ref{cts}, $x_0$ is $B$-hypersymmetric, so $\mathcal{U}_{x_0}(\mathbb{Z}_p)$ coincides with $I_B(\mathbb{Z}_p)$ for the group $I_B$ defined in Lemma 5.2. By deformation theory, there is a natural action of $\mathcal{U}_{x_0}(\mathbb{Z}_p)$ on the formal completion $\mathscr{S}^{/x_0}$ and hence on its closed formal subschemes $\mathfrak{Def}(1,r)_{\text{pdiv}}^{(\lambda)}$ and $H^{/x_0}.$

Recall that the maps in the cascade structure of $\mathfrak{MDE}(X)$ are given by \cite[Proposition 2.1.9]{moonen2004serre} (see also \cite[2.3.6]{moonen2004serre}). These maps give rise to a group action of $\mathcal{U}_{x_0}(\mathbb{Z}_p)$ on each $\mathfrak{Def}(i,j)_{\text{pdiv}}$ in an inductive way. We describe this action in the next paragraph. 

Suppose that for the triple $\mathfrak{Def}(i,j-1)_{\text{pdiv}}\xleftarrow{p}\mathfrak{Def}(i,j)_{\text{pdiv}}\xrightarrow{r}\mathfrak{Def}(i+1,j)_{\text{pdiv}}$ we have an action of $\mathcal{U}_{x_0}(\mathbb{Z}_p)$ on $\mathfrak{Def}(i,j)_{\text{pdiv}}.$ Define an action of $\mathcal{U}_{x_0}(\mathbb{Z}_p)$ on $\mathfrak{Def}(i,j-1)_{\text{pdiv}}$ by $u\cdot \mathfrak{X}:=p(u\cdot\tilde{\mathfrak{X}})$ for any $u\in \mathcal{U}(\mathbb{Z}_p)$ and $\mathfrak{X}\in \mathfrak{Def}(i,j-1)_{\text{pdiv}},$ where $\tilde{\mathfrak{X}}$ is any pre-image of $\mathfrak{X}$ in $\mathfrak{Def}(i,j)_{\text{pdiv}}$ under $p.$ For any other choice of pre-image $\tilde{\mathfrak{X}}',$ by the biextension structure on $\mathfrak{Def}(i,j)_{\text{pdiv}}$, $\tilde{\mathfrak{X}}$ and $\tilde{\mathfrak{X}}'$ are in the same $\mathfrak{DE}(i+1,j-1)_{\text{pdiv}}$-orbit in $\mathfrak{Def}(i,j)_{\text{pdiv}}$. Hence, $u\cdot\tilde{\mathfrak{X}}$ and $u\cdot\tilde{\mathfrak{X}}'$ are also in the same $\mathfrak{DE}(i+1,j-1)_{\text{pdiv}}$-orbit, which implies that their images under $p$ coincide. Therefore this action of $\mathcal{U}_{x_0}(\mathbb{Z}_p)$ on $\mathfrak{Def}(i,j-1)_{\text{pdiv}}$ is well-defined. One defines the action on $\mathfrak{Def}(i+1,j)_{\text{pdiv}}$ in the obvious analogous way.

In the following, we study how $H^{/x_0}$ behaves with respect to the cascade structure of $C^{/x_0}\cong \mathfrak{MDE}(X)_{\text{pdiv}}^{\lambda}.$

For $1\le i \le r-1,$ let $H_{i,i+1}$ denote the image of $H^{/x_0}$ inside $\mathfrak{DE}(i,i+1)_{\text{pdiv}}^{(\lambda)}$ under the maps in the cascade structure of $C^{/x_0}.$ 

For $1\le i<j-1\le r$, we use the biextension structure of $\mathfrak{Def}(i,j)_{\text{pdiv}}^{(\lambda)}$ and define $H_{i,j}$ as the quotient $\mathfrak{Def}(i,j)^{(\lambda)}_{\text{pdiv}}/(H_{i,i+1}\times H_{j-1,j})$. By \cite[Section2]{mumford1969bi}, there is a non-canonical injection $\mathfrak{DE}(i,j)_{\text{pdiv}}^{(\lambda)}\times_{\text{Spec}(k)} \mathfrak{Def}(i+1,j-1)\hookrightarrow H_{i,j}$.

\begin{lem}\label{lempdiv}
For $1\le i\le r-1,$ $H_{i,i+1}$ is a formal Barsotti-Tate subgroup of $\mathfrak{DE}(i,i+1)_{\text{pdiv}}^{(\lambda)}$. 
\end{lem}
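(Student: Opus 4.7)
The plan is to deduce the lemma from the interaction of $H^{/x_0}$ with the action of the local stabilizer group $\mathcal{U}_{x_0}(\mathbb{Z}_p)$ and a local rigidity statement for Barsotti--Tate formal groups in the spirit of Chai. Write $I := \mathcal{U}_{x_0}(\mathbb{Z}_p)$; because $x_0$ is $B$-hypersymmetric, $I$ coincides with $I_B(\mathbb{Z}_p)$ for the $\mathbb{Q}$-algebraic group $I_B$ of Lemma~\ref{weakapp}.

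First I would show that $H^{/x_0}$ is stable under the natural action of $I$ on $C^{/x_0}$. The point is that the $I$-action on the formal neighborhood is induced by automorphisms of the Barsotti--Tate group of $A_{x_0}$ with its $\mathcal{O}_B$-action and polarization; by hypersymmetry every such $p$-adic automorphism comes from $I_B(\mathbb{Q}_p)$, and by weak approximation for $I_B$ (Lemma~\ref{weakapp}) can be approximated in $I_B(\mathbb{A}_f^p)$. Consequently the orbit of any prime-to-$p$ Hecke translate of $x_0$ under $I$ is again contained in the Zariski closure of $H^p(x_0)$, so $H^{/x_0}$ is $I$-stable. Because the cascade projection $C^{/x_0}\to \mathfrak{DE}(i,i+1)^{(\lambda)}_{\text{pdiv}}$ is $I$-equivariant by construction of the $I$-action in the discussion preceding the lemma, the image $H_{i,i+1}$ is a closed formal subscheme of $\mathfrak{DE}(i,i+1)^{(\lambda)}_{\text{pdiv}}$ stable under $I$.

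Next I would use Proposition~\ref{twoslope} to understand the $I$-action on $\mathfrak{DE}(i,i+1)^{(\lambda)}_{\text{pdiv}}$: its $V$-isocrystal identifies (up to isogeny) with $\Hom_W^{(\lambda)}(M(Y_i),M(Y_{i+1}))$, on which $I$ operates through the two-sided action of its images in $\Aut(Y_i)$ and $\Aut(Y_{i+1})$. In particular, the diagonal copy of $\mathbb{Z}_p^{\times}\subseteq I$ (which exists by hypersymmetry, since the endomorphism algebra of each isoclinic factor contains the appropriate unramified local field) acts on $\mathfrak{DE}(i,i+1)^{(\lambda)}_{\text{pdiv}}$ through the natural scaling of this Barsotti--Tate formal group. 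Finally, I would appeal to Chai's local rigidity theorem for Barsotti--Tate formal groups (the analogue of \cite[Theorem~4.3]{chai2006hecke} in this setting): any closed formal subscheme of a Barsotti--Tate formal group over $k$ which is stable under the scaling action of an open subgroup of $\mathbb{Z}_p^{\times}$ is itself a formal Barsotti--Tate subgroup. Applying this to $H_{i,i+1}$ yields the lemma.

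The main obstacle is Step~1, namely pinning down precisely that $H^{/x_0}$ is genuinely $I$-stable rather than only stable under the abstract Hecke action: this is where $B$-hypersymmetricity and weak approximation interact, and care must be taken because the $I$-action on $\mathscr{S}^{/x_0}$ is defined via deformation theory while the Hecke action is global in nature. Once this compatibility is in hand, the application of local rigidity in Step~3 is essentially formal, provided that the scalar $\mathbb{Z}_p^{\times}$-subgroup acts through the expected homothety on $\mathfrak{DE}(i,i+1)^{(\lambda)}_{\text{pdiv}}$, which one verifies on Cartier modules via Proposition~\ref{twoslope}.
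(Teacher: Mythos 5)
Your overall route is the one the paper takes: show $H^{/x_0}$, hence each image $H_{i,i+1}$, is stable under the local stabilizer group $\mathcal{U}_{x_0}(\mathbb{Z}_p)$, then invoke Chai's rigidity theorem for $p$-divisible formal groups. Two points, however, are genuine gaps as written. First, the rigidity statement you quote is false without reducedness and irreducibility hypotheses: the union of two distinct $p$-divisible formal subgroups, or a non-reduced infinitesimal thickening of one, is a closed formal subscheme stable under scaling by all of $\mathbb{Z}_p^{\times}$ but is not a $p$-divisible subgroup. This is precisely why the paper records (in the remark following Theorem~\ref{cts}) that $H$ is smooth, so that $H^{/x_0}$ is formally smooth and irreducible, and hence its scheme-theoretic image $H_{i,i+1}$ is reduced and irreducible; only then does \cite[Theorem 4.3]{chai2008rigidity} apply. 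Your proposal never supplies this input, and without it the final step does not go through.

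Second, your source of the scaling action is not literally correct: the diagonal scalars do not lie in the unitary group $\mathcal{U}_{x_0}$, since $u u^{*}=u^{2}=1$ forces $u=\pm 1$. What hypersymmetry actually provides are elements of $\mathcal{U}_{x_0}(\mathbb{Z}_p)$ acting by a scalar $c$ on $Y_{i+1}$, by $(c^{*})^{-1}$ on its polarization-dual isoclinic factor, and by the identity elsewhere; on $\Hom_W^{(\lambda)}(M(Y_i),M(Y_{i+1}))$ these act by multiplication by $c$ (or by $c^{-2}$ in the self-paired case $i+(i+1)=r+1$), which still yields scalings by an open subgroup of $\mathbb{Z}_p^{\times}$, so the idea can be repaired. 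A smaller remark: your Step 1 does not need hypersymmetry or weak approximation at all---stability of $H^{/x_0}$ under $\mathcal{U}_{x_0}(\mathbb{Z}_p)$ is the local stabilizer principle \cite[Proposition 6.1]{chai2006hecke}, valid at any point of a Hecke-stable closed subvariety, and this is exactly what the paper cites; the paper then feeds the full $\mathcal{U}_{x_0}(\mathbb{Z}_p)$-action, viewed as a homomorphism to the unitary group attached to $\End(\mathfrak{DE}(i,i+1)_{\text{pdiv}})^{(\lambda)}\otimes_{\mathbb{Z}_p}\mathbb{Q}_p$, directly into the rigidity theorem rather than isolating a scalar subgroup.
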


\begin{proof}By definition, $H$ is stable under all prime-to-$p$ Hecke correspondences. Then \cite[Proposition 6.1]{chai2006hecke} implies $H^{/x_0}$ is stable under the action of $\mathcal{U}_{x_0}(\mathbb{Z}_p);$ hence, so are the $H_{i,i+1}$'s. As explained in Remark 6.2, $H$ is smooth, so $H^{/x_0}$ is formally smooth and hence irreducible. The action of $\mathcal{U}_{x_0}(\mathbb{Z}_p)$ on $\mathfrak{DE}(i,i+1)_{\text{pdiv}}^{(\lambda)}$ gives a homomorphism from  $\mathcal{U}_{x_0}(\mathbb{Z}_p)\otimes_{\mathbb{Z}_p}\mathbb{Q}_p$ to the unitary group attached to $\text{End}(\mathfrak{DE}(i,i+1)_{\text{pdiv}})^{(\lambda)}\otimes_{\mathbb{Z}_p}\mathbb{Q}_p$. Applying \cite[Theorem 4.3]{chai2008rigidity} finishes the proof.
\end{proof}

\begin{lem}\label{lemequal}
For $1\le i \le r-1,$ $H_{i,i+1}=\mathfrak{DE}(i,i+1)_{\text{pdiv}}^{(\lambda)}.$
\end{lem}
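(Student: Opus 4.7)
The plan is to combine the Cartier-theoretic identification in Proposition \ref{twoslope}(2) with a rigidity argument based on the absolute irreducibility of the action of the local stabilizer group at the $B$-hypersymmetric point $x_0$. Fix $i$ with $1\le i\le r-1$. By Lemma \ref{lempdiv}, $H_{i,i+1}$ is a formal Barsotti-Tate subgroup of $\mathfrak{DE}(i,i+1)_{\text{pdiv}}^{(\lambda)}$, and it is stable under the action of $\mathcal{U}_{x_0}(\mathbb{Z}_p)$ induced from the action on $\mathscr{S}^{/x_0}$, since the latter preserves $H^{/x_0}$ by \cite[Proposition 6.1]{chai2006hecke}. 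Passing to Cartier modules and tensoring with $L$ identifies $M(H_{i,i+1})\otimes_{W}L$ with a $\mathcal{U}_{x_0}(\mathbb{Z}_p)$-stable sub-$V$-isocrystal of $\Hom_{W}^{(\lambda)}(M(X_i/X_{i-1}), M(X_{i+1}/X_i))\otimes_{W}L$.

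Next I would exploit the $B$-hypersymmetricity of $x_0$: the defining identification $\End_{\mathcal{O}_B}(A_{x_0})\otimes\mathbb{Q}_p \cong \End_{\mathcal{O}_B}(A_{x_0}[p^\infty])\otimes\mathbb{Q}_p$ ensures that $\mathcal{U}_{x_0}(\mathbb{Q}_p) = I_B(\mathbb{Q}_p)$ coincides with the full unitary group attached to the Barsotti-Tate endomorphism algebra with its Rosati involution. Under the slope decomposition of $A_{x_0}[p^\infty]$, the action on each $M(X_j/X_{j-1})$ is the tautological action of the corresponding factor, and the induced representation on $\Hom_{W}^{(\lambda)}(M(X_i/X_{i-1}), M(X_{i+1}/X_i))\otimes_{W}L$ is absolutely irreducible, by an argument parallel to the one in \cite[Section 7]{chai2005hecke} adapted to the present Hom-space with its $\lambda$-involution (the key point being that the two pieces $X_i/X_{i-1}$ and $X_{i+1}/X_i$ have distinct slopes, so the Hom-space carries no central elements other than scalars). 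Consequently $M(H_{i,i+1})\otimes_{W}L$ is either zero or the entire ambient Hom-space, and in the latter case $H_{i,i+1}=\mathfrak{DE}(i,i+1)_{\text{pdiv}}^{(\lambda)}$ as desired.

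The main obstacle, and where I would spend the most effort, is to rule out the trivial case $H_{i,i+1}=0$. My plan is to produce an explicit prime-to-$p$ Hecke translate of $x_0$ whose image in $\mathfrak{DE}(i,i+1)_{\text{pdiv}}^{(\lambda)}$ is nontrivial, using the weak approximation Lemma \ref{weakapp}: one constructs prime-to-$p$ self-quasi-isogenies of $A_{x_0}$ whose $p$-component agrees with prescribed elements of $\mathcal{U}_{x_0}(\mathbb{Z}_p)$ selected so as to move $x_0$ in the $(i,i+1)$-direction of the cascade. A robust alternative, which I would try in parallel, is a dimension count through the cascade: because $\mathfrak{MDE}(X)_{\text{pdiv}}^{\lambda}$ is built inductively as biextensions of the $\mathfrak{DE}(i,i+1)_{\text{pdiv}}^{(\lambda)}$ together with the lower truncations, if $H_{i,i+1}=0$ for some $i$ then projecting $H^{/x_0}$ through the cascade forces a strict dimension drop, and combined with the absolute irreducibility in the previous paragraph this would ultimately contradict the fact that $H^{/x_0}$ is nonempty and stable under the full local stabilizer. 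Making the non-vanishing argument precise for each $i$ separately, and tracking how the different two-slope pieces interact in the biextension structure, is the delicate point that will require the most care.
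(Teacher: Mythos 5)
Your overall strategy is the paper's: stability of $H^{/x_0}$ under the local stabilizer group, Lemma \ref{lempdiv} to make $H_{i,i+1}$ a formal Barsotti--Tate subgroup, hypersymmetry to identify $\mathcal{U}_{x_0}(\mathbb{Z}_p)$ with $I_B(\mathbb{Z}_p)$, Proposition \ref{twoslope} to pass to the Hom-isocrystal, and absolute irreducibility of the resulting representation to force $H_{i,i+1}$ to be all of $\mathfrak{DE}(i,i+1)_{\text{pdiv}}^{(\lambda)}$.

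Two sub-steps differ from the paper in substance. First, your justification of irreducibility (``the Hom-space carries no central elements other than scalars'' because the slopes are distinct) is not a proof; the paper instead identifies the representation concretely: the action on $M(Y_i)\otimes_W\overline{L}$ is the standard representation of $GL_{\dim Y_i}$, and then one splits into cases according to whether $\lambda$ is nontrivial on $\mathfrak{DE}(i,i+1)_{\text{pdiv}}$ (i.e.\ whether it pairs $Y_i$ with $Y_{i+1}$): in the polarized case the Hom-space with its $\lambda$-condition is $\mathrm{Sym}^2$ of the standard representation, and in the unpolarized case it is the external tensor product of the two standard representations; both are irreducible. You would need to supply exactly this case analysis (it is where the hypersymmetry is actually used quantitatively), not the heuristic about central elements. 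Second, you spend most of your effort on ruling out $H_{i,i+1}=0$, proposing a weak-approximation construction of Hecke translates and a dimension count, neither of which is carried out; the paper does not argue this point at all, simply asserting that $M(H_{i,i+1})\otimes_W\overline{L}$ is a non-trivial subrepresentation. So on this point you are flagging a genuine subtlety that the paper glosses over, but as written your proposal leaves it open, so the argument is not complete as it stands: either make one of your two non-vanishing strategies precise, or argue (as the paper implicitly does) why the projection of $H^{/x_0}$ to each two-slope constituent cannot vanish.
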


\begin{proof}
The natural action of $\mathcal{U}_{x_0}(\mathbb{Z}_p)$ on $H_{i,i+1}$ restricts to an action on $M(Y_i)\otimes_W L$. Since $x_0$ is $B$-hypersymmetric, we have $\mathcal{U}_{x_0}(\mathbb{Z}_p)=I_B(\mathbb{Z}_p).$ Thus $\overline{\mathcal{U}_{x_0}(\mathbb{Z}_p)}$ acting on $M(Y_i)\otimes_W \overline{L}$ is isomorphic to the standard representation $Std$ of $GL_{\dim Y_i}.$ 

If the polarization $\lambda:=\lambda_{x_0}$ is nontrivial on $\mathfrak{DE}(i,i+1)_{\text{pdiv}}$, we obtain a duality pairing between $M(Y_i)$ and $M(Y_{i+1}).$ In this case, the action of $\overline{\mathcal{U}_{x_0}(\mathbb{Z}_p)}$ on $M(Y_{i+1})\otimes_W \overline{L}$ is isomorphic to the dual of $Std.$ Thus, the action of $\overline{\mathcal{U}_{x_0}(\mathbb{Z}_p)}$ on $\Hom_{W}^{\lambda}(M(Y_i),M(Y_{i+1}))\otimes_{W}\overline{L}$ is isomorphic to $\text{Sym}^2Std,$ which is irreducible. By Proposition \ref{twoslope}(2), the action of $\overline{\mathcal{U}_{x_0}(\mathbb{Z}_p)}$ on  $M(\mathfrak{DE}(i,i+1)_{\text{pdiv}}^{\lambda})\otimes_{W}\overline{L}$ is isomorphic to an irreducible representation.

On the other hand, if the polarization is trivial on $\mathfrak{DE}(i,i+1)_{\text{pdiv}}$, the action of $\overline{\mathcal{U}_{x_0}(\mathbb{Z}_p)}$ on $\Hom_{W}(M(Y_i),M(Y_{i+1}))\otimes_{W}\overline{L}$ is the tensor product representation of the standard representations of $GL_{\dim Y_i}$ and $GL_{\dim Y_{i+1}}$, which is irreducible. By Proposition \ref{twoslope}(1), the action of $\overline{\mathcal{U}_{x_0}(\mathbb{Z}_p)}$ on $M(\mathfrak{DE}(i,{i+1})_{\text{pdiv}})\otimes_{W}\overline{L}$ is again isomorphic to an irreducible representation.

By Lemma \ref{lempdiv}, it makes sense to consider the Cartier module $M(H_{i,{i+1}})$. We have $M(H_{i,{i+1}})\otimes_W \overline{L}$ as a non-trivial sub-representation of $M(\mathfrak{DE}(i,{i+1})_{\text{pdiv}})^{(\lambda)}\otimes_W\overline{L}$ which is irreducible, so we obtain the desired equalities.
\end{proof}

\begin{proof}[Proof of Theorem \ref{cts}]
We show inductively that Lemma \ref{lemequal} implies $H^{x_0}=C^{/x_0}.$

When $r=2,$ $C^{/x_0}=\mathfrak{DE}(1,2)_{\text{pdiv}}^{\lambda}$ and there is nothing to prove.

When $r=3,$ $C^{/x_0}=\mathfrak{Def}(1,3)_{\text{pdiv}}^{\lambda}$ is a biextension of $(\mathfrak{DE}(1,2)_{\text{pdiv}},\mathfrak{DE}(2,3)_{\text{pdiv}})$ by $\mathfrak{DE}(1,3)_{\text{pdiv}}^{\lambda}.$ The equalities in Lemma 6.5 induces an isomorphism realized via $H_{1,3}=\mathfrak{Def}(1,3)_{\text{pdiv}}^{\lambda}/(H_{1,2}\times H_{2,3})=\mathfrak{Def}(1,3)_{\text{pdiv}}^{\lambda}/(\mathfrak{DE}(1,2)_{\text{pdiv}}\times \mathfrak{DE}(2,3)_{\text{pdiv}})\cong \mathfrak{DE}(1,3)_{\text{pdiv}}^{\lambda}.$ Thus, the only candidate for the subscheme $H^{/x_0}$ of $\mathfrak{Def}(1,3)_{\text{pdiv}}^{\lambda}$ is $\mathfrak{Def}(1,3)_{\text{pdiv}}^{\lambda}$ itself.

When $r\ge 3,$ let $H^{i,j}$ denote the image of $H^{/x_0}$ in $\mathfrak{Def}(i,j)_{\text{pdiv}}^{\lambda}.$ By induction we have equalities $H^{i,j}=\mathfrak{Def}(i,j)_{\text{pdiv}}^{(\lambda)}$ for all $(i,j)\neq (1,r).$ In particular, $H^{1,r-1}=\mathfrak{Def}(1,r-1)_{\text{pdiv}}$ and $H^{2,r}=\mathfrak{Def}(2,r)_{\text{pdiv}}$ together imply $$H
_{1,r}=\mathfrak{Def}(1,r)_{\text{pdiv}}^{\lambda}/(\mathfrak{DE}(1,r-1)_{\text{pdiv}}\times \mathfrak{DE}(2,r)_{\text{pdiv}}),$$ so we obtain $H^{/x_0}=\mathfrak{Def}(1,r)_{\text{pdiv}}^{\lambda}=C^{/x_0}.$
\end{proof}

\section{Proof of the main theorem}\label{section7}
The main theorem of our paper is the following.

\begin{thm}\label{mainthm}
Let $\mathcal{D}=(B,\mathcal{O}_B,*,V,\langle\cdot,\cdot\rangle, h)$ be a Shimura datum of PEL type A or C, for which $p$ is an unramified prime of good reduction. Let $F$ be the center of $B$ and $F_0$ its maximal totally real subfield. Let $\mathscr{S}$ denote the reduction modulo $p$ of the Shimura variety associated to $\mathcal{D}$ of level $K^p\subseteq G(\mathbb{A}_f^p)$. Let $\mathcal{N}$ be a Newton stratum on $\mathscr{S}$. Assume \begin{enumerate}
    \item $\mathcal{N}$ contains a hypersymmetric point $x_0$,
    \item either (i) $p$ is totally split in $F/F_0$ and the Newton polygon of $\mathcal{N}$ satisfies the condition (*) in Definition \ref{conditionstar}; or (ii) $p$ is inert in $F/F_0.$
\end{enumerate}
Write $\mathcal{N}^0$ for the irreducible component of $\mathcal{N}$ containing $x_0$. Then $H^p(x)$ is dense in $C(x)\cap \mathcal{N}^0$ for every $x\in\mathcal{N}^0(k).$ Moreover, if $\mathcal{N}$ is not the basic stratum, then $C(x)\cap \mathcal{N}^0$ is irreducible.
\end{thm}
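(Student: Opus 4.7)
The irreducibility claim in the non-basic case is immediate from Theorem \ref{irred}, so the task reduces to proving density. The proof proceeds in two stages: first at the chosen $B$-hypersymmetric point $x_0$ by combining Theorems \ref{irred} and \ref{cts}, then at an arbitrary $x\in\mathcal{N}^0(k)$ by passing to the $F_0$-Shimura variety. For the first stage, Theorem \ref{cts} and its proof via the cascade structure show that the formal completion of $\overline{H^p(x_0)}$ at $x_0$ equals all of $C(x_0)^{/x_0}$; since $\overline{H^p(x_0)}$ is closed in $C(x_0)$ and $C(x_0)$ is smooth, this forces $\overline{H^p(x_0)}$ to contain the connected component of $C(x_0)$ through $x_0$, which lies in $\mathcal{N}^0$. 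The set $\overline{H^p(x_0)}$ is $H^p$-stable and, by Theorem \ref{irred}, $H^p$ acts transitively on $\pi_0(C(x_0)\cap\mathcal{N}^0)$, so the closure must contain every connected component of $C(x_0)\cap\mathcal{N}^0$. This yields density of $H^p(x_0)$ in $C(x_0)\cap\mathcal{N}^0$, and the same argument applies at any other $B$-hypersymmetric point of $\mathcal{N}^0$, since the central leaves through such points are all isomorphic via finite isogeny correspondences.

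For the second stage, given $x\in\mathcal{N}^0(k)$, set $Z:=\overline{H^p(x)}\cap C(x)\cap\mathcal{N}^0$, a closed $H^p$-stable subset of $C(x)\cap\mathcal{N}^0$. It suffices to produce a $B$-hypersymmetric point $y\in Z$: then $C(y)=C(x)$ and $H^p(y)\subseteq Z$, so applying the first stage at $y$ forces $Z=C(x)\cap\mathcal{N}^0$, which is exactly the density assertion. To produce such a $y$, I pass to the Shimura variety $\mathscr{S}_{F_0}$ attached to the maximal totally real subfield $F_0\subseteq F$, which is of Hilbert modular type and for which the Hecke orbit conjecture is known by C.-F.~Yu \cite{yu2004discrete}. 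In type C one has $F=F_0$ and the classical Hilbert trick of \cite[\S 9]{chai2005hecke} produces such a $y$ directly. In type A, I compare $\mathscr{S}$ with $\mathscr{S}_{F_0}$ through the natural forgetful correspondence on PEL data, apply Yu's density statement to find an $F_0$-hypersymmetric point in the image of $Z$, and lift it back, using hypothesis (2) together with Proposition \ref{inert}, to a $B$-hypersymmetric point of $Z$.

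The main obstacle is the type A reduction in the second stage. It requires a careful comparison of central leaves, Newton strata and Hecke actions between $\mathscr{S}$ and $\mathscr{S}_{F_0}$ so that Yu's density statement actually transfers upstairs, together with control over the lift of an $F_0$-hypersymmetric point back to $\mathscr{S}$. Hypothesis (2) is exactly what makes the lift possible: case (ii) is handled by Proposition \ref{inert}(1), and case (i) by Proposition \ref{inert}(2), for which condition (*) is the necessary input. The two examples following Proposition \ref{inert} show that condition (*) cannot be dropped in the split case, pinpointing why the hypothesis is stated the way it is.
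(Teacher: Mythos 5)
Your overall skeleton agrees with the paper's: the non-basic irreducibility is indeed just Theorem \ref{irred}, the hypersymmetric case is Theorems \ref{irred} and \ref{cts} combined, and the whole problem reduces to producing a $B$-hypersymmetric point in $\overline{H^p(x)}\cap C^0(x)$ (equivalently, to transferring density from the totally real situation). The gap is in how you dispose of the totally real/type C step and in the direction in which you invoke Proposition \ref{inert}. The Shimura variety $\mathscr{S}_{F_0}$ obtained by restricting the action to $F_0$ is a general PEL type C variety, not a Hilbert modular variety, so Yu's theorem does not apply to it; and in type C the Hilbert trick does not ``directly'' produce a hypersymmetric point of $Z$. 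This is exactly why the paper needs all of Section 7.1 (Proposition \ref{B=F}): one first shows $\overline{H^p(x)}$ contains a basic point $z$ (via Ekedahl--Oort strata and finiteness of basic $\ell$-adic orbits), then constructs, using Lemmas \ref{fields} and \ref{Eprime}, an auxiliary product of totally real fields $L=\prod L_{i,j}$ inside $\End^0_F(A_z)$ whose factors are \emph{inert at $p$ over $F$ by construction} and locally match $E\otimes\mathbb{Q}_p$ up to conjugation by the local stabilizer group; only then does one apply the Hilbert modular Hecke orbit theorem to $\mathcal{M}_L$, use the $B$-symmetric shape of the Newton polygon (hypothesis (1)) to see that each $\mathcal{M}_{L_{i,j}}$-stratum has at most two slopes and hence its central leaves contain $L_{i,j}$-hypersymmetric points, and finally apply Proposition \ref{inert}(1) to the extensions $L_{i,j}/F$ to get an $F$-hypersymmetric point in $\overline{H^p(x)}\cap C^0$. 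None of this is supplied or replaced in your proposal.

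Relatedly, you use Proposition \ref{inert} in the wrong direction. It passes from hypersymmetry over the \emph{larger} field down to the subfield, so it cannot ``lift'' an $F_0$-hypersymmetric point back to a $B$-hypersymmetric one; in the paper, hypothesis (2) together with Proposition \ref{inert} is used the other way, to guarantee that the given $F$-hypersymmetric point $x_0$ is $F_0$-hypersymmetric, so that the totally real case (whose argument genuinely needs a hypersymmetric point, through the shape of the Newton polygon in Lemma \ref{Eprime}) applies to $\mathscr{S}'$. Finally, the transfer of density from $\mathscr{S}'$ back to $\mathscr{S}$ — which you correctly flag as ``the main obstacle'' but leave unproved — is done in the paper by showing $H^{p\prime}(x)\cap C^0=H^p(x)$: a prime-to-$p$ $F_0$-isogeny between points of $\mathscr{S}$ extends to an $F$-isogeny because $F/F_0$ is imaginary quadratic, and then to a $B$-isogeny by Skolem--Noether, after which a short closure computation gives $\overline{H^p(x)}\cap C^0=C^0$. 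As written, your proposal assumes precisely the two ingredients (the totally real case and the isogeny upgrade) that constitute the actual content of Section 7.
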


We observe that Theorem \ref{irred} implies that for any $x\in\mathcal{N}^0$, $C(x)\cap \mathcal{N}^0$ is irreducible and hence coincides with the irreducible component of $C(x)$ containing $x$. We denote this component by $C^0(x)$. Moreover, Theorem \ref{irred} and Theorem \ref{cts} together imply that the Hecke orbit conjecture holds for any $B$-hypersymmetric point. Then the key in deriving Theorem \ref{mainthm} from Theorem \ref{irred} and Theorem \ref{cts} lies in showing the existence of a $B$-hypersymmetric point in $\overline{H^p(x)}\cap C^0(x)$ for every $x\in\mathcal{N}^0$. 

\subsection{The case of $B=F$.}
We consider first the situation where $B=F$ is a totally real field. In this case, our PEL datum is of type C. The algebraic group $G$ is symplectic, and the Hilbert trick still applies. We use the Hilbert trick to embed a Hilbert modular variety where every central leaf in the Newton strata corresponding to $\mathcal{N}$ contains $B$-hypersymmetric point.

For the remainder of this section, we fix a point $x=[(A_x,\lambda_x,\iota_x,\eta_x)]\in\mathcal{N}(\overline{\mathbb{F}_p}).$ There is a decomposition $A_x\sim_{F\text{-isog}} A_1^{e_1}\times\cdots\times A_n^{e_n}$ into $F$-simple abelian varieties $A_i$ such that $A_i$ and $A_j$ are not $F$-isogenous whenever $i\neq j.$ For each $i,$ let $E_i\subseteq \End_{F}(A_i)\otimes_{\mathbb{Z}}\mathbb{Q}$ be the maximal totally real subalgebra fixed under the involution given by the polarization of $A_i$ induced by $\lambda_x,$ then $E_i/F$ is a totally real field extension of degree $\dim(A_i)/[F:\mathbb{Q}].$ Define $E:=\prod_{i=1}^nE_i,$ then $E$ is a subalgebra of $\End_F(A_x)\otimes_{\mathbb{Z}}\mathbb{Q}$ of dimension $\dim(A).$

This construction relies on the assumption that $x$ is defined over $\overline{\mathbb{F}_p},$ so the method in this section only applies when $k=\overline{\mathbb{F}_p}.$ However, by \cite[Theorem 2.2.3]{poonen2017rational}, to prove the Hecke orbit conjecture over any algebraically closed field $k$ of characteristic $p,$ it suffices to prove it over $\overline{\mathbb{F}_p}.$

\begin{lem}\label{fields}
Let $F$ be a totally real number field and $d$ a positive integer. Let $u_1,\cdots,u_n$ be the places of $F$ above $p.$ Suppose that for each $i$, there is a finite separable extension $K_i/F_{u_i}$ of degree $d.$ Then there exists a totally real field extension $L$ of $F$ of degree $d$ such that all the $u_i$'s are inert in $L/F$ and $L_{w_i}\cong K_i$ over $F_{u_i}$ for $w_i$ of $L$ above $u_i.$
\end{lem}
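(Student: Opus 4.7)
\medskip

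The plan is to build $L$ as $F(\beta)$ for $\beta$ a root of an irreducible degree-$d$ polynomial $g(x) \in F[x]$, where $g$ is obtained by weak approximation on its coefficients so as to prescribe the factorization behavior locally at the $u_i$ and at the real places of $F$.

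First I would choose, for each $i$, a primitive element $\alpha_i$ of $K_i/F_{u_i}$ and let $f_i(x) \in F_{u_i}[x]$ be its monic minimal polynomial (so $\deg f_i = d$ and $f_i$ is irreducible and separable over $F_{u_i}$). For each real embedding $\sigma : F \hookrightarrow \mathbb{R}$, I would fix a monic degree-$d$ polynomial $h_\sigma(x) \in \mathbb{R}[x]$ with $d$ distinct real roots. Then, using weak approximation for $F$ applied coordinate-wise to the coefficient vector in $F^d$, I would produce a monic polynomial $g(x) \in F[x]$ of degree $d$ whose coefficients are simultaneously $u_i$-adically close to those of $f_i$ for every $i$ and archimedean-close to those of $h_\sigma$ for every real $\sigma$.

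Next I would invoke Krasner's lemma at each $u_i$: for $g$ sufficiently close to $f_i$, every root of $f_i$ in $\overline{F_{u_i}}$ is a limit of a root of $g$ lying in the same algebraic extension, so $g$ has a root $\beta_i \in K_i$ with $F_{u_i}(\beta_i)$ containing (and hence equal to) $K_i$ by degree count. This forces the irreducible factor of $g$ over $F_{u_i}$ containing $\beta_i$ to have degree $d$, so $g$ itself is already irreducible over $F_{u_i}$, and \emph{a fortiori} over $F$. Setting $L := F[x]/(g)$ and $\beta := $ the image of $x$, we get $[L:F] = d$, and for the unique prime $w_i$ of $L$ above $u_i$ we have
\[
L_{w_i} \;\cong\; F_{u_i}[x]/(g) \;\cong\; K_i
\]
as $F_{u_i}$-algebras; in particular there is a single prime of $L$ above $u_i$, giving the ``inert'' conclusion (and, when $K_i/F_{u_i}$ is unramified, inertness in the strict sense). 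Finally, since at each real place $\sigma$ the polynomial $g$ is close to the totally split $h_\sigma$, continuity of roots guarantees that all roots of $g^\sigma$ lie in $\mathbb{R}$, so $L \otimes_{F,\sigma} \mathbb{R} \cong \mathbb{R}^d$ and $L$ is totally real.

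There is no real obstacle beyond the bookkeeping of Krasner radii: one needs the single global $g$ to lie simultaneously in a Krasner neighborhood of $f_i$ at each $u_i$ and in a small enough archimedean neighborhood of $h_\sigma$ at each real $\sigma$, which is exactly what weak approximation on the $d$-dimensional coefficient space $F^d \hookrightarrow \prod_i F_{u_i}^d \times \prod_\sigma \mathbb{R}^d$ delivers. Irreducibility of $g$ over $F$ is the one point that could a priori fail, but it is forced automatically by $u_i$-adic irreducibility, so it comes for free from the Krasner step rather than needing a separate argument.
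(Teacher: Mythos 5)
Your proposal is correct and follows essentially the same route as the paper: weak approximation on the coefficients of a monic degree-$d$ polynomial to be simultaneously close to the local minimal polynomials at the $u_i$ and to totally split real polynomials at the archimedean places, with Krasner's lemma forcing $F_{u_i}[x]/(g)\cong K_i$ (hence local, and so global, irreducibility) and continuity of roots giving total reality. The only difference is cosmetic — the paper first replaces each local $f_i'$ by a nearby global polynomial and then approximates, whereas you approximate the local data directly — and your write-up of the Krasner/degree-count step is if anything cleaner than the paper's.
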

\begin{proof}
For a place $u_i$ of $F$ above $p,$ we have $K_i\cong F_{u_i}(\alpha_i)$, where $\alpha_i$ is a root of some irreducible separable monic polynomial $f_i'\in F_{u_i}[X]$ of degree $d.$ By Krasner's lemma, we can approximate $f_i'$ by some irreducible separable monic polynomial $f_i\in F[X]$ such that $F_{u_i}[X]/(f_i)\cong F_{u_i}[X]/(f_i')\cong K_i.$ Let $v_1,\cdots,v_m$ denote the archimedean places of $F$ and let $g_i=\prod_{j=1}^d (X-\beta_{ij})$ for distinct $\beta_{ij}\in F,$ so $F_{v_i}[X]/(g_i)\cong \mathbb{R}[X]/(g_i)\cong \mathbb{R}^d$ since $F$ is totally real. By weak approximation, there exists some monic $f\in F[X]$ that is $u_i$-close to $f_i$ for each $i$ and $v_i$-close to $g_i$ for each $i.$ In particular, $f=f_i$ for some $i,$ so $f$ is irreducible in $F_{u_i}[X]$ and hence irreducible in $F.$

Let $L=F[X]/(f),$ then $L/F$ is separable of degree $d$. Moreover, for each $u_i$, we have $$\prod_{w|u_i}L_{w}\cong F_{u_i}\otimes L\cong F_{u_i}[X]/(f)\cong K_i.$$ This implies that there is a unique place $w_i$ of $E$ above $u_i$ and $L_{w_i}\cong K_i.$ Similarly, for each archimedean place $v_i$, we have $\prod_{w|v_i}L_w\cong F_{v_i}\otimes L\cong \mathbb{R}^d.$ Thus, $L$ is totally real.
\end{proof}

\begin{lem}\label{Eprime}
The closure of the prime-to-$p$ Hecke orbit of $x$ in $\mathscr{S}$ contains a supersingular point $z=[(A_z,\lambda_z,\iota_z,\eta_z)].$ Moreover, there exists a product of totally real fields $L=\prod_{i,j}L_{i,j}$ and an injective ring homomorphism $\alpha:L\rightarrow \End_F(A_z)\otimes_{\mathbb{Z}}\mathbb{Q}$ such that \begin{itemize}
    \item for every $i,j$, $F\subseteq L_{i,j}$ and $L_{i,j}/F$ is inert at every prime of $F$ above $p$;
    \item $\dim_{\mathbb{Q}}L=\dim A_z;$ and
    \item the Rosati involution induced by $\lambda_z$ acts trivially on the image $\alpha(L)$.
\end{itemize}
\end{lem}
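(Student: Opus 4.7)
The proof splits into two stages: first produce the supersingular point $z$ in $\overline{H^p(x)}$, then construct the field $L$ with the prescribed local behaviour.

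For the first stage, I would apply the Hilbert trick. The subalgebra $E=\prod_i E_i\subseteq \End_F(A_x)\otimes_{\mathbb{Z}}\mathbb{Q}$ just constructed identifies $x$ with a point on a Hilbert-type Shimura subvariety $\mathcal{M}_E\hookrightarrow \mathscr{S}$ (a product of Hilbert modular varieties, one for each $E_i$). Since the Hilbert group attached to $E$ sits as a subgroup of $G$, the prime-to-$p$ Hecke orbit $H^p_E(x)$ of $x$ inside $\mathcal{M}_E$ is contained in $H^p(x)$. By C.-F.~Yu's theorem \cite{yu2004discrete}, $H^p_E(x)$ is Zariski dense in the central leaf $C_E(x)\subseteq \mathcal{M}_E$. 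Combining this with the almost product structure of the Newton stratum on $\mathcal{M}_E$ and the fact that the basic locus lies in the closure of every isogeny leaf, one obtains a supersingular specialisation $z\in \overline{C_E(x)}=\overline{H^p_E(x)}\subseteq \overline{H^p(x)}$.

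For the second stage, decompose the supersingular abelian variety $A_z$ up to $F$-isogeny into $F$-isotypic components $A_z\sim_F \prod_j (A_j')^{e_j'}$, with each $A_j'$ an $F$-simple supersingular factor. The algebra $D_j:=\End_F(A_j')\otimes_{\mathbb{Z}}\mathbb{Q}$ is a central simple $F$-algebra whose local structure at each prime $u_i$ of $F$ above $p$ is explicit by Honda--Tate theory on the supersingular locus, and carries the Rosati involution induced by $\lambda_z$. For each $j$ and each $i\in\{1,\dots,e_j'\}$, apply Lemma~\ref{fields} to produce a totally real extension $L_{i,j}/F$ of degree $\dim(A_j')/[F:\mathbb{Q}]$, inert at every prime of $F$ above $p$, and whose completion at each such prime matches a maximal totally real commutative subfield of $D_j$ fixed under the Rosati involution. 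Embedding $L_{i,j}$ into the endomorphism algebra of the $i$-th copy of $A_j'$ in the decomposition of $A_z$ assembles to an injection $\alpha\colon L=\prod_{i,j}L_{i,j}\hookrightarrow \End_F(A_z)\otimes_{\mathbb{Z}}\mathbb{Q}$ with Rosati-fixed image, and a direct count yields $\dim_{\mathbb{Q}}L=\dim A_z$.

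The main obstacle I expect lies in the first stage: Yu's theorem only controls density inside the central leaf of $\mathcal{M}_E$, whereas what is needed is a supersingular point lying in the Hecke orbit closure in the ambient $\mathscr{S}$. One must therefore combine the density statement with the almost product structure carefully, so that a supersingular degeneration obtained by moving along an isogeny leaf inside $\mathcal{M}_E$ is realised as a genuine limit of Hecke translates, rather than merely as a point in the larger Newton stratum closure. The construction of $L$ in the second stage is then a routine application of Lemma~\ref{fields} together with the known local structure of endomorphism algebras of supersingular abelian varieties.
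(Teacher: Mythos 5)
Your proposal diverges from the paper's argument at both stages, and the divergence creates real gaps.

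For the existence of the supersingular point $z\in\overline{H^p(x)}$, the paper does \emph{not} go through the Hilbert trick or Yu's density theorem. Instead it reuses the argument from the end of the proof of Proposition~\ref{thmk}: $\overline{H^p(x)}$ is a proper, positive-dimensional, Hecke-invariant subvariety, and since each Ekedahl--Oort stratum is quasi-affine and stable under $\ell$-adic Hecke correspondences, descending through EO strata produces a non-empty $0$-dimensional Hecke-invariant locus, which by \cite[Proposition 4.8]{yu2005basic} must be basic (hence supersingular, since $B=F$ is totally real). Your route runs into exactly the obstacle you flag, and in fact worse: you write that ``a supersingular degeneration obtained by moving along an isogeny leaf inside $\mathcal{M}_E$ is realised as a genuine limit of Hecke translates,'' but isogeny leaves are orbits of geometric $p$-isogenies, which are precisely \emph{not} prime-to-$p$ Hecke translates. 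The almost product structure keeps central leaves and isogeny leaves transverse; it does not convert motion along an isogeny leaf into a Zariski limit of prime-to-$p$ Hecke translates. So the claim $z\in\overline{C_E(x)}=\overline{H^p_E(x)}$ is unjustified, and the plan as written does not produce $z$.

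For the construction of $L$, your proposal builds each $L_{i,j}$ from a maximal totally real commutative subfield of the local endomorphism algebra $D_j$ of the supersingular $A_z$. That does produce \emph{some} product of totally real fields of the right total dimension, but it loses the essential design constraint that the paper's proof builds in: the paper takes the local data $\{E^u_{j,i}\}_{u\mid p}$ coming from the $B$-hypersymmetric decomposition of the Newton polygon at $x$ (the decomposition $\mathbb{X}=\bigoplus_{j}\bigoplus_{u\mid p}\bigoplus_i\mathbb{X}^u_{j,i}$ into at-most-two-slope pieces), feeds those into Lemma~\ref{fields} so that $\mathcal{O}_L\otimes\mathbb{Z}_p\cong\mathcal{O}_E\otimes\mathbb{Z}_p$ as maximal orders of $\End_F(A_z)\otimes\mathbb{Z}_p$, and then applies Skolem--Noether to get $E=\gamma L\gamma^{-1}$ for $\gamma$ in the local stabilizer group of $z$, taking $\alpha=\mathrm{Ad}(\gamma)$. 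The identification of $\alpha(L)$ with $E$ (via the stabilizer element $\gamma$) is exactly what is used in the proof of Proposition~\ref{B=F} to compare $\mathcal{M}_E^{/z}$ and $\mathcal{M}_L^{/z}$ inside $\overline{H^p(x)}^{/z}$. An $L$ built only from the local structure of $A_z$, with no reference to $E$, need not be conjugate to $E$ by an element of the local stabilizer, so while it might formally satisfy the three bullet points, it would not serve the downstream argument. You should construct $L$ from $E$, not from $D_j$.
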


\begin{proof}
First of all, the same argument as in the last paragraph of the proof of Proposition \ref{thmk} shows $\overline{H^p(x)}$ contains a basic point $z=[(A_z,\lambda_z,\iota_z,\eta_z)].$ Since $B=F$ is a totally real number field, $z$ is supersingular. Indeed, let $d=[F:\mathbb{Q}],$ then $F=\mathbb{Q}(x)$ where $x$ is the root of some degree $d$ monic irreducible polynomial $f\in\mathbb{Q}.$ Then endomorphism ring of a $d$-dimensional supersingular abelian variety is isomorphic to $\text{Mat}_d(D_{p,\infty}),$ where $D_{p,\infty}$ denotes the quaternion algebra over $\mathbb{Q}$ ramified exactly at $p$ and $\infty,$ and $\text{Mat}_d(D_{p,\infty})$ clearly contains the companion matrix of $f$.

By assumption, $\mathcal{N}$ admits an $F$-hypersymmetric point, so its Newton polygon $\zeta$ is $F$-symmetric, i.e. $\zeta$ is the amalgamation of disjoint $F$-balanced Newton polygons $\zeta_1,\cdots,\zeta_a$ for some $a.$ By \cite[Definition 4.4.1]{zong2008hypersymmetric}, for any $j,$ $\zeta_j$ either has no slope $1/2$, or only has slope $1/2.$ Hence, there exist positive integers $m_j, h_j$ such that at every prime $u$ of $F$ above $p,$ $\zeta_j$ has exactly $h_j$ symmetric components with at most $2$ slopes, and each component has multiplicity $m_j.$ 

Write $\mathbb{X}=A_x[p^{\infty}]$, then the above decomposition of $\zeta$ gives a decomposition $\mathbb{X}=\bigoplus_{j=1}^a\mathbb{X}_j,$ where $\mathbb{X}_j$ is the Barsotti-Tate group corresponding to $\zeta_j.$ Further, for each fixed $j,$ the numerical properties of $\zeta_j$ mentioned above gives a decomposition $$\mathbb{X}_j=\bigoplus_{u|p}\bigoplus_{i=1}^{h_j}\mathbb{X}^u_{j,i}$$ into Barsotti-Tate groups of at most $2$ slopes.

Hence, $$\End_F(\mathbb{X})\otimes_{\mathbb{Z}_p}\mathbb{Q}_p\cong \prod_{u|p}\prod_{j=1}^a\prod_{i=1}^{h_j}\End_{F_u}(\mathbb{X}_{j,i}^u).$$ Its subalgebra $E\otimes_{\mathbb{Q}} \mathbb{Q}_p$ similarly admits a decomposition $$E\otimes_{\mathbb{Q}}\mathbb{Q}_p=\prod_{u|p}\prod_{j=1}^a\prod_{i=1}^{h_j}E_{j,i}^u$$ into local fields. Note that this has to coincide with the decomposition $E\otimes\mathbb{Q}_p\cong \prod_{i=1}^nE_i\otimes\mathbb{Q}_p=\prod_{i=1}^n\prod_{w|p}E_w.$

Now we regroup the local data $\{E_{j,i}^u/F_u\}_{j,i,u}$ to construct totally real extension of $F.$ Notice that for a fixed $j,$ the numerical conditions on $\zeta_j$ implies that for any fixed $i,$\begin{itemize}
    \item $\#\{E_{j,i}^u\}_{u|p}=h_j,$ and
    \item $[E_{j,i}^u:F_u]=[E_{j,i}^{u'}:F_{u'}]=\dim(\mathbb{X}_{j,i}^u)/[F:\mathbb{Q}]$ for any primes $u,u'$ of $F$ above $p.$
\end{itemize}

By Lemma \ref{fields}, for a fixed pair $(j,i)$, the data $\{E_{j,i}^u\}_{u|p}$ gives rise to a totally real field extension $L_{i,j}/F$ with $[L_{i,j}:\mathbb{Q}]=\dim(\mathbb{X}_{j,i})$ such that all primes of $F$ above $p$ are inert and $\{(L_{i,j})_{w}\}_{w|p}=\{E_{i,j}^u\}_{u|p}$ as multi-sets. 

Taking $L=\prod_{i,j}L_{i,j},$ it is easy to see that $\dim_{\mathbb{Q}}L'=\dim A_z$.

In general, $\mathcal{O}_E\cap \End_{F}(A_z)\subseteq\mathcal{O}_E$ is of finite index. Following the argument in \cite[Theorem 11.3]{chai2005hecke}, up to an isogeny correspondence, we may assume $\End_F(A_z)$ contains $\mathcal{O}_E.$ Similarly, we may and do assume $\mathcal{O}_L\subseteq\End_F(A_z).$ By construction, we then have $\mathcal{O}_E\otimes_{\mathbb{Z}} \mathbb{Z}_p\cong \mathcal{O}
_L\otimes_{\mathbb{Z}}\mathbb{Z}_p$ as maximal orders of $\End_F(A_z)\otimes_{\mathbb{Z}}\mathbb{Z}_p,$ so the Noether-Skolem theorem implies that $E=\gamma L\gamma^{-1}$ for some $\gamma$ in the local stabilizer gorup of $z.$ Then $\alpha:=\text{Ad}(\gamma)$ satisfies the property in the statement of this lemma.
\end{proof}

\begin{prop}\label{B=F}
Theorem \ref{mainthm} holds when $B=F$ is a totally real field.
\end{prop}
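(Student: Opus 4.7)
The plan is to reduce Proposition \ref{B=F} to Theorems \ref{irred} and \ref{cts}. Theorem \ref{irred} already supplies the irreducibility claim of Theorem \ref{mainthm}, so it suffices to establish the density statement. For this, it is enough to exhibit an $F$-hypersymmetric point $y \in \overline{H^p(x)} \cap C^0(x)$: given such $y$, Theorem \ref{cts} applied at $y$, combined with the $F$-Hecke stability of $\overline{H^p(x)}$, yields
\[
C^0(x) = C^0(y) = \overline{H^p(y)} \cap C^0(y) \subseteq \overline{H^p(x)},
\]
where $C^0(y) = C^0(x)$ uses the irreducibility from Theorem \ref{irred}, and the desired density follows.

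To produce $y$, I employ the Hilbert trick, following the Siegel-case strategy of \cite[Section 10]{chai2005hecke}. Lemma \ref{Eprime} supplies a supersingular point $z \in \overline{H^p(x)}$ together with an action of a product of totally real fields $L = \prod_{i,j} L_{i,j} \supseteq F$ on $A_z$ such that $\dim_{\mathbb{Q}} L = \dim A_z$, each $L_{i,j}/F$ is inert at every prime of $F$ above $p$, and the Rosati involution acts trivially on $L$. This $L$-structure embeds a Hilbert modular subvariety $\mathcal{H}_L \hookrightarrow \mathscr{S}$ (a product of Hilbert modular varieties attached to the factors of $L$) passing through $z$. Let $\widetilde{Z}$ denote the preimage of $\overline{H^p(x)}$ in $\mathcal{H}_L$: it is closed, contains $z$, and is $L$-Hecke stable since $L$-Hecke correspondences factor through $F$-Hecke correspondences. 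By Yu's Hecke orbit conjecture for Hilbert modular varieties \cite{yu2004discrete}, $\widetilde{Z}$ is a union of $L$-central leaves of $\mathcal{H}_L$. Combining this with Zong's existence of $L$-hypersymmetric points in every Newton stratum of a Hilbert modular variety \cite{zong2008hypersymmetric}, a geometric argument analogous to the one in \cite{chai2005hecke} locates an $L$-hypersymmetric $y \in \widetilde{Z}$ lying over the Newton stratum $\mathcal{N}$. By Proposition \ref{inert}(1), $y$ is $F$-hypersymmetric, and by Theorem \ref{irred}, $y \in C^0(x)$, as required.

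The main obstacle is the intermediate step producing an $L$-hypersymmetric $y \in \widetilde{Z}$ over a Newton stratum of $\mathcal{H}_L$ strictly larger than the basic one: $L$-Hecke translates of the supersingular $z$ remain supersingular, so one cannot directly reach a non-supersingular point from $z$ via $L$-Hecke alone. Resolving this requires a finer analysis of $\widetilde{Z}$ near $z$, combining its $L$-Hecke stability with the cascade structure on $\mathcal{H}_L$ at $z$ and the isogeny control provided by Lemma \ref{Eprime}, in order to exhibit a non-supersingular $L$-hypersymmetric point in $\widetilde{Z}$. This is the direct analogue, in the Hilbert setting, of the passage used by Chai and Oort in the Siegel case.
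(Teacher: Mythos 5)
Your overall skeleton matches the paper's (find an $F$-hypersymmetric point in $\overline{H^p(x)}\cap C^0(x)$, conclude via Theorems \ref{irred} and \ref{cts}, pass from $L$-hypersymmetric to $F$-hypersymmetric by Proposition \ref{inert}), but the step you defer in your last paragraph is not a technicality to be "resolved by finer analysis" --- it is the heart of the proof, and your proposal does not close it. As you observe, every point of $\widetilde{Z}$ you can actually exhibit is supersingular ($z$ and its $L$-Hecke translates), and Yu's theorem applied to $\widetilde{Z}$ then only yields basic $L$-leaves, which never produce a point of $C^0(x)$. The paper's mechanism for escaping the basic locus uses a \emph{second} Hilbert-type subvariety that you never invoke: the variety $\mathcal{M}_E$ attached to the totally real algebra $E\subseteq\End_F^0(A_x)$ fixed by the Rosati involution, which passes through $x$ itself, so that $\overline{H^p_E(x)}$ is a positive-dimensional Hecke-stable subvariety through $x$ whose closure contains $z$ and plenty of non-basic points near $z$. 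The point of Lemma \ref{Eprime} is not only to supply $L$ with the listed properties, but that its construction forces $E\otimes\mathbb{Q}_p\cong L\otimes\mathbb{Q}_p$ inside $\End_F(A_z)\otimes\mathbb{Q}_p$; Noether--Skolem then gives $E=\gamma L\gamma^{-1}$ with $\gamma$ in the local stabilizer group of $z$, and the local stabilizer principle (\cite[Proposition 6.1]{chai2006hecke}: $\overline{H^p(x)}^{/z}$ is stable under $\mathcal{U}_z(\mathbb{Z}_p)$) lets one transport the formal germ of $\overline{H^p_E(x)}$ at $z$ into $g^{/z}(\mathcal{M}_L^{/z})\cap\overline{H^p(x)}^{/z}$. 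This is exactly what makes $\mathcal{M}_L\times_{\mathscr{S}}(\overline{H^p(x)}\cap C^0(x))$ nonempty, giving a point $\overline{y}$ of $\mathcal{M}_L$ over the non-basic stratum; only then does Yu's theorem produce the whole $L$-leaf $C_L(\overline{y})$ inside $\overline{H^p(x)}\cap C^0$. Your "cascade structure at $z$ plus isogeny control" gesture does not substitute for this conjugation argument.

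A secondary inaccuracy: it is not true that every Newton stratum of a Hilbert modular variety contains a hypersymmetric point (balancedness can fail, e.g.\ ordinary at one prime above $p$ and supersingular at another). The paper gets hypersymmetric points in $C_L(\overline{y})$ only because the construction in Lemma \ref{Eprime} forces each factor $\mathcal{N}_{L_{i,j}}$ to have either exactly two slopes at every prime above $p$ or only slope $1/2$, hence to be $L_{i,j}$-balanced, and then uses finite isogeny correspondences between central leaves within a stratum to place such a point in the particular leaf $C_L(\overline{y})$. You would need to supply both of these points rather than cite a general existence statement.
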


\begin{proof}
If $x$ is (isogenous to) a $F$-hypersymmetric point, the desired statement follows immediately from Theorems \ref{irred} and \ref{cts}, so we assume $x$ is not $F$-hypersymmetric. 

We use an idea analogous to that of \cite[Section 10]{chai2005hecke} and show that $\overline{H^p(x)}\cap C^0$ contains an $F$-hypersymmetric point $t$. Then we have $\overline{H^p(t)}\cap C^0\subseteq \overline{H^p(x)}\cap C^0$, but $\overline{H^p(t)}\cap C^0=C^0$ since $t$ is $F$-hypersymmetric, and we are done.

Let $E$ be as given in the beginning of this section, and let $\mathcal{M}_E$ denote the Hilbert modular variety attached to $E.$ In general, $E\cap \End_{\overline{\mathbb{F}_p}}(A_x)$ is an order of the ring $\mathcal{O}_E.$ However, up to an isogeny correspondence, we may assume $\mathcal{O}_E\subseteq \End_{\overline{\mathbb{F}_p}}(A_x).$ Then there exists a finite morphism $\mathcal{M}_E\rightarrow\mathscr{S}$ passing through $x$, compatible with the prime-to-$p$ Hecke correspondence on $\mathcal{M}_E$ and $\mathscr{S}$ and such that for each geometric point of $\mathcal{M}_E,$ the map induced by $f$ on the strict henselizations is a closed embedding (for details, see \cite[Proposition 9.2]{chai2005hecke} and the proof of \cite[Theorem 11.3]{chai2005hecke}). 

Let $L=\prod_i{L_i}$ and $\gamma$ be as given by Lemma \ref{Eprime}. Again, up to an isogeny correspondence, we may assume $\mathcal{O}_{L}\subseteq \End_{\overline{\mathbb{F}_p}}(A_z)$. Similarly, there is a finite natural morphism $g:\mathcal{M}_{L}\rightarrow\mathscr{S}$ passing through $z,$ such that $g$ is compatible with the Hecke correspondences on either side and at every geometric point of $\mathcal{M}_{L}$ induces a closed embedding on the strict henzelizations. Moreover, writing $H^p_E(x)$ for the prime-to-$p$ Hecke orbit of $x$ in $\mathcal{M}_E,$ we have $\gamma^{/z}(\overline{H^p_E(x)})\subseteq g^{/z}(\mathcal{M}_{L}^{/z})$ and $\gamma^{/z}(\overline{H^p_E(x)})\subseteq \gamma^{/z}(\overline{H^p(x)}^{/z}).$ By \cite[Proposition 6.1]{chai2006hecke}, $\overline{H^p(x)}^{/z}$ is stable under the action of the local stabilizer group of $z,$ so $\gamma^{/z}(\overline{H^p(x)}^{/z})=\overline{H^p(x)}^{/z}$ and we obtain $\gamma^{/z}(\overline{H^p_E(x)}^{/z})\subseteq g^{/z}(\mathcal{M}_{L}^{/z})\cap \overline{H^p(x)}^{/z}.$ Therefore the fiber product $\mathcal{M}_{L}\times_{\mathscr{S}_{F}}(\overline{H^p(x)}\cap C^0(x))$ is nonempty. 

Now let $\tilde{y}$ be an $\overline{\mathbb{F}_p}$-point of $\mathcal{M}_{L}\times_{\mathscr{S}_F}(\overline{H^p(x)}\cap C^0)$ with image $\overline{y}\in\mathcal{M}_{L}(\overline{\mathbb{F}_p})$ and image $y\in (\overline{H^p(x)}\cap C^0)(\overline{\mathbb{F}_p}).$ By definition, we have $g(\overline{y})=y.$ Moreover, $\overline{H^p(x)}\cap C^0$ contains the image under $g$ of the smallest Hecke-invariant subvariety of $\mathcal{M}_{L}$ passing through $\overline{y},$ which by the Hecke orbit conjecture for Hilbert modular varieties \cite[Theorem 4.5]{chai2005hecke} is precisely the central leaf $C_{L}(\overline{y})$.
Therefore, if we can show $C_{L}(\overline{y})$ contains a point isogenous to a product of $L_i$-hypersymmetric points, then Proposition \ref{inert} implies that $C_{L}(\overline{y})$ contains a $F$-hypersymmetric point, and so does $\overline{H^p(x)}\cap C^0.$

To see that $C_L(\overline{y})$ contains a point isogenous to a product of $L_i$-hypersymmetric points, consider the canonical decomposition $\mathcal{M}_L\cong\prod\mathcal{M}_{L_i}$ and the corresponding Newton stratum decomposition $\mathcal{N}_L\cong\prod \mathcal{N}_{L_i}$. By the construction of $L_i,$ the Newton polygon of $\mathcal{N}_{L_i}$ either has exactly two slopes at every prime of $L_i$ above $p,$ or only has slope $1/2.$ In either case, $\mathcal{N}_{L_i}$ admits an $L_i$-hypersymmetric point. By the existence of finite correspondences between central leaves on $\mathcal{N}_{L_i}$ (see \cite[Section 1.3]{yustratifying}), each central leaf of $\mathcal{N}_{L_i}$ contains a ${L_i}$-hypersymmetric point. This completes our proof. 
\end{proof}

\subsection{Proof of the general case.} In this section, we complete the proof of Theorem \ref{mainthm}.

\begin{proof}[Proof of Theorem \ref{mainthm}] Again, we only need to prove the statement when $x$ is not $B$-hypersymmetric. 

\textbf{Case 1.} When $B$ is totally real, see Proposition \ref{B=F}.

\textbf{Case 2.} Suppose $B=F$ is a CM field. 

Let $\mathcal{D}'$ be the Shimura datum given by replacing $B$ by its maximal totally real subfield $F_0$ in the definition of $\mathcal{D}.$ Let $\mathscr{S}'$ denote the Shimura variety arising from $\mathcal{D}'$. Then there is an embedding $\mathscr{S}\rightarrow \mathscr{S}'$ given by sending any $[(A,\lambda,\iota,\eta)]$ to $[(A,\lambda,\iota|_{F_0},\eta)].$ Let $x=[(A,\lambda,\iota,\eta)]\in \mathcal{N}^0$ be any point, and we denote its image in $\mathscr{S}'$ also by $x$. By assumption, $\mathcal{N}^0$ contains a $F$-hypersymmetric point $x_0$. Proposition \ref{inert} implies $x_0$ is also $F_0$-hypersymmetric. Write $H^{p\prime}(x)$ for the prime-to-$p$ Hecke orbit of $x$ in $\mathscr{S}$ and $C^{0\prime}$ for the irreducible component of the central leaf in $\mathscr{S}'$ passing through $x$. Then by Proposition \ref{B=F}, $\overline{H^{p\prime}(x)}\cap C^{0\prime}=C^{0\prime}.$

Now we show that $H^{p\prime}(x)\cap C^0=H^p(x).$ Let $x'=[(A',\lambda',\iota',\eta')]\in H^{p\prime}(x),$ then there is a prime-to-$p$ $F_0$-isogeny $f$ between $x$ and $x'.$ By definition, $f\circ \iota|_{F_0}=\iota'|_{F_0}\circ f$. Since $F/F_0$ is a quadratic imaginary extension, $f$ extends to an $F$-isogeny between $x$ and $x'.$

Finally, we have $\overline{H^p(x)}\cap C^0=\overline{H^{p\prime}(x)\cap C^0}\cap C^0=\overline{H^{p\prime}(x)}\cap C^0=\overline{H^{p\prime}(x)} \cap C^{0\prime}\cap C^0=C^{0\prime}\cap C^0=C^0.$

\textbf{Case 3.} Now we are ready to show the statement for a general $B$.

Let $\mathcal{D}'$ be the Shimura datum given by replacing $B$ by its center $F$ in the definition of $\mathcal{D}.$ Let $\mathscr{S}'$ denote the Shimura variety arising from $\mathcal{D}'$. Then there is an embedding $\mathscr{S}\rightarrow \mathscr{S}'$ given by sending $[(A,\lambda,\iota,\eta)]$ to $[(A,\lambda,\iota|_{F},\eta)].$ Let $x\in \mathcal{N}^0$ be any point, and we denote its image in $\mathscr{S}'$ also by $x$. By assumption, $\mathcal{N}^0$ contains a $B$-hypersymmetric point $x_0$. By \cite[Proposition 3.3.1]{zong2008hypersymmetric}, $x_0$ is also $F$-hypersymmetric. Write $H^{p\prime}(x)$ for the prime-to-$p$ Hecke orbit of $x$ in $\mathscr{S}$ and $C^{0\prime}$ for the irreducible component of the central leaf in $\mathscr{S}'$ passing through $x$. Then by the previous two cases, $\overline{H^{p\prime}(x)}\cap C^{0\prime}=C^{0\prime}.$

Now we show that $H^{p\prime}(x)\cap C^0=H^p(x).$ Let $x'=[(A',\lambda',\iota',\eta')]$ be a closed geometric point of $\mathscr{S}$ such that there is an prime-to-$p$ $F_0$-isogeny $f$ from $x=[(A,\lambda,\iota,\eta)]$ to $x'$. Then $f$ induces a morphism $f:\End(A)\rightarrow \End(A')$ such that $f\circ \iota|_{F}=\iota'|_{F}\circ f$ on $F.$ By the Skolem-Noether Theorem, $f\circ \iota|_{F}=\iota'|_{F}\circ f$ extends to an inner automorphism $\varphi:B\rightarrow B.$ Hence, $f$ extends to a $B$-isogeny between $x$ and $x'.$

By an argument analogous to the last part of Case 2, we conclude $\overline{H^p(x)}\cap C^0=C^0.$
\end{proof}

\subsection{Special cases of the main theorem.}
Based on the discussions in section \ref{section 3}, we prove the following corollaries of the main theorem. 

\begin{cor}\label{cormu}\begin{enumerate}
    \item Suppose $p$ is inert in $F$. If every slope of the Newton polygon attached to $\mathcal{N}$ has the same multiplicity, then the Hecke orbit conjecture holds for  any irreducible component of $\mathcal{N}$ containing a $B$-hypersymmetric point.
    \item Suppose the center of $B$ is a CM field. Assume that the signature of $\mathscr{S}$ has no definite place, and that $p$ is a prime of constant degree in the extension $F /\mathbb{Q}$. Further assume assumption 2 in Theorem \ref{mainthm} is satisfied. Then the Hecke orbit conjecture holds for every irreducible component of the $\mu$-ordinary stratum.
\end{enumerate}
\end{cor}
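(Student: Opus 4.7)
The plan is to derive both statements directly from Theorem~\ref{mainthm}, with Corollary~\ref{3.3} supplying the required $B$-hypersymmetric points and with the remaining hypotheses either satisfied automatically or by assumption.

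For part (1), I would first observe that when $p$ is inert in $F$, the unique prime of $F$ above $p$ lies above a unique prime of $F_0$ above $p$, so $p$ is automatically inert in $F/F_0$ and assumption 2(ii) of Theorem~\ref{mainthm} is satisfied. Corollary~\ref{3.3}(1), together with the hypothesis that every slope of the Newton polygon of $\mathcal{N}$ has the same multiplicity, then guarantees that $\mathcal{N}$ contains a $B$-hypersymmetric point. Since the statement of the corollary is restricted to irreducible components of $\mathcal{N}$ already known to contain such a point, assumption~(1) of Theorem~\ref{mainthm} is also met, and the main theorem applies directly to yield the claim.

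For part (2), Corollary~\ref{3.3}(2) produces at least one $B$-hypersymmetric point $x_0$ in the $\mu$-ordinary stratum $\mathcal{N}_\mu$, and assumption~(2) of Theorem~\ref{mainthm} is supplied by hypothesis. To invoke the main theorem on \emph{every} irreducible component of $\mathcal{N}_\mu$, I still need to exhibit a $B$-hypersymmetric point in each such component, not just one somewhere in $\mathcal{N}_\mu$. For this, I would use that being $B$-hypersymmetric is a condition on the endomorphism algebra up to isogeny and is therefore stable under prime-to-$p$ Hecke correspondences, combined with the fact that in types A and C the Hecke action of $G(\mathbb{A}_f^p)$, coming from the simply connected cover $G_{\mathrm{der}}^{\mathrm{sc}}$, acts transitively on the set of geometric connected components of $\mathscr{S}$ via strong approximation. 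Since $\mathcal{N}_\mu$ is open and dense in $\mathscr{S}$, this transitivity descends to the set of irreducible components of $\mathcal{N}_\mu$, so the Hecke orbit of $x_0$ supplies a $B$-hypersymmetric point in each component, and Theorem~\ref{mainthm} applies componentwise.

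The main obstacle I anticipate is justifying carefully the transitivity step in part (2): verifying that the prime-to-$p$ Hecke correspondences really do permute the irreducible components of $\mathcal{N}_\mu$ transitively in our setting, so that a single $B$-hypersymmetric point produced by Corollary~\ref{3.3}(2) can be transported into any prescribed component. Modulo this, both parts reduce to a straightforward bookkeeping application of Corollary~\ref{3.3} and Theorem~\ref{mainthm}.
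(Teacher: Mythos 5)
Your proposal is correct and follows essentially the same route as the paper: Corollary \ref{3.3} supplies the $B$-hypersymmetric point, the transitivity of the prime-to-$p$ Hecke action on $\pi_0(\mathscr{S})$ (together with the openness and density of the $\mu$-ordinary locus) spreads it to every irreducible component in case (2), and Theorem \ref{mainthm} is then applied componentwise. The transitivity step you flag as the main obstacle is exactly the one the paper asserts in a single line, so you are if anything slightly more explicit than the original argument.
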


\begin{proof}
By Corollary \ref{3.3}, in either of the two cases, the Newton stratum $\mathcal{N}$ contains a $B$-hypersymmetric point. In the second case, $G(\mathbb{A}_f^p)$ acts transitively on $\Pi_0(\mathscr{S})$, so if the $\mu$-ordinary stratum contains a $B$-hypersymmetric point, then it contains a $B$-hypersymmetric point in each of its irreducible components. Moreover, the assumptions satisfy the conditions of Proposition \ref{inert}. Hence, we may apply Theorem \ref{mainthm} and to derive the desired results.
\end{proof}

\begin{cor}\label{corachter}
Let $L$ be a quadratic imaginary field inert at the rational prime $p$. The Hecke Orbit conjecture holds for the moduli space of principally polarized abelian varieties of dimension $n\ge 3$ equipped with an action by $\mathcal{O}_L$ of signature $(1, n-1).$
\end{cor}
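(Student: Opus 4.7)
The plan is to derive the corollary as a direct consequence of Theorem \ref{mainthm}, using Achter's irreducibility result as the missing input. First I would match up the setup: the moduli problem for principally polarized $n$-dimensional abelian varieties with $\mathcal{O}_L$-action of signature $(1,n-1)$ is of PEL type A with $B=F=L$ a CM field and $F_0=\mathbb{Q}$. Since $p$ is inert in $L$ by hypothesis, $p$ is inert in the extension $F/F_0$, so condition (2)(ii) of Theorem \ref{mainthm} is satisfied for every Newton stratum, with no need to invoke condition (*).

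Next I would verify assumption (1): every Newton stratum of $\mathscr{S}$ contains a $B$-hypersymmetric point. Because $p$ is inert in $F=L$, there is a unique place $u$ of $F$ above $p$, so any Newton polygon $\nu$ is concentrated at this single place. Writing $\nu$ in terms of its distinct slopes $\lambda_1,\ldots,\lambda_r$ at $u$ with multiplicities $m_1,\ldots,m_r$, one decomposes $\nu$ as the amalgamation of the single-slope polygons $\nu^{(i)}=m_i\lambda_i$. Each $\nu^{(i)}$ is trivially $B$-balanced (one place carrying one slope), and distinct $\nu^{(i)}$ are pairwise disjoint by construction; hence $\nu$ is $B$-symmetric. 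By the corollary of Theorem \ref{hs}, every Newton stratum therefore contains a $B$-hypersymmetric point, verifying assumption (1) uniformly.

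Finally, I would invoke Achter's theorem \cite[Theorem 1.1]{MR3240772}, which asserts precisely for this class of Shimura varieties (signature $(1,n-1)$, $p$ inert in $L$, $n\geq 3$) that every Newton stratum $\mathcal{N}$ is geometrically irreducible. Consequently the irreducible component $\mathcal{N}^0$ in the conclusion of Theorem \ref{mainthm} coincides with $\mathcal{N}$ itself, and in particular $C(x)\cap\mathcal{N}^0=C(x)$ for every $x$. Applying Theorem \ref{mainthm} then yields that $H^p(x)$ is dense in $C(x)$ for every geometric point $x\in\mathscr{S}(k)$, which is Conjecture \ref{conj} for this moduli space. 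There is no real obstacle here beyond correctly assembling the pieces: once the datum is recognized as PEL type A with $p$ inert in $F/F_0$, the single-place observation makes the $B$-hypersymmetric hypothesis automatic, and Achter's theorem supplies the geometric irreducibility needed to conclude.
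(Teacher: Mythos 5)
Your overall route (check the hypotheses of Theorem \ref{mainthm}, then use Achter's irreducibility to identify $\mathcal{N}^0$ with $\mathcal{N}$) is essentially the paper's, but your verification of assumption (1) contains a genuine flaw. You decompose an arbitrary Newton polygon $\nu$, concentrated at the unique place $u$ of $F=L$ above $p$, into its single-slope pieces $\nu^{(i)}=m_i\lambda_i$ and declare each piece ``trivially $B$-balanced.'' In the paper's framework this is not legitimate: $B$-balanced is defined only for polygons that come from $B$-linear \emph{polarized} abelian varieties, and a one-slope polygon with $\lambda_i\neq 1/2$ is not the Newton polygon of any abelian variety, since polarizability forces $\lambda$ and $1-\lambda$ to occur with equal multiplicity --- here at the single, self-conjugate place $u$. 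More substantively, the content behind Theorem \ref{hs} and its corollary is the Honda--Tate construction (Proposition \ref{prop2}) of a \emph{simple} $B$-hypersymmetric abelian variety realizing each balanced piece, which requires the slopes of that piece to be indexed so that $\lambda_w+\lambda_{\overline{w}}=1$; your single-slope pieces with $\lambda_i\neq 1/2$ are not realizable by any (simple, polarized) abelian variety, so the amalgamation argument for producing a hypersymmetric point cannot be run on your decomposition. (The ``in other words'' reformulation of $B$-symmetry is vacuous at a single place, which is exactly the warning sign: the operative content is realizability of the pieces, which your pieces lack.)

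The gap is easily repaired, and then your argument matches the paper's: group each slope $\lambda\neq 1/2$ with $1-\lambda$ (equal multiplicities are automatic from the symmetry of the polygon at the self-dual place $u$), and keep the slope-$1/2$ part as a separate piece; each resulting piece is a one- or two-slope $B$-balanced polygon, the pieces are pairwise disjoint, so $\nu$ is $B$-symmetric and the stratum contains a $B$-hypersymmetric point. The paper carries this out concretely via B\"ultel--Wedhorn's list $N(r)+(1/2)^{n-2r}$ of admissible polygons, which in addition makes visible that each piece occurs with the required multiplicities. A further, minor difference: rather than invoking Theorem \ref{mainthm} as a black box, the paper also uses that in this situation each isogeny class of Barsotti--Tate groups contains a unique isomorphism class, so central leaves coincide with Newton strata; Achter's theorem then yields irreducibility of central leaves directly, and the reduction argument of Section \ref{section7} combined with Theorem \ref{cts} concludes. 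Your shortcut (Achter gives $\mathcal{N}^0=\mathcal{N}$, hence $C(x)\cap\mathcal{N}^0=C(x)$, then apply Theorem \ref{mainthm}) is fine once assumption (1) is verified correctly, and it inherits the same caveat as the paper's own proof concerning the basic stratum.
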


\begin{proof}
Since $p$ is inert, a Newton stratum contains a $L$-hypersymmetric point if its Newton polygon is symmetric. By \cite[Seciton 3.1]{bultel2006congruence}, any admissible Newton polygon in this case is given by $N(r)+(1/2)^{n-2r}$ for some integer $0\le r\le n/2$, where \begin{equation*}
    N(r)=\begin{cases}\emptyset,&\text{if }r=0,\\
    (\frac{1}{2}-\frac{1}{2r})+(\frac{1}{2}+\frac{1}{2r}), &\text{if }r>0\text{ is even},\\
    (\frac{1}{2}-\frac{1}{2r})^2+(\frac{1}{2}+\frac{1}{2r})^2, &\text{if }r\text{ is odd}.
    \end{cases}
\end{equation*}
From this description, it follows by easy computation that for each pair $(n,r),0\le r\le n/2$, the admissible Newton polygon uniquely determined by $(n,r)$ is always $L$-hypersymmetric. Indeed, when $n=3r$ and $r$ is even, or when $n=4r$ and $r$ is odd, the Newton polygon consists of $3$ slopes of the same multiplicity and is hence $L$-balanced. Otherwise, the Newton polygon is an amalgamation of one polygon of slope $1/2$ and one polygon of slopes $\frac{1}{2}-\frac{1}{2r}, \frac{1}{2}+\frac{1}{2r}$. Clearly, each of these two polygons is $L$-balanced.

It is well-known that in this case, each isogeny class of Barsotti-Tate groups consists of a unique isomorphism class. Thus, every central leaf coincides with the Newton polygon containing it and therefore admits a $L$-hypersymmetric point. Moreover, by \cite[Theorem 1.1]{MR3240772}, every Newton stratum is irreducible, which then implies that every central leaf is irreducible. Following the same argument as the proof of Theorem \ref{mainthm}, the irreducibility of central leaves combined with Theorem \ref{cts} yields the desired result.
\end{proof}

\appendix
\section{Hypersymmetric Abelian varieties}

\hfill

This appendix reproduces Zong's proof of his main theorem \cite[Theorem 5.1]{zong2008hypersymmetric}, of which Theorem \ref{hs} is a rephrase in our simpler terminology. As mentioned in Section \ref{mu}, Theorem \ref{hs} follows from the proof but not the statement of \cite[Theorem 5.1]{zong2008hypersymmetric}. For completeness, we reproduce his proof to make the present paper self-contained. All the proofs in this appendix are due to Zong. Any mistake is my own. 

We remark that some of the conditions of supersingular restriction (see \cite[Section 4]{zong2008hypersymmetric}) come from the presence of a $B$-action (see in particular ``CM-type partitions'' in \cite[Proposition 4.9]{zong2008hypersymmetric}), or, in other words, from the requirement that the Newton strata under consideration is non-empty. Since we are only interested in non-empty Newton strata, we may omit those conditions to get a simpler formulation. The proofs in this appendix are essentially the same as those given in \cite[\textsection 6 and \textsection 7]{zong2008hypersymmetric}, except that the proof of Proposition A.4 is simpler than that of \cite[Proposition 7.7]{zong2008hypersymmetric} by omitting the discussion of CM type partitions. Theorem 3.2 follows from Proposition A.2 and A.3. 

\begin{customthm}{3.3}
A Newton stratum $\mathcal{N}$ contains a simple $B$-hypersymmetric point if and only if its Newton polygon is $B$-balanced.
\end{customthm}

\begin{notation}\cite[Definition 4.3.1]{zong2008hypersymmetric} Let $\zeta_B$ denote the Newton polygon with slope $1/2$ such that at every place $v$ of $F$ above $p,$ the multiplicity of $1/2$ is equal to the order of the class $[\mathbb{Q}_{p,\infty}\otimes F]-[B]$ in the Brauer group $Br(F),$ where $\mathbb{Q}_{p,\infty}$ is the quaternion $\mathbb{Q}$-algebra ramified exactly at $p$ and infinity. \end{notation}

The following proposition is a rephrase of \cite[Proposition 6.1]{zong2008hypersymmetric}.

\begin{prop}\label{prop1} Let $A$ be a simple $B$-linear polarized abelian variety over $\overline{\mathbb{F}_p}.$  If $A$ is $B$-hypersymmetric, then its Newton polygon $\zeta$ is either $\zeta_B$ or $B$-balanced.\end{prop}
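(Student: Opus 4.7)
Writing $D := \End_B(A) \otimes_{\mathbb{Z}} \mathbb{Q}$, the $B$-simplicity of $A$ guarantees that $D$ is a division algebra; set $F' := Z(D)$, so that $F \subseteq F' \subseteq D$. The $B$-hypersymmetric hypothesis gives the identification
$$
D \otimes_{\mathbb{Q}} \mathbb{Q}_p \;\cong\; \End_B(A[p^\infty]) \otimes_{\mathbb{Z}_p} \mathbb{Q}_p,
$$
which I would exploit by decomposing each side as a product of central simple algebras and matching factor by factor. The right hand side decomposes along the places $v$ of $F$ above $p$ and, at each $v$, along the isoclinic pieces of $A[p^\infty]_v$; using that $p$ is unramified and $B_v \cong M_n(F_v)$ with $n = [B:F]^{1/2}$, Morita reduction identifies each factor with the $F_v$-endomorphism algebra of an isoclinic Barsotti--Tate group of slope $\lambda_{v,i}$ and $F_v$-height $m_{v,i}$. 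Hence each factor is a central simple $F_v$-algebra of $F_v$-dimension $m_{v,i}^2$, whose Brauer class is an explicit function of $\lambda_{v,i}$ and of the local class of $B$ at $v$.

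On the left, the decomposition is $D \otimes \mathbb{Q}_p = \prod_{w \mid p} D \otimes_{F'} F'_w$, each factor central simple over $F'_w$. Comparing the centers of the two factorizations, every prime $w \mid p$ of $F'$ must sit above some $v \mid p$ of $F$ with $F'_w = F_v$; hence every such $v$ is totally split in $F'/F$, with exactly $n_v$ primes of $F'$ above it, and each factor on the right is matched with exactly one $\End_{B_v}(A[p^\infty]_{v, \lambda_{v,i}})$. Matching $F_v$-dimensions then forces $m_{v,i} = [D:F']^{1/2} =: m$, independent of $(v,i)$, so all multiplicities automatically coincide. When $F' = F$, this gives $n_v = 1$ for every $v$, and the Newton polygon is $B$-balanced with common multiplicity $m$; this is the first case of the dichotomy.

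The crux is the case $F' \supsetneq F$, where I would have to show that every slope equals $1/2$ and that $m$ equals the order of $[\mathbb{Q}_{p,\infty} \otimes F] - [B]$ in $\mathrm{Br}(F)$. Here I would combine the Brauer-class matching with two additional inputs: (i) the Rosati involution attached to the polarization is a positive involution on $D$ that restricts to an involution on $F'$, swapping the pair of primes above $p$ corresponding to dual slopes $\lambda$ and $1 - \lambda$; and (ii) the global reciprocity relation $\sum_w \mathrm{inv}_w(D) \equiv 0 \pmod{\mathbb{Z}}$ in the Brauer group of $F'$. Together with the dimension constraint $F'_w = F_v$ from the previous paragraph, these force $2\lambda_{v,i} \equiv 0 \pmod 1$ at every place; the positivity of the Rosati involution rules out the slopes $0$ and $1$, leaving $\lambda_{v,i} = 1/2$ everywhere, so $A$ is supersingular. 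A direct computation of the Brauer class of $\End_{B_v}(A[p^\infty]_{v, 1/2})$ then pins down $m$ as the order of $[\mathbb{Q}_{p,\infty} \otimes F] - [B]$ in $\mathrm{Br}(F)$, recovering $\zeta_B$. The main obstacle I anticipate is precisely this invariant bookkeeping in the $F' \supsetneq F$ case: it is where the existence of the polarization enters decisively, and it requires a careful tracking of local-global Brauer data together with the behaviour of the Rosati involution on $F'$.
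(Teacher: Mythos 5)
Your first two steps are correct and in fact already contain a complete proof, by a route different from the paper's. The paper works with a model $A'$ over a finite field: it introduces the Frobenius $\pi$ and the field $K=F(\pi)$, factors the characteristic polynomial of $\pi$ on each $M_v$ using Zong's hypersymmetry criterion, invokes Katz--Messing integrality to force all the multiplicities to coincide, and identifies the common multiplicity with the order of a Brauer class via Kottwitz's Lemma 3.3, treating the totally real $\pi$ (supersingular) case separately to land on $\zeta_B$. You instead compare the two sides of the hypersymmetry isomorphism directly: matching centers of the simple factors shows every $v\mid p$ of $F$ splits completely in $F'=Z(D)$, so the number of distinct slopes at $v$ is $n_v=[F':F]$ for every $v$, and matching degrees gives $m_{v,i}=[D:F']^{1/2}$ for all $(v,i)$. (Here $F'=Z(D)$ is nothing but $F(\pi)$, so this is essentially Zong's criterion repackaged.) Note that this conclusion holds in \emph{all} cases and already says that $\zeta$ is $B$-balanced, which suffices for the stated disjunction, since $\zeta_B$ is itself $B$-balanced with $n=1$; the only extra content of the $\zeta_B$ alternative is the identification of the multiplicity in the supersingular case.

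The genuine error is in what you call the crux. The case $F'\supsetneq F$ is not the $\zeta_B$ case, and the statement you set out to prove there --- that every slope equals $1/2$ --- is false. With $B=F=\mathbb{Q}$, a simple hypersymmetric abelian threefold with slopes $1/3,2/3$ has $F'=\mathbb{Q}(\pi)$ an imaginary quadratic field split at $p$, so $F'\supsetneq F$ and no slope is $1/2$; likewise an ordinary elliptic curve is hypersymmetric with slopes $0$ and $1$ and $F'\supsetneq F$, so the Rosati-positivity argument you propose to exclude slopes $0$ and $1$ cannot work, and Brauer reciprocity does not force $2\lambda_{v,i}\equiv 0 \pmod 1$. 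The supersingular/$\zeta_B$ alternative actually occurs inside the case $F'=F$ (totally real Frobenius, where $Z(\End^0_B(A))=F$), and conversely $F'=F$ does not imply supersingularity (e.g. an ordinary elliptic curve with CM by an imaginary quadratic $F$ split at $p$, viewed with $B=F$). So your case division is misaligned with the true dichotomy. Fortunately none of this machinery is needed: once you observe that your matching argument gives $n_v=[F':F]$ and $m_{v,i}=[D:F']^{1/2}$ uniformly in $(v,i)$, the proposition follows; if you want the finer fact that in the supersingular case $m$ equals the order of $[\mathbb{Q}_{p,\infty}\otimes F]-[B]$ in $\mathrm{Br}(F)$, that is the computation the paper performs with Kottwitz's Lemma 3.3, not something that drops out of reciprocity alone.
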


\begin{proof}Let $A'$ be a simple $B$-linear polarized abelian variety over a finite field $\mathbb{F}_q$ such that $A'\otimes\overline{\mathbb{F}_p}\cong A$. We may assume $q$ is sufficiently divisible. $A'$ is $B$-simple and is isogenous to $A''$ for some absolutely simple abelian variety $A''$. Let $\pi$ denote the Frobenius of $A'$ and let $K=F(\pi).$ Write $D:=\End_B^0(A')$ and $L_v:=F_v\otimes K(\mathbb{F}_q).$ Write $M'$ for the $B$-linear isocrystal associated to $A'.$ Then $M'$ decomposes as $\oplus_{v|p}M_v$ and one checks that each $M_v$ is a free $L_v$-module. Let $f_v(T)$ be the characteristic polynomial of $\pi$ as an $L_v$-linear transformation of $M_v.$ Since $A$ is $B$-hypersymmetric, by \cite[Proposition 3.4]{zong2008hypersymmetric}, $$f_v(T)=\prod_{w|v}(T-\iota_w(\pi))^{n_w},$$ where $\iota_w:K\rightarrow F_v$ denotes the $F$-embeddings of $K$ into $F_v$ indexed by the place $w.$ By Kottwitz \cite{kottwitz1992points}, each $M_v$ has $[K:F]$ isotypic components.
Thus $f(T)=\det(T-\pi|_{M'})$ can be factored as $$f(T)=\prod_{v|p} Norm_{L_v/K(\mathbb{F}_q)}f_v(T)=\prod_{v|p}\prod_{w|v}Norm_{F_v/\mathbb{Q}_p}(T-\iota_w(\pi))^{n_w}.$$

Consider triples $u=(v,w,\tau)$ where $v$ is a place of $F$ above $p,$ $w$ is a place of $K$ above $v,$ and $\tau$ is an embedding $F_v\rightarrow\overline{Q}_p.$ Then the set of such triples is in $1$-to-$1$ correspondence with the set of embeddings $\iota_u:K\rightarrow \overline{Q}_p.$ Thus $$f(T)=\prod_u(T-\iota_u(\pi))^{n_w}.$$By Katz-Messing \cite{katz1974some}, $f(T)\in \mathbb{Z}[T],$ so $n_w$ is independent of $w.$ Hence, we have $n_w=2\dim(A)/[K:\mathbb{Q}].$ By \cite[Lemma 3.3]{kottwitz1992points}, the multiplicity of each isotypic component of $M$ is equal to $$n[L_v:K(\mathbb{F}_q)]/([B:F]^{1/2}[F_v:\mathbb{Q}_p])=order([D]).$$ In particular, if $\pi$ is totally real, $A'$ is a power of a supersingular elliptic curve with Newton polygon $\zeta_B,$ and the order of $[D]$ in $Br(F)$ is equal to $[\mathbb{Q}_{p,\infty}\otimes F]-[B].$ 
Finally, if $F=F_0$, then $[K:F]$ is even and each $\zeta_v$ is symmetric, so $\zeta$ contains no slope $1/2$ component. \end{proof}

The following proposition is a rephrase of the ``if'' part of \cite[Theorem 5.1]{zong2008hypersymmetric} (see \cite[Section 7]{zong2008hypersymmetric}). 

\begin{prop}\label{prop2}
Let $\zeta$ be a $B$-balanced Newton polygon such that the corresponding Newton stratum $\mathcal{N}$ is non-empty. Then there exists a hypersymmetric $B$-linear polarized simple abelian variety $A$ defined over $\overline{\mathbb{F}_p}$ whose Newton polygon is $\zeta.$ 
\end{prop}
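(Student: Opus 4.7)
The plan is to reverse the implication of Proposition \ref{prop1}: rather than extracting numerical constraints from a given hypersymmetric $A$, we will construct $A$ by exhibiting a Weil number realizing the required $B$-balanced slope data, then invoking Honda--Tate theory. Since $\zeta$ is $B$-balanced, write $n$ for the common number of distinct slopes $\lambda_{v,1},\dots,\lambda_{v,n}$ at each place $v\mid p$ of $F$, and $m$ for their common multiplicity. The first step is to build a CM extension $K/F$ of degree $n$ in which every $v\mid p$ splits completely into $n$ distinct primes $w_{v,i}$, with complex conjugation on $K$ pairing the primes $w_{v,i}$ and $w_{\bar v, n+1-i}$ so that the labeling is compatible with the polarization on $V$. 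This is a standard weak-approximation plus Krasner's-lemma construction, essentially the same as Lemma \ref{fields} but with the extra requirement that all archimedean places of $K$ be complex.

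The second step is to choose a Weil $q$-number $\pi\in K$ for some $q=p^a$ with $a$ sufficiently divisible, satisfying $\pi\bar\pi=q$ and $\mathrm{ord}_{w_{v,i}}(\pi)/\mathrm{ord}_{w_{v,i}}(q)=\lambda_{v,i}$ for every $v$ and $i$. The global existence of such a $\pi$ reduces to checking a product formula, which is built into the symmetry of $\zeta$ under $\lambda\leftrightarrow 1-\lambda$ imposed by the polarization. By Honda--Tate, $\pi$ determines an isogeny class of simple abelian varieties $A_0/\mathbb{F}_q$ with Frobenius $\pi$ and Newton polygon $\zeta$; base-changing to $\overline{\mathbb{F}_p}$ gives an abelian variety over $\overline{\mathbb{F}_p}$ with the desired Newton polygon, and Tate's formula expresses the local Brauer invariants of $D:=\mathrm{End}^0(A_0)$ at each place of $K$ purely in terms of $\pi$.

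The third step is to equip $A_0$ (or a suitable power and isotypic summand thereof) with a $B$-linear structure compatible with a polarization. This is where the non-emptiness of $\mathcal{N}$ enters decisively: by Kottwitz's classification of the Newton stratification for PEL data, non-emptiness of $\mathcal{N}$ is equivalent to the statement that the local Brauer invariants of $B$ at each place of $F$ admit a matching with those of $D$ computed above. This matching lets us embed $B$ into $\mathrm{End}^0(A_0^r)$ for an appropriate $r$, pick out a $B$-simple summand, and transport the polarization on $V$ to one on $A$. Once this is done, $K(\pi)\otimes_F F_v\cong\prod_{w\mid v}F_v$ has exactly $n=n_v$ factors by construction, so \cite[Proposition 3.3.1]{zong2008hypersymmetric} immediately gives $B$-hypersymmetry of $A$.

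The main obstacle is the third step, the Brauer-invariant bookkeeping required to equip the Honda--Tate abelian variety with a compatible $B$-action and polarization. This is precisely the content that in Zong's original formulation is packaged as the existence of a ``CM-type partition'' (Zong's Proposition 4.9), and it is exactly here that our simplification, replacing that combinatorial condition by the non-emptiness hypothesis on $\mathcal{N}$, pays off: non-emptiness is, via Kottwitz, the global-to-local assertion that the required matching of invariants exists, which is all that we need.
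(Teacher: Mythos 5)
Your skeleton is the same as the paper's (which reproduces Zong's argument): build an extension $K/F$ of degree $n$ in which every $v\mid p$ splits completely, pick a Weil $p^a$-number $\pi\in K$ with prescribed valuations at the places above $p$ (this is exactly Lemma \ref{lem3}), feed $\pi$ into Honda--Tate, and conclude hypersymmetry from Zong's criterion \cite[Proposition 3.3.1/3.4]{zong2008hypersymmetric}. The divergence, and the genuine gap, is in your Step 3. The paper does not obtain the $B$-action and the polarization by an a posteriori Brauer-invariant matching justified by non-emptiness of $\mathcal{N}$; it applies the $B$-linear polarized Honda--Tate theory directly to $\pi$, and the reason this produces a $B$-simple polarized object is that $\pi$ is manufactured inside a CM $F$-algebra compatible with the involution, built in two stages: a totally real extension $E/F_0$ of degree $N$ with prescribed completions at $p$ and with normal hull of Galois group $S_N$ (Lemma \ref{lem1}, via Ekedahl's Hilbert irreducibility), and then $K=E\otimes_{F_0}F$ in type A, or a totally imaginary quadratic $K/E$ split at $p$ with no proper CM subextension of $F$ (Chai--Oort) in type C. Your plan to ``embed $B$ into $\End^0(A_0^r)$, pick a $B$-simple summand, and transport the polarization'' elides precisely the involution/positivity compatibility and the algebra embedding that this two-stage construction supplies; moreover ``non-emptiness of $\mathcal{N}$ is equivalent to a matching of Brauer invariants of $B$ and $D$'' is not what Kottwitz's classification says (non-emptiness is a statement about $B(G,\mu)$), and what non-emptiness buys in the paper is only the license to drop Zong's CM-type-partition condition -- it does not by itself produce the embedding or the polarization. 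As written, the step you yourself identify as the main obstacle is asserted rather than proved.

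Two secondary points. First, your appeal to Zong's criterion needs $F(\pi)=K$ (you wrote $K(\pi)\otimes_F F_v$, which is all of $K$ and silently assumes this); the paper secures it with Lemma \ref{lem2}, since the $S_N$ normal hull rules out intermediate fields. In your construction it can be rescued without that input -- the $n$ prescribed slopes at each $v$ are pairwise distinct, so the $n$ places of $K$ above $v$ must restrict to distinct places of $F(\pi)$, forcing $[F(\pi):F]\ge n$ -- but this argument has to be made, not waved at ``by construction.'' Second, a one-shot Krasner/weak-approximation construction ``as in Lemma \ref{fields} but with all archimedean places complex'' does not obviously give you control of the action of complex conjugation on the places above $p$ (the pairing $w_{v,i}\leftrightarrow w_{\bar v,\,n+1-i}$ you need for $\pi\bar\pi=q$ and for the polarization); the paper's totally-real-then-quadratic construction is exactly what guarantees that pairing, and in type C it also explains why $n$ must be even. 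So: same route in outline, but Step 1 is underspecified and Step 3 is a genuine gap rather than a simplification.
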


\begin{proof}Case 1: Assume $F\neq F_0$. Since $\zeta$ is $B$-balanced, we may write $N=N_v$ for all places $v$ of $F$ above $p.$ For a place $v$ of $F$ above $p,$ write $v'$ for $v|_{F_0}.$ For each $v,$ define an $(F_0)_{v'}$-algebra of rank $N$ as follows:
\begin{equation}
T_{v'}=\begin{cases}
(F_0)_{v'}^N &\text{if }v\neq v^*\\
(F_0)_{v'}\times F_v^{(N-1)/2} &\text{if }v=v^*,N\text{ odd}\\
F_v^{N/2}&\text{if }v=v^*,N\text{ even}.
\end{cases} 
\end{equation}
By Lemma \ref{lem1}, there is a totally real extension $E/F_0$ of degree $N$ such that its normal hull has Galois group $S_N,$ and $E\otimes_F F_v\cong T_v'$ for all $v|p.$ 

Let $K=E\otimes_{F_0}F.$ Then the normal hull of $K/F$ has Galois group $S_N$ and $K\otimes_F F_v\cong F_v^N$. So for each $v$, we may index the slopes of $\zeta_v$ by the 
places of $w$ above $v$ in such a way that $\lambda_w+\lambda_{\overline{w}}=1.$ By Lemma \ref{lem3}, there is an integer $a\ge 1$ and a
$p^a$-Weil number $\pi$ such that $ord_w(\pi)/ord_w(p^a)=\lambda_w$ for all $w.$ We claim that $K=F(\pi).$ Indeed, if $N=1,$ $K=F=F(\pi).$ If $N>1,$ $\pi\notin F$ since $ord_{w}(\pi)$ 
and $ord_{w'}(\pi)$ are distinct for any $w\neq w'|p$ by construction, so $F\subsetneq F(\pi)\subseteq K$. By construction, $[K:F]=N.$ 
Then Lemma \ref{lem2} implies that $K=F(\pi).$ 

By Honda-Tate theory, there is a $B$-simple abelian variety $A'$ over $\mathbb{F}_{p^a}$corresponding 
to $\pi.$ Without loss of generality, we may assume $A'$ is absolutely $B$-simple. Let $A=A'\otimes_{\mathbb{F}_{p^a}}\overline{\mathbb{F}_p}.$ $A$ is hypersymmetric by Proposition \cite[Proposition 3.4]{zong2008hypersymmetric} and the Newton polygon of $A$ is $\zeta$.

Case 2: Assume $F=F_0.$ Since $\zeta$ is balanced, $N$ is even. By Lemma \ref{lem1}, there exists a totally real extension $E$ of $F$ of degree $N$ such that its normal hull has Galois group $S_N,$ and $E\otimes_F F_v\cong F_v^{N/2}$ for all $v|p.$ By Lemma 5.7 in \cite{chai2006hecke}, there is a totally imaginary quadratic extension $K/E$ such that $K\otimes_E E_v\cong E_v\times E_v$ for every $v|p$ and $K$ contains no proper CM extension of $F.$ Then, similar to the previous case, there is a hypersymmetric $B$-simple abelian variety $A$ whose Newton polygon is $\zeta.$ 
\end{proof}

\begin{lem}\cite[Proposition 7.2.3]{zong2008hypersymmetric}\label{lem1} Let $N$ be a positive integer and $F$ a totally real number field. Let $\Sigma$ be a finite set of non-archimedean places of $F.$ For each $v\in\Sigma$, let $T_v$ be a finite \'etale algebra over $F_v$ of rank $N.$ Then there is a totally real extension $E/F$ of degree $N$ such that its normal hull has Galois group $S_N,$ and $E\otimes_F F_v\cong T_v$ for all $v\in \Sigma.$\end{lem}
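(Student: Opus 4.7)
The plan is to realize $E$ as $F[X]/(f)$ for a monic polynomial $f \in F[X]$ of degree $N$ obtained by weak approximation and Krasner's lemma, and to arrange the Galois group of its splitting field using Hilbert irreducibility with local conditions.

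First, I will encode each local datum by a polynomial. For every $v \in \Sigma$, the \'etale algebra $T_v$ admits a primitive element over $F_v$, so we may write $T_v \cong F_v[X]/(g_v)$ for some separable monic $g_v \in F_v[X]$ of degree $N$. For every real place $w$ of $F$ I will choose a monic $g_w \in F_w[X] \cong \mathbb{R}[X]$ of degree $N$ with $N$ distinct real roots. Let $S$ denote the finite set consisting of $\Sigma$ together with all archimedean places of $F$.

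Next, identify the space of monic polynomials of degree $N$ over $F$ with $F^N$ via coefficient vectors. By weak approximation, the diagonal embedding of $F^N$ is dense in $\prod_{v \in S} F_v^N$, so for any prescribed tolerance there exists a monic $f \in F[X]$ whose coefficients are $v$-adically close to those of $g_v$ at every $v \in S$. Krasner's lemma then guarantees, for sufficiently good approximation, that $F_v[X]/(f) \cong F_v[X]/(g_v) \cong T_v$ for each $v \in \Sigma$, and that $f$ splits into $N$ distinct real linear factors at every real place of $F$. Hence, as soon as $f$ is irreducible over $F$, the quotient $E := F[X]/(f)$ will realize the prescribed local conditions and be totally real.

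To force the Galois group of the splitting field of $f$ over $F$ to equal $S_N$, I would invoke Hilbert irreducibility. The generic monic polynomial $X^N + a_{N-1}X^{N-1}+\cdots+a_0$ over the rational function field $F(a_0,\dots,a_{N-1})$ has Galois group $S_N$, so the locus in $F^N$ of coefficient vectors where the specialized Galois group is a proper subgroup of $S_N$ is a thin set in Serre's sense. The version of Hilbert irreducibility over a number field that accommodates finitely many local conditions (as in Serre, \emph{Topics in Galois Theory}, Ch.\ 3) asserts that such a thin set cannot exhaust all $F$-rational points in a non-empty product of $v$-adic open neighborhoods at a finite set of places. Therefore I can choose $f$ inside the Krasner neighborhoods of the previous paragraph whose splitting field has Galois group exactly $S_N$; in particular $f$ is irreducible, and $E := F[X]/(f)$ is a degree-$N$ totally real extension of $F$ whose normal hull has Galois group $S_N$ and satisfies $E \otimes_F F_v \cong T_v$ for every $v \in \Sigma$.

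The main obstacle will be the combination of open $v$-adic conditions with Hilbert irreducibility in the final step: while this technique is classical, it requires appealing to a slightly refined form of Hilbert irreducibility guaranteeing specializations inside a prescribed adelic neighborhood, rather than merely inside a Zariski-dense set. The remaining ingredients---producing a primitive element of an \'etale $F_v$-algebra, weak approximation on $F^N$, and Krasner's lemma---are routine.
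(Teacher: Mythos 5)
Your proof is correct and follows essentially the same route as the paper's (which reproduces Zong's argument): encode the local data as open adelic conditions on the coefficient vector of a monic degree-$N$ polynomial via weak approximation and Krasner's lemma, and then obtain irreducibility together with full symmetric Galois group from a form of Hilbert irreducibility compatible with finitely many prescribed local conditions. The only difference is cosmetic: the paper applies Ekedahl's effective Hilbert irreducibility theorem to the universal family over the complement of the discriminant locus (checking geometric irreducibility by an Eisenstein argument), whereas you invoke the equivalent thin-set formulation from Serre.
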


\begin{proof}Let $S=Spec\mathcal{O}_F,$ and let $X'=S[a_1,\cdots,a_N]$ be an $S$-affine space. Let $Y'\subseteq S[t]$ be given by $f=t^N+a_1t^{N-1}+\cdots +a_N.$ Let $R=R(f,f')$ be the resultant of $f$ and its derivative. Define $X=X'-\{R=0\}$ and $Y=Y'\times_{X'}X,$ then $Y$ is a \'etale cover of $X$ of rank $N.$  $X$ satisfies the property of weak approximation. The geometric fiber $Y_{\overline{K}}$ is affine of ring $\Gamma(\mathcal{O}_{Y_{\overline{K}}})=(\overline{K}[a_1,\cdots,a_N,t]/(f))_R.$ Write $A=\overline{K}[a_1,\cdots,a_N]$ and $B=\overline{K}[b_1,\cdots,b_N]$ where $b_i=a_i/a_n$ for $1\le i\le n-1$ and $b_n=a_n.$ Then $f$ is Eisenstein in the ring $B[t]$ with respect to the prime $a_n.$ Thus $f$ is irreducible in $B$ and hence also irreducible in $A.$ This implies that $\Gamma(\mathcal{O}_{Y_{\overline{K}}})$ is an integral domain.

Let $M$ denote the set of rational points $x$ of $X$ where $Y_x$ is connected. By Ekedahl's Hilbert irreducibility theorem (Theorem 1.3 in~\cite{ekedahl1988effective}), $M$ satisfies weak approximation. The requirement that $E\otimes_F F_v\cong T_v$ for all $v|p$ imposes a weak approximation condition on the parameters $a_i\in K.$ The condition on the Galois group of the normal hull is also a weak approximation property (see ~\cite{viehmann2013ekedahl}). The proposition follows by slightly modifying the content but not the proof of Ekedahl's theorem.\end{proof}

\begin{lem}\cite[Proposition 7.1.2]{zong2008hypersymmetric}\label{lem2} Let $N$ be a positive integer, $F$ a field, and $E/F$ a separable extension of degree $N.$ If the normal hull of $E/F$ has Galois group $S_N,$ then $E/F$ has no non-trivial sub-extension.\end{lem}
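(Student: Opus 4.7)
The plan is to translate the statement via the Galois correspondence and then invoke a standard fact about point stabilizers in $S_N$.

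Set $L$ equal to the normal hull of $E/F$, $G := \mathrm{Gal}(L/F) \cong S_N$, and $H := \mathrm{Gal}(L/E)$; then $[G:H] = N$. Intermediate fields of $E/F$ correspond via the Galois correspondence to subgroups $H \subseteq K \subseteq G$, so the lemma is equivalent to $H$ being a maximal subgroup of $G \cong S_N$. A key first step is to observe that the hypothesis ``$L$ is the normal hull of $E$'' translates into the condition that the core $\bigcap_{\sigma \in G}\sigma H \sigma^{-1}$ is trivial: the fixed field of this core is the compositum of the $G$-conjugates $\sigma(E)$, which equals $L$ by definition of normal hull.

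From this, the left-translation action of $G$ on the $N$-element set $G/H$ is faithful, giving an embedding $G \hookrightarrow \mathrm{Sym}(G/H) \cong S_N$ that must be an isomorphism by order comparison. Under this isomorphism $H$ is realized as the stabilizer of a point in a faithful transitive action of $S_N$ on $N$ letters, and it suffices to show that any such point stabilizer is a maximal subgroup of $S_N$. I would deduce this from the $2$-transitivity of the standard action of $S_N$ on $\{1,\dots,N\}$: the stabilizer $S_{N-1}$ is maximal because any strict overgroup contains an element moving the fixed point, which together with $S_{N-1}$ generates a $2$-transitive subgroup, forcing it to be all of $S_N$. Any other faithful transitive action of $S_N$ on $N$ letters is equivalent to the standard one up to an automorphism of $S_N$, and automorphisms preserve maximality.

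The main subtlety is the exceptional outer automorphism of $S_6$, which produces an ``exotic'' class of index-$6$ subgroups of $S_6$ isomorphic to $S_5$ but not conjugate to the standard $S_5$. Since these are images of maximal subgroups under an automorphism, they are still maximal, so the argument remains uniform in $N$.
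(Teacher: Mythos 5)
Your reduction to maximality of $H=\mathrm{Gal}(L/E)$ in $G=\mathrm{Gal}(L/F)\cong S_N$, via the observation that the core of $H$ is trivial and the coset action gives an isomorphism $G\xrightarrow{\sim}\mathrm{Sym}(G/H)$ identifying $H$ with a point stabilizer, is exactly the content the paper's one-line proof leaves implicit, so the approach is the same. Two remarks, though. First, the step ``generates a $2$-transitive subgroup, forcing it to be all of $S_N$'' is not correct as an implication: $A_N$ is $2$-transitive for $N\ge 4$, so $2$-transitivity alone does not force the whole symmetric group. What does work is the elementary orbit--stabilizer count: if $K\supsetneq S_{N-1}=\mathrm{Stab}(N)$ then $K$ is transitive and $\mathrm{Stab}_K(N)=K\cap S_{N-1}=S_{N-1}$, so $\abs{K}=N\cdot(N-1)!=N!$ and $K=S_N$. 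You should replace the $2$-transitivity appeal with this. Second, the closing paragraph about the outer automorphism of $S_6$ is a detour you do not need: once you have fixed the isomorphism $G\cong\mathrm{Sym}(G/H)$, the subgroup $H$ \emph{is} the stabilizer of the identity coset, i.e.\ a standard $S_{N-1}$ under that identification, and there is no remaining ambiguity about which index-$N$ subgroup it is. The question of classifying index-$N$ subgroups of an abstract $S_N$ up to automorphism simply does not arise.
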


\begin{proof}$S_{N-1}$ is a maximal subgroup of $S_N.$\end{proof}

\begin{lem}\cite[Proposition 7.1.1]{zong2008hypersymmetric}\label{lem3} Let $K$ be a CM field. Given a set of rational numbers $\{\lambda_w\}_{w\in Spec\mathcal{O}_K,w|p}$ in $[0,1]$ such that $\lambda_{w}+\lambda_{\overline{w}}=1$ for all $w,$ there exists an integer $a\ge 1$ and a $p^a$-Weil number $\pi$ such that $ord_w(\pi)/ord_w(p^a)=\lambda_w$ for all $w.$\end{lem}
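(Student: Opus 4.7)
\medskip

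\noindent\textbf{Proof proposal for Lemma \ref{lem3}.} The plan is to realise $\pi$ inside $K$ itself as a generator of a principal ideal built from the prescribed slopes, and then to correct it by a unit so that $\pi\bar\pi=p^a$ on the nose. Let $K_0$ be the maximal totally real subfield of $K$, and for each place $w$ of $K$ above $p$ write $e_w$ for the ramification index of $w/p$, so that $\mathrm{ord}_w(p)=e_w$ and $e_w=e_{\bar w}$. First choose a positive integer $a$ such that $a\lambda_w e_w\in\mathbb{Z}_{\ge 0}$ for all $w\mid p$ (clear denominators) and such that $h_K\mid a$ (so the class group is killed). Define the fractional ideal
\[
\mathfrak{a}=\prod_{w\mid p} w^{\,a\lambda_w e_w}.
\]
Using $e_w=e_{\bar w}$ and the hypothesis $\lambda_w+\lambda_{\bar w}=1$, a direct computation gives $\mathfrak{a}\cdot\bar{\mathfrak{a}}=\prod_{w\mid p}w^{ae_w}=(p^a)$.

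Next, because $h_K\mid a$ and $\mathfrak{a}$ is visibly a product of $w$-powers whose exponents are divisible by $h_K/\gcd(\ldots)$—more safely, because replacing $a$ by a further multiple of $h_K$ lets us write $\mathfrak{a}$ as an $h_K$-th power of an ideal in the same class—we may assume $\mathfrak{a}=(\alpha)$ is principal. Then $(\alpha\bar\alpha)=(p^a)$, so $\alpha\bar\alpha=u\,p^a$ for some unit $u\in\mathcal{O}_K^\times$. Complex conjugation fixes $\alpha\bar\alpha$, hence $u\in\mathcal{O}_{K_0}^\times$; and for every real embedding $\iota$ of $K_0$, $\iota(u)p^a=|\tilde\iota(\alpha)|^2>0$ for any extension $\tilde\iota$ to $K$, so $u$ is totally positive.

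Now I invoke the standard fact that for a CM extension $K/K_0$ the norm map $N_{K/K_0}:\mathcal{O}_K^\times\to\mathcal{O}_{K_0}^{\times,+}$ has image of finite index in the totally positive unit group: this follows from Dirichlet because both unit groups have the same rank $[K_0:\mathbb{Q}]-1$, and the image contains all squares via $\eta^2\mapsto N_{K/K_0}(\eta)$ on the totally real subgroup. Hence there exist $m\ge 1$ and $\eta\in\mathcal{O}_K^\times$ with $\eta\bar\eta=u^m$. Set
\[
\pi:=\alpha^m/\eta\in K,\qquad a':=am.
\]
Then $\pi\bar\pi=(u\,p^a)^m/(\eta\bar\eta)=p^{a'}$, and $(\pi)=\mathfrak{a}^m/(\eta)=\prod_{w\mid p}w^{\,a'\lambda_w e_w}$, so $\mathrm{ord}_w(\pi)/\mathrm{ord}_w(p^{a'})=\lambda_w$ for all $w\mid p$. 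Finally, for any complex embedding $\iota:K\hookrightarrow\mathbb{C}$, complex conjugation on $\mathbb{C}$ restricts to the intrinsic conjugation on the CM field $K$, so $|\iota(\pi)|^2=\iota(\pi)\overline{\iota(\pi)}=\iota(\pi\bar\pi)=p^{a'}$, showing $\pi$ is a $p^{a'}$-Weil number. After clearing any denominators by multiplying $\pi$ by a suitable integer and adjusting $a'$ (or simply noting $\pi\in\mathcal{O}_K$ already since each $w$-valuation is nonnegative and $\pi$ has no poles away from $p$), we obtain the required Weil number.

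The only delicate step is the unit adjustment in the next-to-last paragraph: everything else is ideal bookkeeping, but passing from the equality of ideals $(\alpha\bar\alpha)=(p^a)$ to an actual equality of elements forces us to absorb the unit $u$. The observation that $u$ is automatically totally positive, combined with the finite-index statement for $N_{K/K_0}$ on unit groups, is what makes this possible; I expect this to be the main obstacle to write out carefully, though both ingredients are standard in the CM theory of Weil numbers.
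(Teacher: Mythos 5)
Your proposal is correct, and it takes a genuinely different route from the paper's (which reproduces Zong's argument). The paper fixes, for each place $v$ of $K_0$ above $p$, an explicit generator $a_v$ (or $a_w$) of the $h$-th power of the corresponding prime and assembles $\pi$ directly as a monomial in these generators with exponents $m_v,n_v$ encoding the slopes; the unit ambiguity that arises when factoring $p^h$ is absorbed by tacking a factor $u^{c/2}$ onto $\pi$. You instead bundle the entire slope datum into one fractional ideal $\mathfrak{a}=\prod_w w^{a\lambda_w e_w}$ satisfying $\mathfrak{a}\bar{\mathfrak{a}}=(p^a)$, force $\mathfrak{a}$ to be principal by passing to an $h_K$-th power, and then correct the resulting unit $u$ with $\alpha\bar\alpha=u\,p^a$ by a norm-of-units argument: $u$ is automatically a totally positive unit of $\mathcal{O}_{K_0}$, and $N_{K/K_0}\colon\mathcal{O}_K^\times\to\mathcal{O}_{K_0}^{\times,+}$ has finite-index image (it contains $(\mathcal{O}_{K_0}^\times)^2$), so some power $u^m$ is a norm and $\pi=\alpha^m/\eta$ works. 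Both proofs use class-number divisibility to achieve principality, but your version isolates the unit obstruction explicitly and handles it with a robust finite-index statement, whereas the paper's treatment of the unit $u^{c/2}$ is more opaque (it is not spelled out how the conjugate of $u^{c/2}$ and of the $a_v$'s at inert places recombine to give $p^{hc}$ on the nose). The one place to tighten your write-up is the principality step: to realize $\mathfrak{a}$ as an $h_K$-th power you should first choose $a_0$ with $a_0\lambda_w e_w\in\mathbb{Z}$ for all $w$ and then set $a=h_K a_0$, so that each exponent $a\lambda_w e_w$ is a multiple of $h_K$; the phrasing in the proposal gestures at this but is a bit loose.
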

\begin{proof}Let $K_0$ be the maximally totally real subfield of $K.$ For any place $v$ of $E$ above $p,$ define $\lambda_v=\min\{\lambda_w:w\in Spec\mathcal{O}_K,w|v\}.$ Let $h$ be the ideal class number of $K.$ If $v$ splits in $K,$ let $a_v\in \mathcal{O}_K$ be a generator of the principal ideal $w^h,$ where $w|v$ is a place of $K;$ if $v$ is inert in $K$, let $a_v\in\mathcal{O}_K$ be a generator of $v^h.$ We have $$ (p\mathcal{O}_K)^h=\left(\prod_{v,v\text{ splits in }K}(w\overline{w})^{e(v|p)}\prod_{v,\text{ inert in }K}v^{e(v|p)}\right)^h=\prod_v(a_w\overline{a_w})^{e(v|p)}\prod_v a_v^{e(v|p)}\cdot u$$ for some unit $u$ of $\mathcal{O}_K.$ 

Now let $c$ be a sufficiently divisible positive integer and write $\lambda_v=m_v/(m_v+n_v)$ with $c=m_v+n_v$,$m_v,n_v\in\mathbb{Z}.$ Define $$\pi=\prod_v(a_w^{m_v}\overline{a_w}^{n_v})^{e(v|p)}\prod_v a_v^{ce(v|p)}/2\cdot u^{c/2}.$$ Then $\pi\overline{\pi}=p^{hc}$ and $\pi$ is the desired $p^{hc}$-Weil number.\end{proof}
\clearpage
\bibliographystyle{alpha}
\bibliography{bib}

\begin{thebibliography}{EMO01}

\bibitem[Ach14]{MR3240772}
Jeffrey~D. Achter.
\newblock Irreducibility of {N}ewton strata in {$ GU(1,n-1)$} {S}himura
  varieties.
\newblock {\em Proc. Amer. Math. Soc. Ser. B}, 1:79--88, 2014.

\bibitem[Bor12]{borel2012linear}
Armand Borel.
\newblock {\em Linear algebraic groups}, volume 126.
\newblock Springer Science \& Business Media, 2012.

\bibitem[BT65]{borel1965groupes}
Armand Borel and Jacques Tits.
\newblock Groupes r{\'e}ductifs.
\newblock {\em Publications Math{\'e}matiques de l'IH{\'E}S}, 27:55--151, 1965.

\bibitem[BW06]{bultel2006congruence}
Oliver B{\"u}ltel and Torsten Wedhorn.
\newblock Congruence relations for shimura varieties associated to some unitary
  groups.
\newblock {\em Journal of the Institute of Mathematics of Jussieu},
  5(2):229--261, 2006.

\bibitem[Cha95]{chai1995every}
Ching-Li Chai.
\newblock Every ordinary symplectic isogeny class in positive characteristic is
  dense in the moduli.
\newblock {\em Inventiones mathematicae}, 121(1):439--479, 1995.

\bibitem[Cha05a]{chai2005canonical}
Ching-Li Chai.
\newblock Canonical coordinates on leaves of p-divisible groups: The two-slope
  case.
\newblock {\em preprint, October}, 2005.

\bibitem[Cha05b]{chai2005hecke}
Ching-Li Chai.
\newblock Hecke orbits on siegel modular varieties.
\newblock In {\em Geometric methods in algebra and number theory}, pages
  71--107. Springer, 2005.

\bibitem[Cha05c]{chai2005monodromy}
Ching-Li Chai.
\newblock Monodromy of hecke-invariant subvarieties.
\newblock {\em Pure and Applied Mathematics Quarterly}, 1(2):291--303, 2005.

\bibitem[Cha06]{chai2006hecke}
Ching-Li Chai.
\newblock Hecke orbits as shimura varieties in positive characteristic.
\newblock {\em Proc. ICM Madrid}, 2:295--312, 2006.

\bibitem[Cha08]{chai2008rigidity}
Ching-Li Chai.
\newblock A rigidity result for p-divisible formal groups.
\newblock {\em Asian Journal of Mathematics}, 12(2):193--202, 2008.

\bibitem[CO06]{chai2006hypersymmetric}
Ching-Li Chai and Frans Oort.
\newblock Hypersymmetric abelian varieties.
\newblock {\em Pure and Applied Mathematics Quarterly}, 2(1):1--27, 2006.

\bibitem[CO11]{chai2011monodromy}
Ching-Li Chai and Frans Oort.
\newblock Monodromy and irreducibility of leaves.
\newblock {\em Annals of mathematics}, 173(3):1359--1396, 2011.

\bibitem[Del71]{deligne1971travaux}
Pierre Deligne.
\newblock Travaux de shimura.
\newblock In {\em S{\'e}minaire Bourbaki vol. 1970/71 Expos{\'e}s 382--399},
  pages 123--165. Springer, 1971.

\bibitem[Eke88]{ekedahl1988effective}
Torsten Ekedahl.
\newblock An effective version of hilbert’s irreducibility theorem.
\newblock {\em S{\'e}minaire de Th{\'e}orie des Nombres, Paris}, 1989(91):1990,
  1988.

\bibitem[EM]{eischen2017p}
Ellen Eischen and Elena Mantovan.
\newblock $p$-adic families of automorphic forms in the $\mu$-ordinary setting.
\newblock {\em arXiv preprint:1710.01864, \emph{2017}, \emph{to appear in the}
  American Journal of Mathematics}.

\bibitem[EMO01]{MR1812812}
S.~J. Edixhoven, B.~J.~J. Moonen, and F.~Oort.
\newblock Open problems in algebraic geometry.
\newblock {\em Bull. Sci. Math.}, 125(1):1--22, 2001.

\bibitem[FC13]{faltings2013degeneration}
Gerd Faltings and Ching-Li Chai.
\newblock {\em Degeneration of abelian varieties}, volume~22.
\newblock Springer Science \& Business Media, 2013.

\bibitem[GHN19]{goertz2019fully}
Ulrich Goertz, Xuhua He, and Sian Nie.
\newblock Fully hodge--newton decomposable shimura varieties.
\newblock {\em Peking Mathematical Journal}, 2(2):99--154, 2019.

\bibitem[Ham19]{hamacher2019product}
Paul Hamacher.
\newblock The product structure of newton strata in the good reduction of
  shimura varieties of hodge type.
\newblock {\em Journal of Algebraic Geometry}, 28(4):721--749, 2019.

\bibitem[Hon19]{hong2019hodge}
Serin Hong.
\newblock On the hodge--newton filtration for p-divisible groups of hodge type.
\newblock {\em Mathematische Zeitschrift}, 291(1-2):473--497, 2019.

\bibitem[Hum11]{humphreys2011conjugacy}
James~E Humphreys.
\newblock {\em Conjugacy classes in semisimple algebraic groups}.
\newblock Number~43. American Mathematical Soc., 2011.

\bibitem[Kas12]{kasprowitz2012monodromy}
Ralf Kasprowitz.
\newblock Monodromy of subvarieties of pel-shimura varieties.
\newblock {\em arXiv preprint arXiv:1209.5891}, 2012.

\bibitem[KM74]{katz1974some}
Nicholas~M Katz and William Messing.
\newblock Some consequences of the riemann hypothesis for varieties over finite
  fields.
\newblock {\em Inventiones mathematicae}, 23(1):73--77, 1974.

\bibitem[Kot85]{kottwitz1985isocrystals}
Robert~E Kottwitz.
\newblock Isocrystals with additional structure.
\newblock {\em Compositio Mathematica}, 56(2):201--220, 1985.

\bibitem[Kot92]{kottwitz1992points}
Robert~E Kottwitz.
\newblock Points on some shimura varieties over finite fields.
\newblock {\em Journal of the American Mathematical Society}, 5(2):373--444,
  1992.

\bibitem[Kot97]{kottwitz1997isocrystals}
Robert~E Kottwitz.
\newblock Isocrystals with additional structure. ii.
\newblock {\em Compositio Mathematica}, 109(3):255--339, 1997.

\bibitem[Man04]{mantovan2004certain}
Elena Mantovan.
\newblock On certain unitary group shimura varieties. vari{\'e}t{\'e}s de
  shimura, espaces de rapoport-zink et correspondances de langlands locales.
\newblock {\em Ast{\'e}risque}, 291:201--331, 2004.

\bibitem[Man05]{mantovan2005cohomology}
Elena Mantovan.
\newblock On the cohomology of certain pel-type shimura varieties.
\newblock {\em Duke Mathematical Journal}, 129(3):573--610, 2005.

\bibitem[Moo98]{moonen1998models}
Ben Moonen.
\newblock Models of shimura varieties in mixed characteristics.
\newblock {\em London Mathematical Society Lecture Note Series}, pages
  267--350, 1998.

\bibitem[Moo04]{moonen2004serre}
Ben Moonen.
\newblock Serre-tate theory for moduli spaces of pel type.
\newblock In {\em Annales scientifiques de l'Ecole normale sup{\'e}rieure},
  volume~37, pages 223--269, 2004.

\bibitem[Mum69]{mumford1969bi}
David Mumford.
\newblock Bi-extensions of formal groups.
\newblock {\em Algebraic geometry}, (307-322), 1969.

\bibitem[Oor74]{oort1974subvarieties}
Frans Oort.
\newblock Subvarieties of moduli spaces.
\newblock {\em Inventiones mathematicae}, 24(2):95--119, 1974.

\bibitem[Oor04]{oort2004foliations}
Frans Oort.
\newblock Foliations in moduli spaces of abelian varieties.
\newblock {\em Journal of the American Mathematical Society}, 17(2):267--296,
  2004.

\bibitem[Poo17]{poonen2017rational}
Bjorn Poonen.
\newblock {\em Rational points on varieties}, volume 186.
\newblock American Mathematical Soc., 2017.

\bibitem[PR92]{platonov1992algebraic}
Vladimir~Petrovich Platonov and Andre{\u\i}~Stepanovich Rapinchuk.
\newblock Algebraic groups and number theory.
\newblock {\em Russian Mathematical Surveys}, 47(2):133, 1992.

\bibitem[RR96]{rapoport1996classification}
Michael Rapoport and Melanie Richartz.
\newblock On the classification and specialization of $ f $-isocrystals with
  additional structure.
\newblock {\em Compositio Mathematica}, 103(2):153--181, 1996.

\bibitem[Shi19]{shin}
Sug~Woo Shin.
\newblock Discrete part of the hecke orbit conjecture.
\newblock International Conference on Arithmetic Geometry, 2019.

\bibitem[SZ16]{shankar2016serre}
Ananth~N Shankar and Rong Zhou.
\newblock Serre-tate theory for shimura varieties of hodge type.
\newblock {\em arXiv preprint arXiv:1612.06456}, 2016.

\bibitem[SZ17]{shen2017stratifications}
Xu~Shen and Chao Zhang.
\newblock Stratifications in good reductions of shimura varieties of abelian
  type.
\newblock {\em arXiv preprint arXiv:1707.00439}, 2017.

\bibitem[Tat66]{tate1966endomorphisms}
John Tate.
\newblock Endomorphisms of abelian varieties over finite fields.
\newblock {\em Inventiones mathematicae}, 2(2):134--144, 1966.

\bibitem[VW13]{viehmann2013ekedahl}
Eva Viehmann and Torsten Wedhorn.
\newblock Ekedahl--oort and newton strata for shimura varieties of pel type.
\newblock {\em Mathematische Annalen}, 356(4):1493--1550, 2013.

\bibitem[YCO]{yustratifying}
Chia-Fu Yu, Ching-Li Chai, and Frans Oort.
\newblock Stratifying lie strata of hilbert modular varieties.

\bibitem[Yu05]{yu2005basic}
Chia-Fu Yu.
\newblock {\em Basic points in the moduli spaces of PEL-type}.
\newblock MPIM-preprint 2005-113, 2005.

\bibitem[Yu06]{yu2004discrete}
Chia-Fu Yu.
\newblock Discrete hecke orbit problem for hilbert-blumenthal varieties.
\newblock {\em NCTS/TPE-Math Technical Report 2006-1}, 20:1--20, 2006.

\bibitem[Zho19]{zhou2019motivic}
Rong Zhou.
\newblock Motivic cohomology of shimura varieties and level raising.
\newblock {\em arXiv preprint arXiv:1901.01954}, 2019.

\bibitem[Zon08]{zong2008hypersymmetric}
Ying Zong.
\newblock On hypersymmetric abelian varieties.
\newblock In {\em Department of Mathematics Mathematisch Instituut University
  of Pennsylvania}, 2008.

\end{thebibliography}

\end{document}